\newcommand{\N}{\ensuremath{\mathbb{N}}}
\newcommand{\R}{\ensuremath{\mathbb{R}}}
\newcommand{\Z}{\ensuremath{\mathbb{Z}}}
\newcommand{\E}{\ensuremath{\mathbb{E}}}
\renewcommand{\P}{\ensuremath{\mathbb{P}}}
\newcommand{\ind}[1]{\ensuremath{\mathbbm{1}_{\left\{#1\right\}}}}
\newcommand{\diff}{\mathop{}\mathopen{}\mathrm{d}}
\newcommand{\cal}[1]{\ensuremath{\mathcal{#1}}}
\newcommand\croc[1]{\left\langle #1\right\rangle}
\newcommand\steq[1]{\stackrel{\text{\rm #1.}}{=}}
\newcommand\Dom[1]{\overline{#1}}
\def\eps{\varepsilon}
\def\cadlag{c\`adl\`ag }
\newtheorem{proposition}{Proposition}
\newtheorem{definition}[proposition]{Definition}
\newtheorem{lemma}[proposition]{Lemma}
\newtheorem{theorem}[proposition]{Theorem}
\title[Averaging Principles for Markovian Models of Plasticity]{Averaging Principles for Markovian Models of Plasticity}
\date{\today}
\author[Ph. Robert]{Philippe Robert}
\email{Philippe.Robert@inria.fr}
\urladdr{http://www-rocq.inria.fr/who/Philippe.Robert}
\address[Ph.~Robert, G.~Vignoud]{INRIA Paris, 2 rue Simone Iff, 75589 Paris Cedex 12, France}
\author[G. Vignoud]{Ga\"etan Vignoud${ }^1$}
\email{Gaetan.Vignoud@inria.fr}
\address[G. Vignoud]{Center for Interdisciplinary Research in Biology (CIRB) - Coll\`ege de France (CNRS UMR 7241, INSERM U1050), 11 Place Marcelin Berthelot, 75005 Paris, France}
\thanks{${}^1$Supported by PhD grant of \'Ecole Normale Sup\'erieure, ENS-PSL}
\begin{document}
\begin{abstract}
  Mathematical models of biological neural networks are associated to a rich and complex class of stochastic processes.
In this paper, we consider a simple {\em plastic} neural network whose {\em connectivity/synaptic strength} $(W(t))$ depends on a set of activity-dependent processes to model {\em synaptic plasticity}, a well-studied mechanism from neuroscience.
A general class of stochastic models has been introduced in~\cite{robert_mathematical_2020} to study the stochastic process $(W(t))$. It has been observed experimentally that its dynamics occur on  much slower timescale than that of the main cellular processes.  The purpose of this paper is to establish limit theorems for the distribution of $(W(t))$ with respect to the fast timescale of neuronal  processes.

The central result of the paper is an averaging principle for the stochastic process $(W(t))$. Mathematically, the key variable is the point process whose jumps occur at the instants of neuronal spikes. A thorough analysis of several of its unbounded additive functionals is achieved in the slow-fast limit. Additionally, technical results on interacting shot-noise processes are developed  and used in the general proof of the averaging principle.
\end{abstract}
\maketitle

 \vspace{-5mm}

\bigskip

\hrule

\vspace{-3mm}

\tableofcontents

\vspace{-1cm}

\hrule

\bigskip
\section{Introduction}
In neuroscience, the encoding of memory is associated with the evolution of neural connectivity in different parts of the brain.
The transmission of neuronal information results from the exchange of a chemical/electrical signal at a synaptic junction, or synapse, where two neighboring neurons interact.
Several experimental studies have shown that the intensity of this connection, or synaptic strength, is not fixed and that its evolution is a necessary component of learning.
Synaptic plasticity refers to the collection of cellular processes that modifies the synaptic weight and has been studied extensively in neurobiology.
More importantly, these mechanisms are modulated in complex ways by the spiking activity of adjacent neurons.

In~\citet{robert_mathematical_2020} we have introduced a general class of mathematical models to represent and study a large class of synaptic plasticity mechanisms.
These models rely on two clearly stated hypotheses: the effect of plasticity is seen on the synaptic strength on {\em long timescales} and it only depends on the {\em relative timing of the spikes}.
This type of plasticity, known as Spike-Timing-Dependent Plasticity (STDP), has been extensively studied in experimental and computational neuroscience, see~\citet{feldman_spike-timing_2012,morrison_phenomenological_2008} for references.
To the best of our knowledge, few rigorous mathematical studies of these models exist, see Section~1.4 of~\cite{robert_mathematical_2020}. As discussed in this reference, measurements show that this system follows {\em slow-fast} dynamics in the sense that the synaptic weight process $(W(t))$ evolves on a slower timescale than neuronal activity associated to the spiking patterns.
The purpose of the current paper is to prove limit theorems of a  scaled version of the corresponding stochastic processes.

\subsection{A Simple Model}
\label{secsec:simplemodel}
We begin by the description of a simplified system to highlight the different components of these stochastic models.
In particular, we will focus on the time evolution of the synaptic strength process $(W(t))$ subject to synaptic plasticity.
It is important to stress here that the synaptic transmission between neural cells is unilateral, in the sense that, the signal goes from an input neuron, called the {\em pre-synaptic neuron}, towards an output neuron, the {\em post-synaptic neuron}.

The stochastic process can be represented by the following variables,
\begin{enumerate}
\item  the membrane potential $X$ of the output cell;
\item the synaptic weight $W$, modeling the strength of the connection from the input neuron to the output neuron.
\end{enumerate}
When the input neuron is spiking, i.e. when it transmits a chemical/electrical signal to the output neuron, {\em a pre-synaptic spike}, the membrane potential $X{=}x$ is updated to $X{=}x{+}w$, where $w$ is the current synaptic weight.

In state $X{=}x$, the output neuron emits a spike at rate $\beta(x)$, {\em a post-synaptic spike},  where $\beta$ is {\em the activation function},
generally taken as a non-decreasing function of the membrane potential.
Accordingly, a pre-synaptic spike and the associated jump in membrane potential leads to and increase of the spiking probability, i.e the post-synaptic neuron tends to spike ``just after'' receiving a pre-synaptic input.
As explained in~\citet{robert_mathematical_2020}, STDP synaptic mechanisms depend, in a complex way, on past spiking times of both adjacent neurons.

More formally, in our simple example, the time evolution is described by the following set of Stochastic Differential Equations (SDEs),
\begin{equation}\label{SimpEqW}
\begin{cases}
\diff X(t) &\displaystyle = {-}X(t)\diff t+W(t)\mathcal{N}_{\lambda}(\diff t),\\
\diff Z(t) &\displaystyle =   {-}\gamma Z(t) \diff t+B_1\mathcal{N}_{\lambda}(\diff t)+B_2\mathcal{N}_{\beta,X}(\diff t),\\
\diff W(t) &\displaystyle = Z(t{-})\mathcal{N}_{\beta,X}(\diff t),
\end{cases}
\end{equation}
where $h(t{-})$ is the left-limit of the function $h$ at $t{>}0$ and, for $i{=}\{1,2\}$, $B_i{\in}\R_+$.
We discuss briefly the random variables involved.
\begin{enumerate}
\item $\mathcal{N}_{\lambda}$ and $\mathcal{N}_{\beta,X}$.\\
These random variables are point processes representing the sequences of spike times of the pre- and post-synaptic neuron.
An instant of ${\cal N}_\lambda(\diff t)$ is associated to a pre-synaptic spike and, as a result leads the increment of the post-synaptic membrane potential $X(t-)$ by $W(t)$.
In the present work, ${\cal N}_\lambda$ is assumed to be a Poisson process with rate $\lambda$.

The point process $\mathcal{N}_{\beta,X}$ is for the sequence of instants of  the post-synaptic spike train.
It is a non-homogeneous Poisson process with (random) intensity function $(\beta(X(t{-})))$. It is formally defined by Relation~\eqref{Nbeta} below. 
\item $(Z(t))$.\\
The process $(Z(t))$ encodes the past spiking activity of both neurons through an additive functional of $\mathcal{N}_{\lambda}$ and $\mathcal{N}_{\beta,X}$ with an exponential decay factor $\gamma{>}0$.
Lemma~\ref{lemma:expofilter} below gives the representation, for $t{\ge}0$,
\[
Z(t)=Z(0){+}B_1\int_0^t e^{-\gamma(t-s)}\mathcal{N}_{\lambda}(\diff s){+}B_2\int_0^t e^{-\gamma(t-s)}\mathcal{N}_{\beta,X}(\diff s).
\]
See~\citet{robert_mathematical_2020} for more details.

\item $(W(t))$.\\
The synaptic weight $W$ is increased at each jump of ${\cal N}_{\beta,X}$ by the value of $(Z(t))$.
\end{enumerate}

From a biological point of view, the relevant process is $(W(t))$, because it describes the time evolution of the synaptic strength, i.e. the intensity of transmission between two connected neurons.
Even if it has been extensively studied both in experimental neuroscience and statistical physics, there are few rigorous mathematical results on the dynamical evolution of $W$.

From a mathematical perspective, the variables $(X(t),Z(t),W(t))$, solutions of SDE~\eqref{SimpEqW} are central to the model.
Still, the point process ${\cal N}_{\beta,X}$ is the key component of the system  since it drives the time evolution of $(Z(t))$ and $(W(t))$ and, consequently, of $(X(t))$.
Most mathematical difficulties resulting from this model are related to asymptotic estimates of linear functionals of ${\cal N}_{\beta,X}$.
It can also be seen as an extension of Hawkes point processes, see Section~\ref{HawSunSec} below.

The scaling approach of this paper follows from the fact that the model can be expressed as a {\em slow-fast} system.
In fact, neuronal processes, associated to the point processes ${\cal N}_{\lambda}$ and ${\cal N}_{\beta,X}$, occur on a timescale which is much faster than the timescale of the evolution of $(W(t))$.
See~\citet{kempter_hebbian_1999} for example and Sections~1 and~4.1 of~\cite{robert_mathematical_2020} for a discussion on this topic.

Using this scaling for the simple model, the SDE~\eqref{SimpEqW} becomes, for $\eps{>}0$,
\begin{equation}
\label{eq:scaledsimple}
\begin{cases}
\diff X_\eps(t) &\displaystyle = {-}X_\eps(t)\diff t/\eps+W_\eps(t)\mathcal{N}_{\lambda/\eps}(\diff t),\\
\diff Z_\eps(t) &\displaystyle =   {-}\gamma Z_\eps(t) \diff t/\eps+B_1\mathcal{N}_{\lambda/\eps}(\diff t)+B_2\mathcal{N}_{\beta/\eps,X_\eps}(\diff t),\\
\diff W_\eps(t) &\displaystyle = Z_\eps(s{-})\eps\mathcal{N}_{\beta/\eps,X_\eps}(\diff s)\diff t.
\end{cases}
\end{equation}
As it can be seen, the variables $(X_\eps(t))$ and $(Z_\eps(t))$ evolve on the timescale $t{\mapsto}t/\eps$, with $\eps$ small, they are {\em fast variables}.
Conversely, the increments of the variable $W$ are scaled with the parameter $\eps$,  the integration of the differential element $\eps\mathcal{N}_{\beta/\eps,X_\eps}(\diff s)$ on a bounded time-interval is $O(1)$.  For this reason, $(W_\eps(t))$ is described as a {\em slow process}.
This is a classical assumption in the corresponding models of statistical physics. Approximations of $(W_\eps(t))$ when $\eps$ is small are discussed and investigated with ad-hoc methods, see~\citet{kempter_hebbian_1999} for example.

\subsection{Averaging Principles}\label{AVGSubSec}
The main goal of the present paper is to establish a limit result, or averaging principle, for $(W_\eps(t))$ when $\eps$ goes to $0$ for a general class of synaptic plasticity models.

In particular, the averaging principle for the simple model can be expressed as follows.
Under appropriate conditions, if $\Pi_w$ denotes the equilibrium distribution of the variables $(X^w,Z^w)$ when the process $(W(t))$ is constant and equal to $w$, then there exists $S_0{\in}(0,{+}\infty]$, such that the processes $(W_\eps(t),0{\le}t{<}S_0)$  is tight for the convergence in distribution when $\eps$ goes to $0$, and any limiting point $(w(t),0{\le}t{<}S_0)$ satisfies the  following  integral equation,
\begin{equation}\label{ODESimp}
  w(t)= w(0){+} \int_0^t \int_{\R_+^2}z\beta(x)\Pi_{w(s)}(\diff x,\diff z)\,\diff s, \quad t{\in}[0,S_0).
\end{equation}
See Chapter~7 of~\citet{freidlin_random_1998} and~\citet{papanicolalou_martingale_1977} for an introduction to averaging principles in the case of diffusions and~\citet{karatzas_averaging_1992} for jump processes.

\subsection*{Remarks} We quickly discuss several aspects of these results. 
\begin{enumerate}
\item {\sc Tightness Properties of Additive Functionals.}\\
The technical problems of this paper essentially lie in the tightness properties of linear functionals of the fast process occupation measures.
Several technical results are developed to alleviate these difficulties: in Section~\ref{ShotSec} of Appendix for properties of interacting shot-noise processes, in Section~\ref{OccSec} on occupation measures for bounded synaptic weights and, finally in Section~\ref{section:proof} for the final tightness results.
The scarcity of rigorous results in the literature may be explained by such obstacles. The reference~\citet{helson_new_2018} for the time-elapsed model is one of the rare examples of rigorous analysis, but it mainly considers bounded updates.

The main difficulty originates, as it could be expected, from the scaled point process $\eps\mathcal{N}_{\beta/\eps,X_\eps}(\diff s)$ associated to post-synaptic spikes and, more precisely, from the tightness of
\[
\left(\int_0^t Z_\eps(s)\eps\mathcal{N}_{\beta/\eps,X_\eps}(\diff s)\diff s\right).
\]
If the model was limited to bounded functionals of the occupation measure, the proof of this tightness property would be significantly simpler, much in the spirit of~\citet{karatzas_averaging_1992}.

\item {\sc Uniqueness}.\\
  If Relation~\eqref{ODESimp}  has a unique solution for a given initial state, a result for the convergence in distribution of $(W_\eps(t))$  when $\eps$ goes to $0$ is therefore obtained. Uniqueness holds if the integrand, with respect to $s$, of the right-hand side of Relation~\eqref{ODESimp} is locally Lipschitz as a function of $w(s)$.
  Regularity properties of the invariant distribution $\Pi_w$ as a function of $w$ need to be verified and this is not a concern in the case of our simple model. We will consider in fact much more general models  for $(X^w,Z^w)$, when $Z^w$ a multi-dimensional process in particular. We did not try to state a set of conditions that can ensure the desired regularity properties  of the corresponding $\Pi_w$. The proof of the Harris ergodicity of $(X^w,Z^w)$ for a fixed $w$ of Section~\ref{section:invariant} of Appendix, though not really difficult, is already cumbersome.

The proof of Proposition~\ref{LemLipSimp}  for the simple model gives an example of how this property can be established. In a general context, this kind of result is generally proved via  the use of a common Lyapounov function for $(X^w,Z^w)$ for all $w$ is in the neighborhood of some $w_0{>}0$.  See~\citet{Khasminski}, for example. Uniqueness results have already been obtained in Sections~5 and~6 of~\citet{robert_mathematical_2020} for several important practical cases. In Section~\ref{sec:simplemodel} we investigate these questions for our simple model.

\medskip

\item {\sc Blow-up Phenomenon.}\\
  The convergence properties are stated on {\em a fixed time interval} $[0,S_0)$.
  For some models, the variable $S_0$ cannot be taken as $+\infty$, see the example of Section~\ref{section:theorems} and Proposition~\ref{ODESimpAvProp}.
  More specifically, the limit in distribution of  $(W_\eps(t))$ as $\eps$ goes to $0$ blows-up, i.e. hits infinity in finite time.
  An analogue property holds for some mathematical models of large populations of neural cells with fixed synaptic strengths.
  See~\citet{caceres2011analysis} for example, where the blow-up phenomenon is the result of mutually exciting dynamics of populations of neural cells. In our case, the strengthening of the connection may grow without bounds when the activation function $\beta$ has a linear growth. See Proposition~\ref{ODESimpAvProp} of Section~\ref{sec:simplemodel}. 
\end{enumerate}

\subsection{A Brief Description of the General Model}\label{QDGM}
We shortly describe the general setting of the models investigated in this paper.
See Section~\ref{ModSec} for a detailed presentation.
\begin{enumerate}
\item The process $(X(t))$.\\
The output neuron follows leaky-integrate dynamics as in Equation~\eqref{SimpEqW}.
In addition, the influence of a post-synaptic spike ${\cal N}_{\beta,X}$ at time $t{>}0$ is represented as a drop $-g(X(t{-}))$ of the post-synaptic potential after the spike;
\item The process $(Z(t)){=}(Z_i(t))$ is a multi-dimensional process satisfying the same type of ODE as in our simple case but with the constants $B_1$ and $B_2$ being replaced by functions $k_1$ and $k_2$ of $Z(t)$. A constant drift term $k_0$ is also added to the dynamics. The $i$th component $(Z_i(t))$ satisfies an SDE of the type
  \[
  \diff Z_i(t) \displaystyle =   ({-}\gamma_i Z_i(t){+}k_{0,i})\diff t{+}k_{1,i}(Z(t{-}))\mathcal{N}_{\lambda}(\diff t){+}k_{2,i}(Z(t{-}))\mathcal{N}_{\beta,X}(\diff t).
  \]
\item Evolution of $(W(t))$. \\
  The dependence is more sophisticated since it involves two additional processes $(\Omega_p,\Omega_d)$. The first one, $(\Omega_p(t))$ integrates, with an exponential decay $\alpha$ a linear combination of the processes leading to potentiation, i.e. to increase the synaptic weight. The process  $(\Omega_d(t))$ has a similar role for depression, i.e. to decrease the synaptic weight.
  They are expressed as, for $a{\in}\{p,d\}$,
\[
    \Omega_a(t){=} \hspace{-1mm}\int_0^t\hspace{-2mm} e^{-\alpha(t-s)}\left[  n_{0,a}(Z(s))\diff s{+}n_{1,a}(Z(s{-}))\mathcal{N}_{\lambda}(\diff s){+}n_{2,a}(Z(s{-}))\mathcal{N}_{\beta,X}(\diff s)\right].
\]
The changes of $(Z(t))$ are thus integrated ``smoothly'' in the evolution of $(W(t))$ in agreement with measurements of the biological literature. See Appendix~A of~\cite{robert_mathematical_2020}.
Finally, $(W(t)){\in}K_W$ verifies
\[
\diff W(t)= M\left(\Omega_p(t),\Omega_d(t),W(t)\right)\diff t,
\]
where $K_W{\subset \R}$ represents the synaptic weight domain, and the functional $M$ is such that $W(t)$ stays in $K_W$ for all $t{\geq}0$.
\end{enumerate}

It has been shown in Section~3 of~\cite{robert_mathematical_2020} that these models encompass most classical STDP models from statistical physics.
The multiple coordinates of $(Z(t))$ can be interpreted as the concentrations of chemical components implicated in plasticity, that are created/suppressed by spiking mechanisms

\subsection{Links to Non-Linear Hawkes Point Processes}\label{HawSunSec}
The spiking instants of a neuron can also be seen as a self-exciting point process since its instantaneous jump rate depends on  past instants of its jumps. 
More formally, this corresponds to the class of Hawkes point process ${\cal M}$ on $\R_+$ associated to a function $\phi$ and exponential decay $\gamma$.
More precisely, it is a non-homogeneous Poisson point process ${\cal M}$ whose intensity function $(\lambda(t))$ is given by
\[
\left(\lambda(t)\right)=\left(\phi\left(\int_0^t e^{-\gamma(t-s)}{\cal M}(\diff s)\right)\right).
\]
These processes have received of lot of attention from the mathematical literature, for some time now.
They are mainly used in models of mathematical finance, but also in neurosciences.
See the pioneering works of~\citet{HawkesOak} and~\citet{Kerstan}.

A special case of the first equation of Relation~\eqref{SimpEqW} is, for $w{\ge}0$,
\[
\diff X(t)  =   {-}X(t) \diff t{+}w{\cal N}_{\lambda}(\diff t){-}\mathcal{N}_{\beta,X}(\diff t),
\]
if $X(0){=}0$,  Lemma~\ref{lemma:expofilter} below  gives the representation
\[
X(t)=w\int_0^t e^{-(t-s)}\mathcal{N}_{\lambda}(\diff s){-} \int_0^t e^{-(t-s)} \mathcal{N}_{\beta,X}(\diff s), \forall t{\ge}0.
\]
Hence, ${\cal N}_{\beta,X}$ can be seen as an extended Hawkes process with activation function $\beta$ and exponential decay $1$.

In the system of equations~\eqref{SimpEqW}, $(X(t))$ and $(Z(t))$ can also be represented as a multi-dimensional Hawkes processes. See~\citet{hawkes1971spectra}.
However in our model, an important feature not present in studies of Hawkes processes has been added: the synaptic weight process $(W(t))$ is not constant.

\subsection{Organization of the Paper}\label{subsec:organisation}
In Section~\ref{ModSec}, the main processes and definitions are introduced as well as assumptions to prove an averaging principle.  The scaling is presented in Section~\ref{ScaledSec} and the averaging principle in Section~\ref{section:theorems}. In this section the general strategy for the proof of the main theorem is detailed.  Section~\ref{section:coupling} investigates monotonicity properties and a coupling result, crucial in the proof of tightness, is proved. Section~\ref{OccSec} is devoted to the tightness of occupation measures of fast processes when the process $(W_\eps(t))$ is assumed to be bounded.  Finally, the proof of the main theorem is completed in Section~\ref{section:proof}.
In Section~\ref{ShotSec} of Appendix, several useful tightness results are proved for interacting shot-noise processes.  The ergodicity properties of fast processes are analyzed in Section~\ref{section:invariant} of Appendix. Section~\ref{app:alternative} of the Appendix discusses averaging principles for related discrete models of synaptic plasticity.
\section{The Stochastic Model}\label{ModSec}
We  define the stochastic model associated to Markovian plasticity kernels introduced in~\citet{robert_mathematical_2020}.
The probabilistic setting of these models along with formal definitions are detailed in the following section.

\subsection{Definitions and Notations}\label{defNotSec}
The space of Borelian subsets of a topological space $H$, is denoted as ${\cal B}(H)$.
Let $(\Omega,{\cal F}, ({\cal F_t}), \mathbb{P})$ be a filtered probability space.
We assume that two independent Poisson processes, ${\cal P}_1$ and ${\cal P}_2$  on $\R_+^2$, with intensity $\diff x{\times}\diff y$ are defined on $(\Omega,{\cal F}, ({\cal F_t}), \mathbb{P})$. 
See \citet{kingman_poisson_1992} for example.
For ${\cal P}{\in}\{{\cal P}_1,{\cal P}_2\}$ and $A$, $B{\in}{\cal B}(\R_+)$ and  a Borelian function $f$  on $\R_+$,
\[
  {\cal P}\left(A{\times}B\right){\steq{def}} \int_{A{\times}B} {\cal P}(\diff x,\diff y),
  \int_{\R_+} f(y){\cal P}(A,\diff y){\steq{def}}   \int_{A{\times}\R_+} f(y){\cal P}(\diff x,\diff y).
\]
For $t{\ge}0$, the $\sigma$-field ${\cal F}_t$ of the filtration $({\cal F}_t)_{t{\ge}0}$ is assumed to contain all events before time $t$ for both point processes, i.e.
\begin{equation}\label{Ft}
  \sigma\left< \rule{0mm}{4mm}\mathcal{P}_1\left(\rule{0mm}{3mm}A{\times}(s,t]\right), \mathcal{P}_2\left(\rule{0mm}{3mm} A{\times}(s,t]\right), A {\in}\mathcal{B}\left( \mathbb{R}_+ \right), s{\leq}t \right>\subset {\cal F}_t.
\end{equation}
A stochastic process $(H(t))$ is {\em adapted} if, for all $t{\ge}0$, $H(t)$ is ${\cal F}_t$-measurable.
It is a {\em \cadlag process} if, almost surely, it is right continuous and has a left limit at every point $t{>}0$, $H(t{-})$ denotes the left limit of $(H(t))$ at $t$.  The Skorohod space of \cadlag functions from $[0,T]$ to $S$ is denoted as $\mathcal{D}([0,T],S)$. See~\citet{billingsley_convergence_1999} and~\citet{ethier_markov_2009}.
The mention of adapted stochastic processes, or of martingale, will be implicitly associated to the filtration $({\cal F}_t)_{t{\ge}0}$.

The set of real continuous bounded functions on the metric space $\mathcal{S}{\subset}\R^d$ is denoted by $\mathcal{C}_b(\mathcal{S})$.
$\mathcal{C}_b^k(\mathcal{S}){\subset}\mathcal{C}_b(\mathcal{S})$ is the set of bounded, $k$-differentiable functions on $\mathcal{S}$ with respect to each coordinate, with all derivatives bounded and continuous.
The multi-dimensional extensions to $\mathcal{S}$  are denoted by $\mathcal{C}_b^k(\mathcal{S},\mathcal{S})$.

\bigskip
\noindent
{\sc Dynamics of a Neuron as an Inhomogeneous Poisson Process}.
We introduce an important point process  ${\cal N}_{\phi,H}$, that represents the spike times of a neuron whose membrane potential process is $(H(t))$, with activation function $\phi$.
$\phi$ is a non-negative \cadlag function on $\R$,  it is defined by
\begin{equation}\label{Nbeta}
  \int_{\R_+}f(u){\cal N}_{\phi,H}(\diff u)\steq{def}\int_{\R_+}f(u){\cal P}_2\left(\rule{0mm}{4mm}\left(\rule{0mm}{3mm}0,\phi(H(u{-}))\right],\diff u\right),
\end{equation}
for any Borelian function $f$ on $\R_+$.

\subsection{The Plasticity Process}
\begin{definition}[Time Evolution]\label{DefPar}
The \cadlag process
  \[
(U(t)){=}(X(t),Z(t),\Omega_p(t),\Omega_d(t),W(t))\in \R{\times}\R_+^{\ell}{\times}\R_+^2{\times}K_W,
  \]
  is solution of the following Stochastic Differential Equations (SDE), starting from some initial state $U(0){=}U_0{=}(x_0,z_0,\omega_{0,p},\omega_{0,d},w_0)$. 
\begin{equation}\label{eq:markov}
\begin{cases}
    \diff X(t) &\displaystyle = {-}X(t)\diff t{+}W(t)\mathcal{N}_{\lambda}(\diff t){-}g\left(X(t{-})\right)\mathcal{N}_{\beta,X}\left(\diff t\right),\\
    \diff Z(t) &\displaystyle =   ({-}\gamma{\odot}Z(t){+}k_0)\diff t{+}k_1(Z(t{-}))\mathcal{N}_{\lambda}(\diff t){+}k_2(Z(t{-}))\mathcal{N}_{\beta,X}(\diff t),\\
  \diff \Omega_a(t)&\displaystyle = {-}\alpha\Omega_a(t)\diff t{+}n_{a,0}(Z(t))\diff t\\&\hspace{1cm}{+}
   n_{a,1}(Z(t{-}))\mathcal{N}_{\lambda}(\diff t){+}n_{a,2}(Z(t{-}))\mathcal{N}_{\beta,X}(\diff t),\quad a{\in}\{p,d\},\\
    \diff W(t) &\displaystyle = M\left(\Omega_p(t),\Omega_d(t),W(t)\right)\diff t,
\end{cases}\end{equation}
with the notation $a{\odot}b{=}(a_kb_k)$ for the {\em Hadamard} product, for $a{=}(a_k)$, $b{=}(b_k){\in}\R_+^\ell$.
\end{definition}
Recall that, see Section~\ref{QDGM},  $K_W$ is an interval of $\R$ which contains the range of values of synaptic weight.

We now state the assumptions used for the proof of Theorem~\ref{theorem:homog}.

\subsubsection{Pre-Synaptic Spikes}\label{SecPre} The instants of pre-synaptic spikes are given by a Poisson process with rate $\lambda{>}0$,
\begin{equation}\label{Nlambda}
    {\cal N}_\lambda(\diff t) \steq{def} {\cal P}_1\left((0,\lambda],\diff t\right),
\end{equation}
where ${\cal P}_1$ is the Poisson point process introduced in Section~\ref{defNotSec}. 

\subsubsection{Post-Synaptic Spikes}\label{SecPost} 
When the post-synaptic membrane potential is $x$, a post-synaptic spike occurs at rate $\beta(x)$ and leads to a decrease of the membrane potential $x{-}g(x)$.
\begin{itemize}
    \item It is assumed that $\beta$ is a non-negative, continuous function on $\R$ and that $\beta(x){=}0$ for $x{\le}{-}c_\beta{\le}0$.
  Additionally, there exists  a constant  $C_\beta{\geq}0$ such that
\begin{equation}\label{Condbeta}
  \beta(x) {\leq} C_{\beta}(1{+}|x|),\quad \forall x{\in}\R.
\end{equation}
\item The function $g$ is continuous on $\R$ and $0{\le}g(x){\le} \max(c_g,x)$ holds for all $x{\in}\R$, for some $c_g{\ge}0$.
  \end{itemize}
The instants of post-synaptic spikes are represented by the point process $\mathcal{N}_{\beta,X}$.
Recall that
\[
{\cal N}_{\beta,X}(\diff t){=}{\cal P}_2\left(\rule{0mm}{4mm}\left(\rule{0mm}{3mm}0,\beta(X(t{-}))\right],\diff t\right).
\]

\subsubsection{The Process $(Z(t))$}
The process $(Z(t))$ is a multi-di\-men\-sional process, with values in $\R_+^\ell$, it is driven by the general spiking activity of the system, and therefore, depends only on the point processes $\mathcal{N}_{\lambda}$ and $\mathcal{N}_{\beta, X}$. For some models it describes the time evolution of chemical components within the synapse.
\label{SecZ}
$(Z(t))$ is a \cadlag function with values in $\R_+^\ell$, solution of the stochastic differential equation
\begin{equation}\label{KerZ}
    \diff Z(t) =({-}\gamma\odot Z(t){+}k_0)\diff t +k_1(Z(t{-})){\cal N}_{\lambda}(\diff t)+k_2(Z(t{-})){\cal N}_{\beta,X},
\end{equation}
$a{\odot}b{=}(a_i{\times}b_i)$ if $a{=}(a_i)$ and $b{=}(b_i)$ in $\R_+^\ell$, $k_0{\in}\R_+^\ell$ is a constant, $k_{1}$ and $k_{2}$ are measurable functions from $\R_+^\ell$ to $\R^\ell$.
Furthermore, the $(k_i)$ are chosen such that $(z(t))$ has values in $\R_+^{\ell}$ whenever $z(0){\in}\R_+^{\ell}$.

It is assumed that
\begin{enumerate}
\item All coordinates of the vector $\gamma$ are positive;
\item The non-negative functions $k_{i}$, $i{=}\{0,1,2\}$, are ${\cal C}^1_b(\R_+^\ell, \R_+^\ell)$ and bounded  by $C_k{\geq}0$.
\end{enumerate}

\subsubsection{The Process $(\Omega_p(t),\Omega_d(t))$}\label{OmCond}

These variables, in $\R_+^2$ encode, with an exponential decay, the total memory of instantaneous plasticity processes represented by the process $(Z(t))$.
  The process $(\Omega_p(t))$ is driving  potentiation of the synapse, i.e. the derivative of synaptic weight is an increasing function of this variable.
  In an analogous way, $(\Omega_d(t))$ is associated to depression, i.e. the derivative of synaptic weight is a decreasing function of this variable.
The system of equations for $(\Omega_p(t),\Omega_d(t))$ is a set of two one-dimensional SDEs, for $a{\in}\{p,d\}$,
\begin{multline*}
  \diff \Omega_a(t)\displaystyle = {-}\alpha\Omega_a(t)\diff t{+}n_{a,0}(Z(t))\diff t
       \\ {+}  n_{a,1}(Z(t{-}))\mathcal{N}_{\lambda}(\diff t){+}n_{a,2}(Z(t{-}))\mathcal{N}_{\beta,X}(\diff t).
\end{multline*}
        We suppose that there exists a constant $C_n$ such that, for $j{\in}\{0,1,2\}$, $a{\in}\{p,d\}$, $n_{a,j}$ verifies,
\begin{equation}\label{Condn}
  n_{a,j}(z){\le} C_n(1{+}\|z\|),
\end{equation}
where, for $z{\in}\R_+^\ell$,  $\|z\|{=}z_1{+}\cdots{+}z_\ell$.

For any $w{\in}K_W$ and   $a{\in}\{p,d\}$, the discontinuity points of
  \[
  (x,z){\mapsto}(n_{a,0}(z),n_{a,1}(z), \beta(x)n_{a,2}(z))
  \]
 are negligible for the invariant probability distribution $\Pi_w$ of $(X(t),Z(t))$ when $(W(t))$ is constant equal to $w$. See Section~\ref{section:invariant}.

When $(W(t))$ is constant, the process $(X(t),Z(t))$ can be seen as generalized shot-noise processes, see Section~\ref{ShotSec}.
It is well-known that the invariant distribution of the classical, one-dimensional, shot-noise process  is absolutely continuous w.r.t Lebesgue's measure.  See examples of Sections~5 and~6 of~\citet{robert_mathematical_2020} and the reference~\citet{Beznea} for criteria in this domain.

\subsubsection{Dynamics of Synaptic Weight} \label{MCond}
The functional $M$ drives the dynamics of the synaptic weight, the corresponding equation is given by Relation~\eqref{eq:markov}.
In particular, for any $w{\in}K_W$ and any \cadlag piecewise-continuous functions $h_1$ and $h_2$ on $\R_+$, the ODE
\begin{equation}\label{ODEM}
  \frac{\diff w}{\diff t}(t){=}M(h_1(t),h_2(t),w(t)) \text{ with } w(0){=}w,
\end{equation}
for all points of continuity of $h_1$ and $h_2$, has a unique continuous solution denoted by $(S[h_1,h_2](w,t))$ in $K_W$.
We assume that $M$ can be decomposed as
$M(\omega_p,\omega_d,w){=}M_p(\omega_p,w){-}M_d(\omega_d,w) - \delta w$,
where $M_{a}(\omega_a,w)$ is non-negative continuous function, non-decreasing on the first coordinate for a fixed $w{\in}K_w$, and,
\[
   M_a(\omega_a,w)\leq C_M(1+\omega_a),
\]
for all $w{\in}K_W$, for $a{\in}\{p,d\}$.

\subsection{Discrete Models of Synaptic Plasticity}
A model of plasticity with discrete state space has been introduced in~\citet{robert_mathematical_2020}.
The proof of the associated averaging principles for the continuous case can be adapted to such systems.
Relevant parts of the proof are briefly presented in Section~\ref{app:alternative} of the Appendix.
\section{The Scaled Process}\label{ScaledSec}
The SDEs of Definition~\ref{DefPar} are difficult to study without any additional hypothesis.
Existence and uniqueness of solutions to this system are guaranteed by Proposition~1 of~\citet{robert_mathematical_2020}.
It can be seen as an intricate fixed point equation for the processes $(X(t),W(t))$ involving functionals of these processes like ${\cal N}_{\beta,X}$ defined by Relation~\eqref{Nbeta}.

As explained in Section~4 of~\cite{robert_mathematical_2020}, $(X(t),Z(t))$ are associated to fast dynamics at the cellular level while the process $(W(t))$ evolves on a much longer timescale.
For this reason, a scaling parameter $\eps{>}0$ is introduced so that $(X(t),Z(t))$ evolves on the timescale $t{\mapsto}t/\eps$.
More precisely,
\begin{itemize}
\item Fast Processes: $(X(t))$ and $(Z(t))$.\\
The point processes associated to pre- and post-synaptic spikes driving the time evolution of $(X(t))$ and $(Z(t))$ are sped-up by a factor $1/\eps$: ${\cal N}_\lambda{\to}{\cal N}_{\lambda/\eps}$ and ${\cal N}_{\beta,X}{\to}{\cal N}_{\beta/\eps,X}$.
The deterministic part of the evolution is changed accordingly $\diff t{\to}\diff t/\eps$.
\item Slow Processes: $(W(t))$ and $(\Omega_p(t),\Omega_d(t))$.\\
Update of $(\Omega_p(t))$ and $(\Omega_d(t))$ due to fast jump processes have a small amplitude,  ${\cal N}_\lambda{\to}\eps{\cal N}_{\lambda/\eps}$ and ${\cal N}_{\beta,X}{\to}\eps{\cal N}_{\beta/\eps,X}$.
\end{itemize}
Formally, we define the scaled process $(U_\eps(t)){=}(X_\eps(t),Z_\eps(t),\Omega_{\eps,p}(t),\Omega_{\eps,d}(t),W_\eps(t))$, the evolution equations of Definition~\ref{DefPar} become
\begin{align}
    \diff & X_\eps(t) \displaystyle = {-}X_\eps(t)\diff t/\eps{+}W_\eps(t)\mathcal{N}_{\lambda/\eps}(\diff t){-}g\left(X_\eps(t{-})\right)\mathcal{N}_{\beta/\eps,X_\eps}\left(\diff t\right),\label{eq:markX}\\
    \diff & Z_\eps(t) \displaystyle =    \left({-}\gamma{\odot}Z_\eps(t){+}k_0\right)\diff t/\eps{+}k_1(Z_\eps(t{-}))\mathcal{N}_{\lambda/\eps}(\diff t)\label{eq:markZ} \\&\hspace{6cm}+k_2(Z_\eps(t{-}))\mathcal{N}_{\beta/\eps,X_\eps}(\diff t),\notag\\
    \diff &\Omega_{\eps,a}(t)\displaystyle = {-}\alpha\Omega_{\eps,a}(t)\diff t{+}n_{a,0}(Z_{\eps}(t))\diff t\label{eq:markOm}\\&\hspace{1cm}+
    \eps \left(n_{a,1}(Z_{\eps}(t{-}))\mathcal{N}_{\lambda/\eps}(\diff t){+}n_{a,2}(Z_{\eps}(t{-}))\mathcal{N}_{\beta/\eps,X_\eps}(\diff t)\right), \quad a{\in}\{p,d\}\notag\\
    \diff & W_\eps(t) \displaystyle = M\left(\Omega_{\eps,p}(t),\Omega_{\eps,d}(t),W(t)\right) \diff t.\label{eq:markW}
\end{align}
For simplicity, the initial condition of $(U_\eps(t))$ is assumed to be constant,
\begin{equation}\label{InCond}
  U_\eps(0) = U_0 = (x_0,z_0,\omega_{p,0},\omega_{d,0},w_0).
\end{equation}
Some simplifications of this (heavy) mathematical framework can be expected when $\eps$ goes to $0$.
We first introduce the notion of fast variables which correspond to the processes $(X(t),Z(t))$ with the synaptic weight process $(W(t))$ taken as constant.

\subsection{Fast Processes}
\begin{definition}\label{DefFastw}
  For $w{\in}K_W$,  $(X^w(t),Z^w(t))$ is the Markov process in $\R{\times}\R^\ell_+$ defined by the SDEs
\[
    \begin{cases}
\displaystyle \diff X^w(t) &= \displaystyle{-}X^w(t)\diff t{+}w\mathcal{N}_{\lambda}(\diff t){-}g\left(X^w(t{-})\right)\mathcal{N}_{\beta,X^w}\left(\diff t\right),\\
\displaystyle  \diff Z^w(t) &= \displaystyle \left({-}\gamma{\odot}Z^w(t){+}k_0\right)\diff t\\
               &\displaystyle\hspace{2cm} {+}k_1(Z^w(t{-}))\mathcal{N}_{\lambda}(\diff t){+}k_2(Z^w(t{-}))\mathcal{N}_{\beta,X^w}(\diff t).
    \end{cases}
\]
\end{definition}
Let $f{\in}\mathcal{C}_b^1(\R{\times}\R_+^\ell)$, then, with Equations~\eqref{eq:markX} and~\eqref{eq:markZ}, we have that
\[
  (M^F_{f,\eps}(t))\steq{def}\left(f(X_\eps(t),Z_\eps(t)){-}f(x_0,z_0){-}\frac{1}{\eps}\int_0^tB^F_{W_\eps(s)}(f)(X_\eps(s),Z_\eps(s))\diff s\right)
\]
is a local martingale, where, for $v{=}(x,z){\in}\R{\times}\R_+^{\ell}$ and,
\begin{multline}\label{BFGen}
B^F_w(f)(v)\steq{def}{-}x\frac{\partial f}{\partial x}(x,z){+}\croc{{-}\gamma{\odot}z+k_0,\frac{\partial f}{\partial z}(x,z)}
\\+\lambda \left(\rule{0mm}{4mm}f(x{+}w,z{+}k_1(z)){-}f(u)\right)
+\beta(x)\left(\rule{0mm}{4mm}f(x{-}g(x),z{+}k_2(z)){-}f(u)\right),
\end{multline}
with
\[
\frac{\partial f}{\partial z}(x,z){=}\left(\frac{\partial f}{\partial z_i}(x,z), i{\in}\{1,\ldots,\ell\}\right).
\]
$B^F_w$ is called the infinitesimal generator of the fast processes $(X^w(t),Z^w(t))$.

In Proposition~\ref{InvPropFP} of Appendix~\ref{section:invariant}, we establish that, under the conditions of Sections~\ref{SecPost} and~\ref{SecZ}, the fast process $(X^w(t),Z^w(t))$ has a unique invariant distribution $\Pi_w$.
\subsection{Functionals of the Occupation Measure}
We start with a rough, non-rigorous, picture of results that are usually established for {\em slow-fast} systems.
\begin{definition}[Occupation Measure]\label{OccMeas}
The occupation measure  is the non-negative measure $\nu_\eps$ on $[0,T]{\times}\R{\times}\R_+^\ell$ such that
\begin{equation}\label{OddMDef}
\nu_\eps(G)\steq{def}\int_{[0,T]{\times}\R{\times}\R_+^\ell}\hspace{-3mm} G(s,x,z)\nu_\eps(\diff s,\diff x,\diff z)\steq{def}\int_{[0,T]} G(s,X_\eps(s),Z_\eps(s))\diff s.
\end{equation}
for any non-negative Borelian function $G$ on $[0,T]{\times}\R{\times}\R_+^\ell$.
\end{definition}

The integration of Relation~\eqref{eq:markOm} gives the identity, for $a{=}\{p,d\}$
\begin{multline*}
\Omega_{\eps,a}(t) = \omega_{0,a} {-}\alpha\int_0^t \Omega_{\eps,a}(s)\diff s{+}\int_0^tn_{a,0}(Z_{\eps}(s))\diff s\\{+}  \int_0^t  n_{a,1}(Z_{\eps}(s{-}))\eps\mathcal{N}_{\lambda/\eps}(\diff s){+}\int_0^t  n_{a,2}(Z_{\eps}(s{-}))\eps\mathcal{N}_{\beta/\eps,X_\eps}(\diff s).
\end{multline*}

An averaging principle is said to hold when the convergence in distribution
\begin{multline}\label{eqcvp}
\lim_{\eps\to 0} \left(\int_0^t G(X_\eps(s),Z_\eps(s))\diff s\right)=\lim_{\eps\to 0} \left(\int_{\R{\times}\R^\ell_+} G(x,z)\nu_{\eps}(\diff s, \diff x, \diff z)\right)\\=\left(\int_0^t \int_{\R{\times}\R^\ell_+} G(x,z)\Pi_{w(s)}(\diff x,\diff z) \diff s\right),
\end{multline}
holds for a sufficiently rich class of Borelian functions $G$.
Usually, it is enough to prove the weak convergence of the occupation measure for bounded Borelian functions $G$.

In our case, there are  important examples where $G$ has a linear growth with respect to the coordinates $x$ or $z{=}(z_j)$.
Additionally, convergence results in distribution of the jump processes such as,
\[
\lim_{\eps\to 0} \left(\int_0^t G(X_\eps(s),Z_\eps(s))\eps{\cal N}_{\lambda/\eps}(\diff s)\right)=\left(\lambda \int_0^t \int_{\R{\times}\R^\ell_+} G(x,z)\Pi_{w(s)}(\diff x,\diff z) \diff s\right),
\]
and
\[
\lim_{\eps\to 0} \left(\int_0^t  \hspace{-1mm}G(X_\eps(s),Z_\eps(s))\eps{\cal N}_{\beta/\eps,X_\eps}(\diff s)\right)=\left(\int_0^t \int_{\R{\times}\R^\ell_+} \hspace{-5mm}G(x,z)\beta(x)\Pi_{w(s)}(\diff x,\diff z) \diff s\right)
\]
are also required. They are not  straightforward consequences of Relation~\eqref{eqcvp} as it is usually the case for bounded $G$.
See~\citet{karatzas_averaging_1992} for example.
These technical difficulties have to be overcome to establish the tightness of the processes $(\Omega_\eps(t))$, and consequently of $(W_\eps(t))$.
As a result, additional limit results have to be established at this point, see Section~\ref{OccSec}.
Furthermore, as $\eps$ goes to $0$, the process $(\Omega_{\eps,p}(t),\Omega_{\eps,d}(t),W_{\eps}(t))$ should converge to a process $(\omega_p(t),\omega_d(t),w(t))$ satisfying the relation,
\[
\begin{cases}
\displaystyle\omega_a(t) = \omega_{a,0} {-}\alpha\int_0^t \omega(s)\diff s{+}\int_0^t\int_{\R^\ell_+} n_{a,0}(z)\Pi_{w(s)}\left(\R_+,\diff z\right) \diff s
\\\hspace{1mm}{+} \displaystyle \lambda\int_0^t  \int_{\R^\ell_+} n_{a,1}(z) \Pi_{w(s)}\left(\R_+,\diff z\right)\diff s{+}\int_0^t \int_{\R{\times}\R^\ell_+} \beta(x) n_{a,2}(z)\Pi_{w(s)}(\diff x,\diff z) \diff s.
\\\displaystyle \frac{\diff w}{\diff t}(t)=M(\omega_p(t),\omega_d(t), w(t))
\end{cases}
\]
\section{Averaging Principle Results}
\label{section:theorems}
We fix $T{>}0$,  throughout the paper the  convergence in distribution of processes is considered on the bounded interval~$[0,T]$.

\subsection{Main Result}
We start by reviewing the assumptions detailed in Section~\ref{defNotSec} on the different parameters of the stochastic model.
\paragraph{\bf Assumptions}
\begin{enumerate}
    \item It is assumed that $\beta$ is a non-negative, continuous function on $\R$ and that $\beta(x){=}0$ for $x{\le}{-}c_\beta{\le}0$.
    Additionally, there exist  a constant  $C_\beta{\geq}0$ such that
    \[
    \beta(x) {\leq} C_{\beta}(1{+}|x|),\quad \forall x{\in}\R;
    \]
    \item $g$ is continuous function on $\R$ and $0{\le}g(x){\le} \max(c_g,x)$ holds for all $x{\in}\R$, for some $c_g{\ge}0$;
    \item All coordinates of the vector $\gamma$ are positive;
    \item There exists a constant $C_k{\geq}0$ such that $0{\le}k_0{\le}C_k$ and functions $k_{i}$, $i{=}1$, $2$, in ${\cal C}_b^1(\R_+^{\ell},\R_+^{\ell})$, are upper-bounded by $C_k{\geq}0$;
    \item There exists a constant $C_n$ such that, for $j{\in}\{0,1,2\}$, $a{\in}\{p,d\}$, $n_{a,j}$ verifies,
        \[
        n_{a,j}(z){\le} C_n(1{+}\|z\|),
        \]
    where, for $z{\in}\R_+^\ell$,  $\|z\|{=}z_1{+}\cdots{+}z_\ell$.
    Moreover, for any $w{\in}K_W$, the discontinuity points of
      \[
      (x,z){\mapsto}(n_{a,0}(z),n_{a,1}(z), \beta(x)n_{a,2}(z))
      \]
      for $a{\in}\{p,d\}$, are negligible for the probability distribution $\Pi_w$ of Section~\ref{section:invariant}.\label{itm:n}
    \item $M$ can be decomposed as, $$M(\omega_p,\omega_d,w){=}M_p(\omega_p,w){-}M_d(\omega_d,w) - \delta w,$$ where $M_{a}(\omega_a,w)$ is non-negative continuous function, non-decreasing on the first coordinate for a fixed $w{\in}K_w$, and,
    \[
       M_a(\omega_a,w)\leq C_M(1{+}\omega_a),
    \]
    for all $w{\in}K_W$, for $a{\in}\{p,d\}$. \label{itm:M}
\end{enumerate}
The main result of the paper is the following theorem.
\begin{theorem}[Asymptotic Time Evolution of Plastic Synapticity]\label{theorem:homog}
Under the conditions of Section~\ref{defNotSec} and for initial conditions satisfying Relation~\eqref{InCond}, there exists $S_0{\in}(0,{+}\infty]$, such that  the family of processes $(\Omega_{\eps,p}(t),\Omega_{\eps,d}(t),W_{\eps}(t),t{<}S_0)$, $\eps{\in}(0,1)$, of the system of Section~\ref{ScaledSec}, is tight for the convergence in distribution.
As $\eps$ goes to $0$, any  limiting point
$(\omega_p(t),\omega_d(t),w(t),t{<}S_0)$, satisfies the ODEs,  for $a{\in}\{p,d\}$,
\begin{equation}\label{ODEH}
\begin{cases}
\displaystyle  \frac{\diff \omega_a}{\diff t}(t)\hspace{-3mm}&\displaystyle{=}{-}\alpha \omega_a(t)
    {+}\hspace{-1mm}\int_{\R{\times}\R_+^\ell}\hspace{-2mm} \left(\rule{0mm}{4mm}n_{a,0}(z){+}\lambda n_{a,1}(z) {+} \beta(x)n_{a,2}(z)\right)\Pi_{w(t)}(\diff x,\diff z),\\[8pt]
\displaystyle \frac{\diff w}{\diff t}(t)\hspace{-3mm}&{=}M(\omega_p(t),\omega_d(t),w(t)),
  \end{cases}
\end{equation}
where, for $w{\in}K_W$, $\Pi_w$ is the unique invariant distribution $\Pi_w$ on $\R{\times}\R_+^\ell$ of the Markovian operator $B_w^F$.
If $K_W$ is bounded, then $S_0{=}{+}\infty$ almost-surely.
\end{theorem}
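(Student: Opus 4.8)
The plan is to follow the occupation-measure architecture of averaging principles, carried out under a confinement hypothesis on the synaptic weight and then localized. Because the limiting dynamics of $w$ may explode, I would first fix a large $R$, work on the event where $(W_\eps(t))$ stays in a compact subset of $K_W$, prove tightness and convergence there, and only afterwards let $R$ grow, defining $S_0$ as the explosion time of the limit. When $K_W$ is bounded the process $(W_\eps(t))$ takes values in $K_W$ for all $t$ by construction of $M$, so the confinement holds globally and $S_0{=}{+}\infty$ follows at once; the substance of the proof therefore concerns the confined regime.

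The first stage is uniform moment control and tightness. With $(W_\eps(t))$ bounded, the fast pair $(X_\eps,Z_\eps)$ is a system of interacting, linearly-driven shot-noise processes, and I would combine the estimates of Section~\ref{ShotSec} with the stochastic domination and monotonicity of Section~\ref{section:coupling} to obtain bounds of the form $\sup_{\eps}\E\!\left[\int_0^T (1{+}|X_\eps(s)|{+}\|Z_\eps(s)\|)^p\,\diff s\right]<\infty$ for sufficiently large $p$. These bounds give tightness of the occupation measure $\nu_\eps$ of Definition~\ref{OccMeas} on $[0,T]{\times}\R{\times}\R_+^\ell$ (control of the mass at infinity via Markov's inequality), while~\eqref{Condn} controls the increments of $(\Omega_{\eps,a})$; since $M$ is continuous with the linear bound of Section~\ref{MCond} and $(W_\eps)$ solves~\eqref{eq:markW}, an Aldous-type criterion yields tightness of $(\Omega_{\eps,p},\Omega_{\eps,d},W_\eps,\nu_\eps)$ as a whole.

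For the identification of limits I would use the local martingale $(M^F_{f,\eps}(t))$. Multiplying its defining identity by $\eps$ gives, for $f{\in}\mathcal{C}_b^1$,
\[
\eps\bigl(f(X_\eps(t),Z_\eps(t)){-}f(x_0,z_0)\bigr){-}\int_0^t B^F_{W_\eps(s)}(f)(X_\eps(s),Z_\eps(s))\,\diff s=\eps M^F_{f,\eps}(t),
\]
where the boundary term is $O(\eps)$ and the quadratic variation of $\eps M^F_{f,\eps}$ is $O(\eps)$, so both sides vanish in $L^2$. Along a converging subsequence $(\Omega_{\eps,a},W_\eps,\nu_\eps)\to(\omega_a,w,\nu)$, and since $W_\eps\to w$ uniformly and $w{\mapsto}B^F_w(f)$ is continuous, the limit satisfies $\int B^F_{w(s)}(f)\,\diff\nu=0$ for all $f$. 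The time marginal of $\nu$ is Lebesgue measure, so $\nu$ disintegrates as $\diff s\,\pi_s(\diff x,\diff z)$ with $\pi_s$ a probability measure, and the previous identity forces $\pi_s$ to be invariant for $B^F_{w(s)}$ for almost every $s$; the uniqueness in Proposition~\ref{InvPropFP} then gives $\pi_s{=}\Pi_{w(s)}$. Passing to the limit in the integrated form of~\eqref{eq:markOm} and transferring the result to~\eqref{eq:markW} by continuity of $M$ produces the system~\eqref{ODEH}.

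The main obstacle, and the reason the previous steps require high-order moments, lies in passing to the limit in the two jump integrals of~\eqref{eq:markOm}, whose integrands $n_{a,1}$ and $\beta(x)n_{a,2}(z)$ grow linearly and quadratically in the state. For the pre-synaptic term I would compensate $\eps\mathcal{N}_{\lambda/\eps}$ (compensator $\lambda\,\diff s$): the remaining martingale has quadratic variation of order $\eps$ and vanishes, while the compensated part converges to $\lambda\int_0^t\!\int n_{a,1}\,\diff\nu$ provided $\nu_\eps$ is uniformly integrable against $\|z\|$. The genuinely hard term is the post-synaptic one: compensating $\eps\mathcal{N}_{\beta/\eps,X_\eps}$ yields $\int_0^t \beta(X_\eps(s))n_{a,2}(Z_\eps(s))\,\diff s$, a product of two unbounded factors, so its convergence to $\int_0^t\!\int \beta(x)n_{a,2}(z)\,\diff\nu$ demands uniform integrability of $\nu_\eps$ against $\beta(x)\|z\|$, and killing the corresponding martingale remainder requires control of $\E\!\int_0^T\beta(X_\eps)\|Z_\eps\|^2\,\diff s$. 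Establishing exactly these no-escape-to-infinity estimates for the occupation measure is the technical core reserved for Section~\ref{OccSec}, and it is where I expect essentially all of the difficulty to concentrate.
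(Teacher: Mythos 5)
Your identification step (martingale problem for $B^F_{w}$, disintegration of the limiting occupation measure, uniqueness of $\Pi_w$ from Proposition~\ref{InvPropFP}, and the flagged uniform-integrability issues for the linearly and quadratically growing integrands) matches the paper's argument in Sections~\ref{OccSec} and~\ref{section:proof} quite closely, and you have correctly located where the analytic difficulty sits. The gap is in your localization scheme. You propose to work on the event where $(W_\eps(t))$ stays in a compact set, prove convergence there, and then ``let $R$ grow, defining $S_0$ as the explosion time of the limit.'' This does not close, for two reasons. First, the theorem only identifies limit points; it does not assert uniqueness of the limit for the general model (the paper explicitly declines to prove that $w\mapsto\Pi_w$ is regular enough for that). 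Without uniqueness, ``the explosion time of the limit'' is not a well-defined object, and different subsequences or different truncation levels $R$ could a priori produce different limiting trajectories with different explosion times. Second, and more fundamentally, you never establish that $\P\bigl(\sup_{t\le T}|W_\eps(t)|\le R\bigr)$ is close to $1$ uniformly in $\eps$ for some $R$; without such an a priori confinement estimate, the restriction to the confined event may lose mass as $\eps\to 0$ and the tightness claim on $[0,S_0)$ is not obtained.

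The paper's resolution, which is the one genuinely non-routine idea of the proof and is absent from your proposal, is to route everything through the dominating process $(\Dom{U}(t))$ of Section~\ref{section:coupling}, whose coefficients are linear and for which the invariant measures $\Dom{\Pi}_w$ have \emph{explicitly computable} first and second moments. This makes the limiting ODE~\eqref{AsymLinODE} for the dominating system have an explicit, non-decreasing, locally Lipschitz right-hand side, hence a \emph{unique} maximal solution $(\Dom{\omega}(t),\Dom{w}(t))$ on $[0,S_0)$; $S_0$ is defined as the blow-up time of \emph{this deterministic} ODE, not of a stochastic limit. A monotonicity/Picard comparison (Theorem~\ref{AsymLinProp}) then shows that \emph{every} limit point of the truncated dominating process, for \emph{every} truncation level $K$, is bounded above by this single deterministic function; taking $K_0{=}1{+}\sup_{t\le T}\Dom{w}(t)$ shows the truncation is asymptotically inactive (Relation~\eqref{equiK}), which simultaneously yields the uniform high-probability bound $\sup_{t\le T}|W_\eps(t)|\le K_0$ via the coupling of Proposition~\ref{coupprop} and the convergence of the dominating process (Proposition~\ref{prop:sapdom}). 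You would need to supply an argument of this type --- some tractable upper system whose limit is unique and explicitly bounded --- to make your $R\to\infty$ step and the definition of $S_0$ legitimate.
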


\paragraph{\sc Convergence in Distribution}
As already mentioned, most of the efforts in this paper are devoted to the proof of  the tightness property of $(\Omega_{\eps,p}(t),\Omega_{\eps,d}(t),W_\eps(t))$.  We note that our result identifies the limiting points, but it does not state any weak convergence results for the scaled processes.
Regularity properties are actually required on $(\Pi_w)$ to have such results. For example, it would be sufficient to have that the mapping,
\[
\Psi_a: w \mapsto \int_{\R{\times}\R_+^\ell} \left(\rule{0mm}{3mm} n_{a,0}(z){+}\lambda n_{a,1}(z){+}\beta(x) n_{a,2}(z)\right)\Pi_{w}(\diff x,\diff z),
\]
locally Lipschitz for $w$, for $a{\in}\{p,d\}$, so that Relation~\eqref{ODEH} has a unique solution. 

Due to the generality of our model,  we did not try to state a set of conditions that can ensure the desired regularity properties  of the corresponding $\Pi_w$. Uniqueness results are obtained in Sections~5 and~6 of~\citet{robert_mathematical_2020} for several important cases. The same properties for the simple model are worked out in Section~\ref{sec:simplemodel}. However at this stage, a case by case analysis seems mandatory.

\bigskip
\paragraph{\sc A Blow-up phenomenon}
As it can be seen, when $S_0{<}{+}\infty$ the convergence is only proved on a bounded time interval.
In the proof, the variable $S_0$ results from the domain definition of the solution to a deterministic differential equation.
This is not an artifact of our methods, see Proposition~\ref{ODESimpAvProp} in Section~\ref{sec:simplemodel} for an example.

\subsection{Steps of the Proof}
The proof of the theorem is organized as follows. See also Figure~\ref{fig:proof} of Appendix.
\begin{enumerate}
\item Section~\ref{section:coupling}.
A stochastic upper-bound $\Dom{U}$ of the original process is introduced and a coupling argument is used to control $(W_\eps(t))$.
This is an important ingredient in the proof of tightness results for $(\Omega_{\eps,p}(t),\Omega_{\eps,d}(t),W_{\eps}(t))$.
\item Section~\ref{OccSec}.
Under the temporary assumption that the process $(\Dom{W}(t))$ is bounded by $K$, we establish tightness results for the truncated process $\Dom{U}^K$, when $\eps$ goes to $0$, of variables associated to fast processes $(\Dom{X}^K_\eps(t),\Dom{Z}^K_\eps(t))$ of the type
\[
\left(\int_{[0,T]} G\left(s,\Dom{X}^K_\eps(s),\Dom{Z}^K_\eps(s)\right)\diff s\right)
\]
where $G$ is a continuous Borelian function with a linear  growth with respect to the coordinates $x{\in}\R$ and $z{\in}\R_+^\ell$.
An averaging principle is shown for this truncated process.
\item In Theorem~\ref{AsymLinProp} of Section~\ref{section:proof}, using monotonicity arguments, we are able to obtain  a {\em deterministic,  analytical bound}, uniform in $K$,  for the  limiting points of the truncated process.
From there, we prove an averaging principle for the dominating  process $\Dom{U}$ (without truncation) in Proposition~\ref{prop:sapdom} where the explicit form of the ODE verified by the limiting points is known.
As a direct consequence, we are able to prove that this limit is unique and that the scaled dominating process converges to the solution.
Using the fact that the process $W(t)$ is bounded by $\Dom{W}(t)$ and the previous convergence, we establish the desired results for the process $U_{\eps}(t)$ of Theorem~\ref{theorem:homog}.
\end{enumerate}

\subsection{Technical Results on Shot-Noise Processes}
The processes $(X(t))$ and $(Z(t))$ are closely related to shot-noise processes and their generalizations. See for example~\citet{Schottky}, \citet{Rice} and~\citet{gilbert_amplitude_1960} for an introduction. We give a quick overview of their use in our proofs. 
In Appendix~\ref{ShotSec}, the results below and several technical lemmas for these processes are detailed and proved.

The following lemma gives an elementary representation result for general shot-noise process associated to a positive Radon measure.
See Lemma~1 of~\citet{robert_mathematical_2020}.
\begin{lemma}\label{lemma:expofilter}
 If $\mu$ is a positive Radon measure on $\R_+$ and $\gamma{>}0$,  the unique \cadlag solution of the ODE
  \[
  \diff Z(t)={-}\gamma Z(t) {+}\mu(\diff t),
  \]
  with initial point $z_0{\in}\R_+$ is given by
\begin{equation}\label{eqSN}
  Z(t)=z_0e^{-\gamma t}+\int_{(0,t]}e^{-\gamma(t{-}s)}\mu(\diff s).
\end{equation}
\end{lemma}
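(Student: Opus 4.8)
The plan is to treat this as the variation-of-constants (integrating-factor) formula for a linear scalar equation driven by a measure, and to split the argument into an explicit construction that settles both existence and the stated formula, followed by a separate Gr\"onwall argument for uniqueness. Throughout I read the equation $\diff Z(t){=}{-}\gamma Z(t)\diff t{+}\mu(\diff t)$ in its integrated form: a \cadlag solution is a function $(Z(t))$ with
\[
  Z(t)=z_0-\gamma\int_0^t Z(s)\diff s+\mu((0,t]),\quad t\ge 0.
\]

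First I would establish existence by verification. Define $(Z(t))$ as the right-hand side of Relation~\eqref{eqSN}, and introduce the auxiliary function $Y(t){=}e^{\gamma t}Z(t){=}z_0{+}\int_{(0,t]}e^{\gamma s}\mu(\diff s)$. Since $\mu$ is a positive Radon measure, $(Y(t))$ is the sum of the constant $z_0$ and a non-decreasing function, hence a genuine \cadlag function of locally bounded variation, and by construction $\diff Y(t){=}e^{\gamma t}\mu(\diff t)$; consequently $(Z(t)){=}(e^{-\gamma t}Y(t))$ is \cadlag as well. Because the integrating factor $t{\mapsto}e^{-\gamma t}$ is $\mathcal{C}^1$, and in particular continuous, the Stieltjes product rule applied to $Z(t){=}e^{-\gamma t}Y(t)$ carries no jump-covariation correction and yields
\[
  \diff Z(t)={-}\gamma e^{-\gamma t}Y(t)\diff t+e^{-\gamma t}\diff Y(t)={-}\gamma Z(t)\diff t+\mu(\diff t),
\]
with $Z(0){=}z_0$. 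This shows that the function of Relation~\eqref{eqSN} is indeed a \cadlag solution.

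For uniqueness, suppose $(Z_1(t))$ and $(Z_2(t))$ are two \cadlag solutions and set $D{=}Z_1{-}Z_2$. Subtracting the two integrated equations, the measure term $\mu$ cancels exactly and one is left with $D(t){=}{-}\gamma\int_0^t D(s)\diff s$. In particular $D$ is absolutely continuous with $D(0){=}0$ and satisfies $|D(t)|{\le}\gamma\int_0^t|D(s)|\diff s$; Gr\"onwall's lemma then forces $D{\equiv}0$ on every compact interval, hence on $\R_+$, which gives uniqueness.

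The computation is elementary and I do not anticipate a serious obstacle; the only point requiring genuine care is the justification of the Stieltjes product rule used to differentiate $e^{-\gamma t}Y(t)$, namely that no extra term arises from simultaneous jumps of the two factors. This is precisely what the continuity (and finite variation) of the deterministic factor $e^{-\gamma t}$ guarantees. Equivalently, one can bypass the product rule entirely by substituting the explicit formula directly into the integrated equation and evaluating $\int_0^t Z(s)\diff s$ by Fubini's theorem, verifying the identity by hand. Constructing the solution explicitly, rather than positing an unknown solution and manipulating it, also avoids any circular reliance on a priori regularity or local boundedness.
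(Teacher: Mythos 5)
Your proof is correct. The paper itself does not prove this lemma --- it simply points to Lemma~1 of the companion paper of Robert and Vignoud --- and your argument (explicit verification of $Z(t)=e^{-\gamma t}Y(t)$ via the Stieltjes product rule, which carries no jump-covariation term because the integrating factor $e^{-\gamma t}$ is continuous and of finite variation, followed by cancellation of $\mu$ and Gr\"onwall for uniqueness) is precisely the standard variation-of-constants argument that reference relies on, so it fills the omitted step correctly.
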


\noindent
In view of SDEs~\eqref{eq:scaledsimple} it is natural to introduce a scaled version of these processes. 
\begin{definition}[Scaled Shot-Noise Process]\label{SNPdef}
For  $\eps{>}0$, we define the shot-noise process $(S^x_{\eps}(t))$, solution of the SDE
\[
\diff S^x_{\eps}(t)={-}S^x_{\eps}(t)\diff t/\eps+{\cal N}_{\lambda/\eps}(\diff t),
\]
where the initial point is $x{\ge}0$.
\end{definition}

\begin{proposition}\label{lemma:Slambda}
 For $\xi{\in}\R$ and $x{\geq}0$,  the convergence in distribution of the processes
  \[
\lim_{\eps\searrow 0} \left(\int_0^t e^{\xi S^x_{\eps}(u)}\diff u\right)=\left(\E\left[e^{\xi S(\infty)}\right]t\right)
  \]
 holds, and
\begin{equation}\label{eqBouExp}
\sup_{\substack{0{<}\eps{<}1\\0{\leq}t{\leq}T}} \E\left[e^{\xi S_{\eps}(t)}\right] {<}{+}\infty.
\end{equation}
\end{proposition}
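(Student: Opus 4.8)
My plan is to reduce everything to the un-scaled shot-noise process by a time-change, and then read off both assertions from the ergodic theorem together with one explicit Laplace-transform computation. First, Lemma~\ref{lemma:expofilter} applied with decay rate $1/\eps$ gives the representation
\[
S^x_\eps(t)=xe^{-t/\eps}+\int_{(0,t]}e^{-(t-s)/\eps}\mathcal{N}_{\lambda/\eps}(\diff s).
\]
Scaling time by $\eps$ and using that a Poisson process of rate $\lambda/\eps$ read on the $\eps$-dilated time axis is a Poisson process of rate $\lambda$, one checks the distributional identity $(S^x_\eps(t))_{t\ge0}\stackrel{d}{=}(S^x(t/\eps))_{t\ge0}$, where $(S^x(t))$ solves $\diff S^x(t){=}{-}S^x(t)\diff t{+}\mathcal{N}_\lambda(\diff t)$, the un-scaled shot-noise process. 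I would in fact realise all the $S^x_\eps$ on one probability space by setting $S^x_\eps(t){:=}S^x(t/\eps)$, which keeps the correct marginal law for every $\eps$ and turns the limit $\eps\searrow0$ into the large-time behaviour of a single process.

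For the uniform bound~\eqref{eqBouExp}, I would compute $\E[e^{\xi S^x_\eps(t)}]$ directly from the representation. The deterministic term contributes $e^{\xi xe^{-t/\eps}}\le e^{(\xi\vee0)x}$, and the exponential formula for Poisson point processes gives
\[
\E\Bigl[\exp\Bigl(\xi\int_{(0,t]}e^{-(t-s)/\eps}\mathcal{N}_{\lambda/\eps}(\diff s)\Bigr)\Bigr]=\exp\Bigl(\lambda\int_0^{t/\eps}\bigl(e^{\xi e^{-r}}-1\bigr)\diff r\Bigr),
\]
after the change of variable $r=(t-s)/\eps$. Since $r\mapsto e^{\xi e^{-r}}-1$ is integrable on $\R_+$ (near $+\infty$ it behaves like $\xi e^{-r}$, the integral reducing to $\lambda\int_0^1(e^{\xi y}-1)/y\,\diff y<{+}\infty$ after $y=e^{-r}$), the right-hand side is bounded, uniformly in $\eps$ and $t$, by $\E[e^{\xi S(\infty)}]$ for $\xi>0$, where the integrand is non-negative so truncation only decreases it; the case $\xi\le0$ is trivial since $S^x_\eps\ge0$ forces $e^{\xi S^x_\eps(t)}\le1$. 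This simultaneously proves~\eqref{eqBouExp} and shows that the invariant distribution of the shot-noise process has finite exponential moments of every order, i.e.\ $\E[e^{\xi S(\infty)}]<{+}\infty$ for all $\xi\in\R$.

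For the convergence in distribution, the time-change gives, after the substitution $u=\eps v$,
\[
\int_0^t e^{\xi S^x_\eps(u)}\diff u \stackrel{d}{=}\eps\int_0^{t/\eps}e^{\xi S^x(v)}\diff v = t\cdot\frac{1}{t/\eps}\int_0^{t/\eps}e^{\xi S^x(v)}\diff v.
\]
The un-scaled process $(S^x(t))$ is a positive Harris recurrent Markov process whose invariant law is that of $S(\infty)$; since $y\mapsto e^{\xi y}$ is integrable for this law by the previous step, the ergodic theorem yields $\tfrac1T\int_0^T e^{\xi S^x(v)}\diff v\to\E[e^{\xi S(\infty)}]$ almost surely as $T\to{+}\infty$, so that $\int_0^t e^{\xi S^x_\eps(u)}\diff u\to t\,\E[e^{\xi S(\infty)}]$ almost surely for each fixed $t$ in the realisation above. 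Finally, as $t\mapsto\int_0^t e^{\xi S^x_\eps(u)}\diff u$ is non-decreasing and the limit $t\mapsto t\,\E[e^{\xi S(\infty)}]$ is continuous, the classical fact that pointwise convergence of monotone functions to a continuous limit is uniform on compact sets upgrades this to uniform convergence on $[0,T]$, hence to convergence in distribution of the processes. The main obstacle is the ergodic input: one must know that the shot-noise Markov process is positive Harris recurrent with the stated invariant measure and that $e^{\xi S(\infty)}$ is integrable, so that the time-averages converge to the correct constant. This is exactly what the explicit moment computation of the second step and the recurrence results of the appendix supply; everything else is bookkeeping around the time-change.
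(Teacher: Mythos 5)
Your proof is correct, and it shares the paper's two main ingredients --- the time-change identity $(S^x_\eps(t))\stackrel{d}{=}(S^x(t/\eps))$ together with the explicit Poisson exponential formula for the uniform bound \eqref{eqBouExp}, and the ergodic theorem for the shot-noise Markov process for the identification of the limit --- but it departs from the paper in how the pointwise limit is upgraded to convergence at the level of processes. The paper proves tightness of the family $\bigl(\int_0^t e^{\xi S_\eps(u)}\diff u\bigr)$ via Aldous' criterion, which requires a stopping-time estimate built from the strong Markov property and the stochastic monotonicity \eqref{StocMon}, and then combines tightness with almost-sure convergence of the finite-dimensional marginals. You instead exploit the fact that $t\mapsto\int_0^t e^{\xi S^x_\eps(u)}\diff u$ is non-decreasing (the integrand being positive) and that the limit $t\mapsto t\,\E[e^{\xi S(\infty)}]$ is continuous, so that the classical Dini--P\'olya argument turns almost-sure pointwise convergence (on a countable dense set of times, under the single-probability-space realisation $S^x_\eps(t):=S^x(t/\eps)$) into almost-sure uniform convergence on $[0,T]$, hence convergence in distribution. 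This is more elementary and avoids the Aldous machinery entirely; the trade-off is that it relies crucially on monotonicity of the integrated functional and so does not generalise to the signed or non-monotone functionals treated elsewhere in the paper, which is presumably why the authors set up the Aldous-type estimate here. Your explicit verification that $\E[e^{\xi S(\infty)}]<{+}\infty$ via the substitution $y=e^{-r}$ is a welcome addition: the paper states this finiteness after Relation \eqref{eqfg1} but does not spell out the integrability at infinity, and the ergodic theorem for the unbounded function $y\mapsto e^{\xi y}$ does require it.
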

\begin{proof}
See Section~\ref{ScSN} of Appendix.
\end{proof}
We now introduce  another shot-noise process $(R_{\eps}(t))$ associated to the point process $\mathcal{N}_{I/\eps,S_{\eps}}$ defined by Relation~\eqref{Nbeta} where $I(x){=}x$, $x{\in}\R_+$. It is in fact a shot-noise process whose intensity function is $(S_{\eps}(t))$,
\begin{equation}\label{Reps}
\diff R_{\eps}(t)  =    {-}\gamma R_{\eps}(t)\diff t/\eps{+}\mathcal{N}_{I/\eps,S_{\eps}}(\diff t),
\end{equation}
with the initial condition $R_\eps(0){=}0$.

It turns out that tightness properties of three families of linear functionals of such processes
\[
\left(\int_0^t R_{\eps}(s)\,\diff s\right),
\left(\int_0^t R_{\eps}(s)\eps{\cal N}_{\lambda/\eps}(\diff s)\right),
\left(\int_0^t R_{\eps}(s)\eps{\cal N}_{I/\eps,S_{\eps}}(\diff s)\right),
\]
are central to establish Theorem~\ref{theorem:homog}.
The motivation comes from the three terms in the  expression of $(\Omega_{\eps,a}(t))$, $a{\in}\{p,d\}$ of Relation~\eqref{eq:markOm} and the fact that, with the condition of Relation~\eqref{Condn}, for $j{\in}\{0,1,2\}$, $n_j(z){\le}C_n^0{+}C_nz$ for $z{\in}\R_+$.

The necessary results are stated in Proposition~\ref{Fam1} which is proved in Section~\ref{TiSecOM} of Appendix,  and then used in Section~\ref{section:proof}.
\begin{proposition}\label{Fam1}
For $H_\eps{\in}\{S_{\eps},R_\eps\}$, the families of processes
\[
\left(\int_0^t H_{\eps}(u)\,\diff u\right), \left(\int_0^t H_{\eps}(u)^2\,\diff u\right)  \text{ and } \left(\int_0^t R_{\eps}(u)S_{\eps}(u)\,\diff u\right),\;\eps{\in}(0,1),
\]
are tight for the convergence in distribution.
\end{proposition}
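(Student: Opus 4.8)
I would prove Proposition~\ref{Fam1} by first controlling the processes $S_\eps$ and $R_\eps$ through their moments and then deducing tightness of the time-integrals from uniform-integrability estimates. The key observation is that $(S_\eps(t))$ is a scaled shot-noise process driven by a Poisson process ${\cal N}_{\lambda/\eps}$, so by Lemma~\ref{lemma:expofilter} it admits the explicit representation
\[
S_\eps(t)=x\,e^{-t/\eps}+\int_{(0,t]}e^{-(t-s)/\eps}{\cal N}_{\lambda/\eps}(\diff s).
\]
The exponential-moment bound of Proposition~\ref{lemma:Slambda}, namely $\sup_{0<\eps<1,\,0\le t\le T}\E[e^{\xi S_\eps(t)}]<+\infty$, immediately furnishes uniform bounds on all polynomial moments of $S_\eps(t)$; in particular $\sup_\eps\E[S_\eps(t)^2]<+\infty$ uniformly on $[0,T]$. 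The families $\bigl(\int_0^t S_\eps(u)\,\diff u\bigr)$ and $\bigl(\int_0^t S_\eps(u)^2\,\diff u\bigr)$ are then tight by the standard Kolmogorov/Aldous criterion: their increments over $[s,t]$ have expectation bounded by $C|t-s|$, and since the integrands are non-negative the processes are monotone, so tightness in the Skorohod (in fact uniform) topology follows from the modulus-of-continuity estimate $\E\bigl[\int_s^t S_\eps(u)^k\,\diff u\bigr]\le C(t-s)$.

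The heart of the matter is the process $R_\eps$, whose driving point process ${\cal N}_{I/\eps,S_\eps}$ has the \emph{random, unbounded} intensity $(S_\eps(t)/\eps)$. First I would write, again via Lemma~\ref{lemma:expofilter},
\[
R_\eps(t)=\int_{(0,t]}e^{-\gamma(t-s)/\eps}{\cal N}_{I/\eps,S_\eps}(\diff s),
\]
and compute the conditional intensity to obtain a Doob--Meyer decomposition of $R_\eps$ into a predictable drift plus a martingale. Taking expectations, one finds that $\E[R_\eps(t)]$ and $\E[R_\eps(t)^2]$ satisfy linear differential inequalities whose coefficients involve the moments $\E[S_\eps(t)]$ and $\E[S_\eps(t)^2]$; these are controlled uniformly in $\eps$ by the previous step. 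The crucial point is that the factor $\eps$ in $\eps{\cal N}_{\lambda/\eps}$ exactly compensates the $1/\eps$ rate, so the compensator of $\int_0^t R_\eps(u)\eps{\cal N}(\diff u)$ is $O(1)$; concretely the predictable bracket of the martingale part carries an extra factor $\eps$ and hence vanishes, while the drift term stays bounded. This yields $\sup_{\eps,\,t\le T}\E[R_\eps(t)^k]<+\infty$ for $k=1,2$, from which the tightness of $\bigl(\int_0^t R_\eps(u)\,\diff u\bigr)$ and $\bigl(\int_0^t R_\eps(u)^2\,\diff u\bigr)$ follows exactly as for $S_\eps$. For the cross-term $\int_0^t R_\eps(u)S_\eps(u)\,\diff u$ I would use Cauchy--Schwarz, $\E[R_\eps(u)S_\eps(u)]\le \E[R_\eps(u)^2]^{1/2}\E[S_\eps(u)^2]^{1/2}$, reducing it to the already-established second-moment bounds.

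\textbf{The main obstacle.} The delicate step is the second-moment estimate for $R_\eps$, because $R_\eps$ and its driving intensity $S_\eps$ are strongly correlated: the jumps of ${\cal N}_{I/\eps,S_\eps}$ occur at rate $S_\eps/\eps$, so squaring $R_\eps$ and taking the compensator produces a term of the form $\frac{1}{\eps}\E[\int_0^t e^{-2\gamma(t-s)/\eps}S_\eps(s)\,\diff s]$ together with cross-terms $\E[R_\eps(s)S_\eps(s)]$ that couple back into the very quantity one is trying to bound. The resolution is to set up a \emph{coupled} system of differential inequalities for the triple $(\E[R_\eps^2],\E[R_\eps S_\eps],\E[S_\eps^2])$ and to verify that, after the $1/\eps$ and $\eps$ factors cancel, the resulting Gronwall-type system has coefficients bounded uniformly in $\eps$ on $[0,T]$. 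I expect the bookkeeping of these cancellations to be the most technical part of the argument, and it is presumably where the interacting-shot-noise machinery of Section~\ref{ShotSec} is genuinely needed rather than a soft moment bound.
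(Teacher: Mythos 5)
Your overall architecture (uniform moment bounds on $S_\eps$ and $R_\eps$, then Cauchy--Schwarz reductions and a Kolmogorov/Aldous-type increment estimate) is the same as the paper's, but there is a genuine quantitative gap: second moments of $R_\eps$ are not enough. With only $\sup_{\eps,t}\E[R_\eps(t)^2]<\infty$ you obtain, for the family $\bigl(\int_0^t R_\eps(u)^2\,\diff u\bigr)$, merely a \emph{first}-moment bound on increments, $\E\bigl[\int_s^t R_\eps(u)^2\,\diff u\bigr]\le C(t-s)$, and the same is true for $\bigl(\int_0^t R_\eps(u)S_\eps(u)\,\diff u\bigr)$ after your Cauchy--Schwarz step $\E[R_\eps S_\eps]\le\E[R_\eps^2]^{1/2}\E[S_\eps^2]^{1/2}$. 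A first-moment increment bound at deterministic times does not yield tightness: the union bound over a partition of $[0,T]$ into $T/h$ intervals produces a quantity of order $CT/\delta$ that does not vanish as $h\to0$, and running Aldous' criterion over stopping times reintroduces fourth moments (or a delicate strong-Markov restart for the pair $(S_\eps,R_\eps)$) through the correlation between the stopping time and the integrand. This is exactly why the paper proves a uniform \emph{fourth}-moment bound $\sup_{\eps\in(0,1),\,t\ge0}\E[R_\eps(t)^4]<\infty$ (Proposition~\ref{R4Mom}): it gives, via Cauchy--Schwarz, $\E\bigl[\bigl(\int_s^t H_\eps(u)^2\,\diff u\bigr)^2\bigr]\le C(t-s)^2$, and Kolmogorov--\v{C}entsov then applies; the terms $\int_s^t H_\eps\,\diff u$ and $\int_s^t R_\eps S_\eps\,\diff u$ are handled by the pathwise bounds $\int_s^t H_\eps\,\diff u\le\sqrt{t-s}\,\bigl(\int_s^t H_\eps^2\,\diff u\bigr)^{1/2}$ and $\int_s^t R_\eps S_\eps\,\diff u\le\bigl(\int_s^t R_\eps^2\,\diff u\bigr)^{1/2}\bigl(\int_s^t S_\eps^2\,\diff u\bigr)^{1/2}$, not by taking expectations of the integrand. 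Your claim that tightness of $\int_0^t R_\eps^2\,\diff u$ ``follows exactly as for $S_\eps$'' hides the fact that for $S_\eps$ you have all polynomial moments from the exponential bound~\eqref{eqBouExp}, whereas for $R_\eps$ you stopped at $k=2$.

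The gap is repairable within your framework (the moment hierarchy $\E[R_\eps^jS_\eps^k]$, $j+k\le4$, is triangular and the $1/\eps$ factors cancel term by term since $\int_0^te^{-c(t-s)/\eps}\eps^{-1}\diff s\le1/c$), but the paper's route to the fourth moment is simpler and worth noting: since $S_\eps$ is a functional of ${\cal P}_1$ alone and ${\cal P}_2$ is independent of ${\cal P}_1$, conditionally on ${\cal P}_1$ the variable $R_\eps(t)$ is a deterministic-kernel integral against a Poisson process of known intensity $S_\eps(u)/\eps$, so its fourth moment is given exactly by the Poisson moment formula of Lemma~\ref{MomPois} in terms of $J_{k,\eps}(t)=\int_0^te^{-\gamma k(t-u)/\eps}S_\eps(u)\eps^{-1}\diff u$, which are in turn dominated by shot-noise functionals of ${\cal N}_\lambda$ with moments of all orders. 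No Doob--Meyer decomposition or Gronwall system is needed, and the ``strong correlation'' you flag as the main obstacle dissolves under this conditioning.
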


\section{A Coupling Property}\label{section:coupling}
In  this section  a process
\[
(\Dom{U}(t)){=}(\Dom{X}(t),\Dom{Z}(t), \Dom{\Omega}(t),\Dom{W}(t))
\]
in ${\cal D\left( [0,T], \R_+^4 \right)}$ is introduced.
It has similarities with the process $(U(t))$ of Definition~\eqref{DefPar} but fewer coordinates and simpler parameters.
More importantly, all its coordinates are non-negative. 
We first prove, via a coupling,  that the sample paths of the processes $(U(t))$ and $(\Dom{U}(t))$ can be compared in a sense to be made precise.
Secondly, we derive several technical estimates for $(\Dom{U}(t))$ which are important to prove the tightness of the scaled processes $((\Dom{\Omega}_\eps(t),\Dom{W}_\eps(t)))$ defined in Section~\ref{ScaledSec}.

The process $(\Dom{U}(t))$ is the solution of the SDEs
\begin{equation}\label{eq:markovmaj}
\begin{cases}
    \diff \Dom{X}(t) &\displaystyle = {-}\Dom{X}(t)\diff t+\Dom{W}(t)\mathcal{N}_{\lambda}(\diff t),\\
    \diff  \Dom{Z}(t) &\displaystyle =    \left(-\underline{\gamma} \Dom{Z}(t) +C_k\right)\diff t + C_k\mathcal{N}_{\lambda}(\diff t){+}C_k\mathcal{N}_{\Dom{\beta},\Dom{X}}(\diff t),\\
    \diff \Dom{\Omega}(t)&\displaystyle{=} {-}\alpha\Dom{\Omega}(t)\diff t{+}C_n\left(1{+}\ell\Dom{Z}(t)\right)\diff t{+}
          C_n\left(1{+}\ell\Dom{Z}(t{-})\right)\mathcal{N}_{\lambda}(\diff t)
          \\&\hspace{1cm}+C_n\left(1{+}\ell\Dom{Z}(t{-})\right)\mathcal{N}_{\Dom{\beta},\Dom{X}}(\diff s)\\
    \diff \Dom{W}(t) &\displaystyle = C_M\left(1{+}\Dom{\Omega}(t)\right)\diff t,
\end{cases}
\end{equation}
with $\Dom{\beta}(x){=}C_\beta(1{+}x)$ and with initial condition  $\Dom{U}(0)$ given by
\[
(\Dom{x}_0,\Dom{z}_0, \Dom{\omega}_0,\Dom{W}_0){=}(\max(x_0, 0),\max_{i{\in}\{1,\ldots,\ell\}}\{z_{0,i}\}, \max_{a{\in}\{a,p\}}\{\omega_{0,a}\} , |w_0|).
\]
$C_{\beta}$, $c_{\beta}$, $c_g$, $C_n$, $C_k$ and $C_M$ are non-negative constants associated to the conditions of Section~\ref{ModSec}, and $\underline{\gamma}{\steq{def}}\min(\gamma_{i}:i{=}1,\ldots,\ell)$.

Throughout this section, for $t{\ge}0$, if $(U(t))$ is a solution of Relations~\eqref{eq:markov}, the inequality $U(t){\le}\Dom{U}(t)$ will stand for the four relations,
$X(t){\le}\Dom{X}(t)$,
\[
\max_{i{\in}\{1,\ldots,\ell\}} \left\{ Z_{i}(t) \right\} {\le} \Dom{Z}(t),  \max_{a{\in}\{p,d\}} \left\{\Omega_a(t)\right\} {\le}\Dom{\Omega}(t), \text{ and } |W(t)| {\le} \Dom{W}(t).
\]

\subsection{A Coupling Property}
We start by proving a monotonicity  property of the behavior of both systems ``between'' jumps.

Define $u(t)=(x(t),z(t),\omega_p(t),\omega_d(t), w(t))$ that follows,
\begin{equation*}
\begin{cases}
    \diff x(t) &\displaystyle = {-}x(t)\diff t,\\
    \diff z_{i}(t) &\displaystyle =  \left({-}\gamma_{i} z_{i}(t){+}k_0\right) \diff t,\quad i{\in}\{1,\ldots,\ell\},\\
    \diff \omega_{a}(t)&\displaystyle = {-}\alpha\omega_{a}(t)\diff t{+}n_{0,a}(z(t))\diff t,\quad a{\in}\{p,d\},\\
    \diff w(t) &\displaystyle = M\left(\omega_p(t),\omega_d(t), w(t)\right)\diff t.
\end{cases}
\end{equation*}
and $\Dom{u}(t)=(\Dom{x}(t),\Dom{z}(t),\Dom{\omega}(t),\Dom{w}(t))$ with,
\begin{equation*}
\begin{cases}
        \diff \Dom{x}(t) &\displaystyle = {-}\Dom{x}(t)\diff t,\\
        \diff \Dom{z}(t) &\displaystyle =    \left({-}\underline{\gamma} \Dom{z}(t) {+}C_k\right) \diff t,\\
        \diff \Dom{\omega}(t)&\displaystyle = {-}\alpha\Dom{\omega}(t)\diff t{+}C_n\left(1+\ell\Dom{z}(t)\right)\diff t,\\
        \diff \Dom{w}(t) &\displaystyle = C_M\left(1+\Dom{\omega}(t)\right)\diff t.
    \end{cases}
\end{equation*}

\begin{lemma}\label{LemCoup}
Under the conditions of Sections~\ref{SecZ}, \ref{OmCond}, and~\ref{MCond} and if the initial conditions are such that $u(0){\le}\Dom{u}(0)$, then for all $t{\geq}0$, $u(t){\le}\Dom{u}(t)$.
\end{lemma}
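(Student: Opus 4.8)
The plan is to exploit the \emph{triangular} structure of the two deterministic systems: the equations for $x$ and $\Dom{x}$ are autonomous, those for $z$ and $\Dom{z}$ are autonomous, the equations for $(\omega_p,\omega_d)$ and $\Dom{\omega}$ are forced only through $z$ and $\Dom{z}$, and finally the equations for $w$ and $\Dom{w}$ are forced only through the $\omega$'s. I would therefore establish the four inequalities in this order, each one feeding the next, by repeated use of the elementary scalar comparison principle for ordinary differential equations: if $\dot a(t)\le F(t,a(t))$ and $\dot b(t)=F(t,b(t))$ with $F$ Lipschitz in its second argument and $a(0)\le b(0)$, then $a(t)\le b(t)$ for all $t\ge 0$.

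First, $\diff x(t){=}{-}x(t)\diff t$ and $\diff\Dom{x}(t){=}{-}\Dom{x}(t)\diff t$ integrate to $x(t){=}x(0)e^{-t}$ and $\Dom{x}(t){=}\Dom{x}(0)e^{-t}$, so $x(0)\le\Dom{x}(0)$ gives $x(t)\le\Dom{x}(t)$ at once. For each coordinate of $z$ I would compare $\dot z_i{=}{-}\gamma_i z_i{+}k_{0,i}$ with $\dot{\Dom{z}}{=}{-}\underline{\gamma}\Dom{z}{+}C_k$. Since $\gamma_i\ge\underline{\gamma}$ and $0\le k_{0,i}\le C_k$, for every $y\ge 0$ one has ${-}\gamma_i y{+}k_{0,i}\le{-}\underline{\gamma}y{+}C_k$; as the trajectory $z_i$ stays in $\R_+$ and $z_i(0)\le\max_j z_j(0)\le\Dom{z}(0)$, the comparison principle yields $z_i(t)\le\Dom{z}(t)$, hence $\max_i z_i(t)\le\Dom{z}(t)$.

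For $a\in\{p,d\}$, both $\dot\omega_a{=}{-}\alpha\omega_a{+}n_{0,a}(z(t))$ and $\dot{\Dom{\omega}}{=}{-}\alpha\Dom{\omega}{+}C_n(1{+}\ell\Dom{z}(t))$ share the decay rate $\alpha$, and the growth bound $n_{0,a}(z)\le C_n(1{+}\|z\|)$ combined with $\|z(t)\|{=}\sum_i z_i(t)\le\ell\Dom{z}(t)$ from the previous step gives $n_{0,a}(z(t))\le C_n(1{+}\ell\Dom{z}(t))$. Reading $C_n(1{+}\ell\Dom{z}(t))$ as a known forcing and using $\omega_a(0)\le\Dom{\omega}(0)$, the comparison principle gives $\omega_a(t)\le\Dom{\omega}(t)$.

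The main obstacle is the last coordinate, since the required bound $|w(t)|\le\Dom{w}(t)$ is two-sided while $w$ may change sign; I would split it using the decomposition $M(\omega_p,\omega_d,w){=}M_p(\omega_p,w){-}M_d(\omega_d,w){-}\delta w$. For the upper bound, discarding the non-negative $M_d$ and using $M_p(\omega_p,w)\le C_M(1{+}\omega_p)\le C_M(1{+}\Dom{\omega})$ gives $\dot w\le C_M(1{+}\Dom{\omega}){-}\delta w$; subtracting $\dot{\Dom{w}}{=}C_M(1{+}\Dom{\omega})$, the difference $d{=}w{-}\Dom{w}$ obeys $\dot d\le{-}\delta d{-}\delta\Dom{w}\le{-}\delta d$, where $\Dom{w}\ge 0$ follows from $\Dom{z},\Dom{\omega}\ge 0$ (which keep $\dot{\Dom{w}}\ge0$) and $\Dom{w}(0){=}|w_0|\ge0$. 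Since $d(0){=}w_0{-}|w_0|\le0$, a Gr\"onwall argument gives $d(t)\le0$, i.e. $w(t)\le\Dom{w}(t)$. Symmetrically, discarding $M_p$ and bounding $M_d(\omega_d,w)\le C_M(1{+}\Dom{\omega})$ yields $\dot w\ge{-}C_M(1{+}\Dom{\omega}){-}\delta w$, so $e{=}w{+}\Dom{w}$ satisfies $\dot e\ge{-}\delta e$ with $e(0){=}w_0{+}|w_0|\ge0$, whence $w(t)\ge{-}\Dom{w}(t)$. Combining the two bounds closes the argument. The only delicate points are the nonnegativity facts $\Dom{z},\Dom{\omega},\Dom{w}\ge0$ needed to discard the favorable terms (together with the sign $\delta\ge0$ of the decay) and a careful statement of the comparison lemma; the remaining estimates are routine.
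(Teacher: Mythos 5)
Your proposal is correct and follows essentially the same route as the paper: coordinate-by-coordinate comparison exploiting the triangular structure, with the $\omega$-step resting on the bound $n_{0,a}(z)\le C_n(1+\|z\|)\le C_n(1+\ell\Dom{z})$ and the $w$-step split into an upper bound via $M_p\le C_M(1+\omega_p)$ and a lower bound via $M_d\le C_M(1+\omega_d)$. Your treatment of the last coordinate is in fact more careful than the paper's (which writes the difference equation for $\Dom{w}-w$ somewhat loosely), and you correctly flag the nonnegativity of $\Dom{z},\Dom{\omega},\Dom{w}$ and of $\delta$ as the points that make the discarded terms favorable.
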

\begin{proof}
 The result is clear for the function $(x(t))$ and also for the functions $(z_{i}(t))$.
For $(\omega_a(t))$, with $a{\in}\{p,d\}$, we have
  \[
  \diff \left(\Dom{\omega}(t){-}\omega_{a}(t)\right)  = {-}\alpha\left(\Dom{\omega}(t){-}\omega_{a}(t)\right))\diff t{+}\left(C_n(1{+}\ell\Dom{z}(t)){-}n_{0,a}(z(t))\right)\diff t,
  \]
by  Condition~\eqref{Condn}, we obtain
\[
C_n(1{+}\ell\Dom{z}(t)){-}n_{0,a}(z(t))\ge C_n(1{+}\|z(t)\|){-}n_{0,a}(z(t))\ge 0.
\]
Lemma~\ref{lemma:expofilter} gives the relation
  \[
  e^{\alpha t}\left(\Dom{\omega}(t){-}\omega_{a}(t)\right){=}\left(\Dom{\omega}(0){-}\omega_{a}(0)\right)
  {+}\int_0^t   e^{-\alpha s}C_n\left(1{+}\ell\Dom{z}(s){-}n_{0,a}(z(s))\right)\diff s {\ge}0.
  \]
  Finally, again with Lemma~\ref{lemma:expofilter}, Condition~\ref{MCond} and the last inequality, we have, for $t{\ge}0$,
   \[
   \diff (\Dom{w}(t) - w(t)) = -\delta (\Dom{w}(s)-w(s))\diff t + \left(C_M(1+\Dom{\omega}(s)) - M_p(\omega_p(s), w(s))\right)\diff t\\
   \]
   This leads to,
   \[
       w(t) \leq \Dom{w}(t).
   \]
   In the same way, we can prove that,
    \[
       w(t) \geq -\Dom{w}(t).
   \]
The lemma is proved. 
\end{proof}

\begin{proposition}[Coupling]\label{coupprop}
Under the conditions of Section~\ref{defNotSec}, there exists a coupling of $(\Dom{U}(t))$ and $(U(t))$ such that, almost surely,   for all $t{>}0$, $(U(t)){\le}(\Dom{U}(t))$, in particular
    \[
     |W(t)| \leq  \Dom{W}(t), \quad \forall t{\ge}0.
    \]

\end{proposition}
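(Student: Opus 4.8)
The plan is to realise $(U(t))$ and $(\Dom{U}(t))$ on a common probability space, driving \emph{both} systems with the \emph{same} pair of Poisson processes ${\cal P}_1,{\cal P}_2$ of Section~\ref{defNotSec}: the pre-synaptic spikes $\mathcal{N}_\lambda$ are literally shared, and the two post-synaptic processes $\mathcal{N}_{\beta,X}$ and $\mathcal{N}_{\Dom{\beta},\Dom{X}}$ are obtained by thinning the \emph{same} ${\cal P}_2$, a mark $(t,y)$ producing a spike of the original (resp.\ dominating) process when $y{\le}\beta(X(t{-}))$ (resp.\ $y{\le}\Dom{\beta}(\Dom{X}(t{-}))$). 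With this construction I would prove that $\{U(t){\le}\Dom{U}(t),\ \forall t\}$ has probability one by induction over the successive jump times $0{=}T_0{<}T_1{<}\cdots$ of the superposition $\mathcal{N}_\lambda{+}\mathcal{N}_{\Dom{\beta},\Dom{X}}$. First note that all four coordinates of $(\Dom{U}(t))$ stay non-negative (each starts non-negative and receives only non-negative drift and jumps), so in particular $\Dom{X}(t){\ge}0$ and $\Dom{W}(t){\ge}0$.

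Between two consecutive jumps the coordinates of both processes follow the deterministic flows written just before Lemma~\ref{LemCoup}; hence, assuming $U(T_n){\le}\Dom{U}(T_n)$, Lemma~\ref{LemCoup} propagates the four inequalities $X{\le}\Dom{X}$, $\max_i Z_i{\le}\Dom{Z}$, $\max_a\Omega_a{\le}\Dom{\Omega}$ and $|W|{\le}\Dom{W}$ throughout $[T_n,T_{n+1})$, in particular up to $T_{n+1}{-}$. It then remains to check that each inequality survives the jump at $T_{n+1}$. At a \emph{pre-synaptic} instant (common to both) the increments are compared coordinate by coordinate at the left-limit values: for $X$ one uses $W(T_{n+1}{-}){\le}|W(T_{n+1}{-})|{\le}\Dom{W}(T_{n+1}{-})$; for $Z$ the bound $k_{1,i}{\le}C_k$; and for $\Omega_a$ the bound $n_{a,1}(z){\le}C_n(1{+}\|z\|){\le}C_n(1{+}\ell\Dom{Z}(T_{n+1}{-}))$, which is exactly the increment of $\Dom{\Omega}$. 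At a \emph{post-synaptic} instant of the dominating process, $\Dom{X}$ carries no jump (its equation in~\eqref{eq:markovmaj} has no $\mathcal{N}_{\Dom{\beta},\Dom{X}}$ term) and $X$ can only \emph{decrease} by $g(X){\ge}0$, so $X{\le}\Dom{X}$ is trivially preserved; meanwhile $\Dom{Z}$ and $\Dom{\Omega}$ jump by $C_k$ and $C_n(1{+}\ell\Dom{Z})$, and whenever the original process also fires there the increments $k_{2,i}(Z){\le}C_k$ and $n_{a,2}(Z){\le}C_n(1{+}\ell\Dom{Z})$ are dominated by these.

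The crux, and the step I expect to be the main obstacle, is the validity of the thinning coupling for the two post-synaptic processes, i.e.\ the implication ``the original fires $\Rightarrow$ the dominating fires'', which holds as soon as $\beta(X(t{-})){\le}\Dom{\beta}(\Dom{X}(t{-}))$ whenever $X(t{-}){\le}\Dom{X}(t{-})$ (and guarantees, along the induction, that the original post-synaptic spikes form a subset of $\mathcal{N}_{\Dom{\beta},\Dom{X}}$). Since $\Dom{X}(t){\ge}0$ and $\Dom{\beta}(x){=}C_\beta(1{+}x)$, the inequality is immediate when $X(t{-}){\ge}0$, because $\beta(X(t{-})){\le}C_\beta(1{+}X(t{-})){\le}C_\beta(1{+}\Dom{X}(t{-}))$, and it is immediate when $X(t{-}){\le}{-}c_\beta$ since then $\beta(X(t{-})){=}0$. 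The delicate regime is $X(t{-}){\in}({-}c_\beta,0)$, where $\beta$ may be positive while $\Dom{X}(t{-})$ is small; controlling it requires using the vanishing of $\beta$ below $-c_\beta$ together with the growth bound~\eqref{Condbeta}, and is exactly where the choice of $\Dom{\beta}$ and of the initial value $\Dom{x}_0{=}\max(x_0,0)$ must be exploited carefully.

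Finally, the induction is legitimate because on the fixed horizon $[0,T]$ the dominating intensity $\lambda{+}\Dom{\beta}(\Dom{X}(t))$ grows at most linearly in $\Dom{X}$, and $\Dom{X}$ is itself governed by the at-most-linearly-growing dynamics of~\eqref{eq:markovmaj}; hence $(\Dom{U}(t))$ is non-explosive and $\mathcal{N}_\lambda{+}\mathcal{N}_{\Dom{\beta},\Dom{X}}$ has finitely many points on $[0,T]$ almost surely, so $T_n{\to}{+}\infty$ and the comparison extends to all $t$. The two-sided bound $|W(t)|{\le}\Dom{W}(t)$ is part of the flow step: $W$ has no jumps, so it is controlled entirely by the ODE comparison of Lemma~\ref{LemCoup}, using $M_a(\omega_a,w){\le}C_M(1{+}\omega_a){\le}C_M(1{+}\Dom{\Omega})$, the monotonicity of $M_a$ in its first variable and $\Dom{W}{\ge}0$; running this comparison for both $\Dom{W}{-}W$ and $\Dom{W}{+}W$ yields $|W(t)|{\le}\Dom{W}(t)$ for all $t{\ge}0$.
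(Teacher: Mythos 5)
Your construction and case analysis are exactly the paper's: both systems are driven by the same pair ${\cal P}_1,{\cal P}_2$, the comparison is propagated between jumps by the deterministic flow of Lemma~\ref{LemCoup}, and at a jump instant one compares increments coordinate by coordinate, distinguishing pre-synaptic instants, common post-synaptic instants, and post-synaptic instants of the dominating process alone. The two-sided bound $|W(t)|\le\Dom{W}(t)$ obtained by running the flow comparison for $\Dom{W}{-}W$ and $\Dom{W}{+}W$ is also the paper's argument.

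The genuine gap is that you stop at the step you yourself identify as the crux. The whole thinning coupling rests on the pointwise intensity domination $\beta(X(t{-}))\le\Dom{\beta}(\Dom{X}(t{-}))$ under the induction hypothesis $X(t{-})\le\Dom{X}(t{-})$; you verify it for $X(t{-})\ge 0$ and for $X(t{-})\le{-}c_\beta$, and then merely announce that the regime $X(t{-})\in({-}c_\beta,0)$ ``must be exploited carefully'' without doing so. This regime is not harmless: if the domination fails there, the original process can emit a post-synaptic spike that the dominating process does not, in which case $Z$ jumps by $k_2(Z(t{-}))$ while $\Dom{Z}$ does not jump, and the inequality $\max_i Z_i\le\Dom{Z}$ --- hence the entire induction --- collapses. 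The paper closes this step in one line via the chain $\beta(x)\le\Dom{\beta}(x)\le\Dom{\beta}(\Dom{x})$ for $x\le\Dom{x}$, i.e.\ it uses the pointwise comparison $\beta\le\Dom{\beta}$ on all of $\R$ together with the monotonicity of $\Dom{\beta}$ and $\Dom{X}\ge 0$. Note that for $x{<}0$ the inequality $\beta(x)\le C_\beta(1{+}x)$ is strictly more than Relation~\eqref{Condbeta} provides (which only gives $C_\beta(1{+}|x|)$), so to complete your argument you must either take this pointwise domination as the operative hypothesis, as the paper implicitly does, or supply an additional assumption (for instance $\beta$ non-decreasing, or $\beta\le C_\beta$ on $({-}c_\beta,0)$). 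As written, your proposal reproduces the paper's proof minus its one load-bearing inequality.
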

\begin{proof}
  All we have to prove is that if $U(0){\le}\Dom{U}(0)$ and if $\tau$ is the first jump of either $(U(t))$ or $(\Dom{U}(t))$, then $U(t){\le}\Dom{U}(t)$ for $t{\le}\tau$.
  Our statement is then easily proved by induction on the sequence of jumps of both processes.

  Since $(\Dom{U}(t))$ and $(U(t))$ are governed by the deterministic ODEs of Lemma~\ref{LemCoup}, the relation $U(t){\le}\Dom{U}(t)$ holds for $0{\le}t{<}\tau$.
  The processes $(W(t))$ and $(\Dom{W}(t))$ being continuous, $|W(\tau)|{=}|W(\tau{-})|{\le}\Dom{W}(\tau{-}){=}\Dom{W}(\tau)$.
  
    The instant of jump $\tau$ is the minimum of $\tau_1$, $\tau_2$ and $\Dom{\tau}_2$, with
    \[
        \begin{cases}
            \tau_1=\inf\{t{>}0: {\cal N}_\lambda((0,t]){\ne}0\},\\
            \displaystyle  \tau_2=\inf\left\{t{>}0: {\cal N}_{\beta,X}((0,t]){=}\int_{(0,t]}{\cal P}_2\left(\rule{0mm}{4mm}\left(\rule{0mm}{3mm}0,\beta(X(s{-})\right],\diff s\right){\ne}0\right\}, \\
            \displaystyle  \Dom{\tau}_2=\inf\left\{t{>}0: {\cal N}_{\Dom{\beta},\Dom{X}}((0,t]){=}\int_{(0,t]}{\cal P}_2\left(\rule{0mm}{4mm}\left(\rule{0mm}{3mm}0,\Dom{\beta}(\Dom{X}(s{-}))\right],\diff s\right){\ne}0\right\}.
        \end{cases}
    \]
    Since, for $x{\le}\Dom{x}$, $\beta(x){\le}\Dom{\beta}(x){\le}\Dom{\beta}(\Dom{x})$ is a non-decreasing function and that $X{\le}\Dom{X}$ holds until the first jump, the inequality $\Dom{\tau}_2{\le}\tau_2$ holds  almost surely.

 If $\tau_1{<}\Dom{\tau}_2$, then
 \[
    X(\tau){=}X(\tau{-}){+}W(\tau{-}){\le}\Dom{X}(\tau{-}){+}\Dom{W}(\tau{-}){=}\Dom{X}(\tau).
 \]

 For $i{\in}\{1,\ldots,\ell\}$,
    \[
    Z_{i}(\tau){=}Z_{i}(\tau{-}){+}k_{i}(Z(\tau{-})) \le\Dom{Z}(\tau{-}){+}C_k{=} \Dom{Z}(\tau)
    \]
    and
    \begin{multline*}
      \Omega_{a}(\tau)=\Omega_{a}(\tau{-}){+}n_{a,1}(Z(\tau-))\leq
      \Omega_{a}(\tau{-}){+}C_n\left(1{+}\|Z(\tau)\|\right)\\
      \leq  \Dom{\Omega}(\tau{-}){+}C_n\left( 1 {+} \ell\Dom{Z}(\tau)\right) = \Dom{\Omega}(\tau).
    \end{multline*}
    Thus we have $U(\tau){\le}\Dom{U}(\tau)$.
    The same arguments work in a similar way when $\tau_2{=}\Dom{\tau}_2{<}\tau_1$.

    In this case, we have
    \[
    X(\tau){=}X(\tau{-}){-}g(X(\tau{-})) \leq \Dom{X}(\tau{-}){=}\Dom{X}(\tau).
    \]
    The last case $\Dom{\tau}_2{<}\min(\tau_2,\tau_1)$ is not more difficult, since the components of $(U(t))$ do not experience jumps and those of $(\Dom{U}(t))$ have positive jumps due to ${\cal N}_{\beta,\Dom{X}}$.
    The proposition is proved.
\end{proof}

The process  $(\Dom{U}_\eps(t))$ is defined by the SDEs,
\begin{equation}\label{SDELin}
\begin{cases}
    \diff \Dom{X}_\eps(t) &\displaystyle =  {-}\Dom{X}_\eps(t)\diff t/\eps+\Dom{W}_\eps(t)\mathcal{N}_{\lambda/\eps}(\diff t),\\
    \diff \Dom{Z}_{\eps}(t) &\displaystyle =     \left({-}\gamma \Dom{Z}_{\eps}(t) + C_k\right)\diff t/\eps {+}C_k \mathcal{N}_{\lambda/\eps}(\diff t){+}C_k\mathcal{N}_{\Dom{\beta}/\eps,\Dom{X}_\eps}(\diff t),\\
    \diff \Dom{\Omega}_{\eps}(t)&\displaystyle= {-}\alpha\Dom{\Omega}_{\eps}(t)\diff t{+}C_n\left(1{+}\ell \Dom{Z}_{\eps}(t)\right)\diff t
  \\&\hspace{1cm}+C_n\left(1{+}\ell \Dom{Z}_{\eps}(t{-})\right)\left(\eps \mathcal{N}_{\lambda/\eps}(\diff t){+}\eps \mathcal{N}_{\Dom{\beta}/\eps,\Dom{X}_\eps}(\diff t)\right)\\
    \diff \Dom{W}_\eps(t) &\displaystyle =  C_M\left(1+\Dom{\Omega}_{\eps}(t)\right)\diff t,
\end{cases}
\end{equation}
and with $\Dom{U}_\eps(0){=}\Dom{U}(0){=}(\Dom{x}_0,\Dom{z}_0,\Dom{\omega}_0,\Dom{w}_0)$.

The corresponding infinitesimal generator $\Dom{B}^F_w$ defined by Relation~\eqref{BFGen} is in this case, for $v{=}(x,z)$,
\begin{multline}\label{BFGen2}
\Dom{B}^F_w(f)(v)\steq{def}{-}x\frac{\partial f}{\partial x}(v){+}({-}\gamma z{+}C_k)\frac{\partial f}{\partial z}(v)
\\+\lambda \left(\rule{0mm}{4mm}f(v{+}we_1{+}C_k e_2){-}f(v)\right)
+C_{\beta}(x{+}1)\left(\rule{0mm}{4mm}f(v{+}C_k e_2){-}f(v)\right),
\end{multline}
where $e_1{=}(1,0)$ and $e_2{=}(0,1)$.
\section{Asymptotic Results for the Truncated Process}\label{OccSec}
In this section we study the scaling properties of fast processes of $(\Dom{U}(t))$ defined by Relation~\eqref{eq:markovmaj}.
In this section, we fix $K{>}0$ and consider an analogue process for which the impact of the synaptic weight is truncated at $K$. In Section~\ref{OccSec:3} an averaging principle will be established for this process as a first step in the proof of the main result of the paper.

\subsection{Definition of the Truncated Process}
\label{OccSec:1}

We define  $(\Dom{U}^{K}(t))$ as  the solution of the SDEs,
\begin{equation}\label{SDELinTrunc}
\begin{cases}
    \diff \Dom{X}^{K}_\eps(t) &\displaystyle =  {-}\Dom{X}^{K}_\eps(t)\diff t/\eps+K{\wedge}\Dom{W}^{K}_\eps(t)\mathcal{N}_{\lambda/\eps}(\diff t),\\
    \diff \Dom{Z}^{K}_{\eps}(t) &\displaystyle =     \left({-}\gamma \Dom{Z}^{K}_{\eps}(t) +C_k\right)\diff t/\eps {+}C_k\mathcal{N}_{\lambda/\eps}(\diff t){+}C_k\mathcal{N}_{\Dom{\beta}/\eps,\Dom{X}^{K}_\eps}(\diff t),\\
    \diff \Dom{\Omega}^{K}_{\eps}(t)&\displaystyle= {-}\alpha\Dom{\Omega}^{K}_{\eps}(t)\diff t{+}C_n\left(1{+}\ell \Dom{Z}^{K}_{\eps}(t)\right)\diff t
  \\&\hspace{1cm}+C_n\left(1{+}\ell\Dom{Z}^{K}_{\eps}(t{-})\right)\left(\eps \mathcal{N}_{\lambda/\eps}(\diff t){+}\eps \mathcal{N}_{\Dom{\beta}/\eps,\Dom{X}^{K}_\eps}(\diff t)\right)\\
    \diff \Dom{W}^{K}_\eps(t) &\displaystyle =  C_M\left(1+\Dom{\Omega}^{K}_{\eps}(t)\right)\diff t,
\end{cases}
\end{equation}
and with $\Dom{U}^{K}_\eps(0){=}\Dom{U}(0){=}(\Dom{x}_0,\Dom{z}_0,\Dom{\omega}_0,\Dom{w}_0)$.

We begin with a lemma giving a stochastic upper bound of $(\Dom{X}^K_{\eps}(t))$ in terms of a standard shot-noise process.

\begin{lemma}\label{lemma:majX}
 There exists a constant $C_X{>}0$ independent of $\eps$  such that the  relation
\begin{equation}\label{eqpp2}
\Dom{X}^K_\eps(t)\leq C_X{+}KS_{\eps}(t)
\end{equation}
holds for  $t{\geq}0$, where $(S_\eps(t))$ is the shot-noise process of Definition~\ref{SNPdef}.

For any $\eta{>}0$, there exists a compact subset ${\cal K}$ of $\R_+^2$  such that,
    \[
 \sup_{\substack{0{<}\eps{<}1\\0{\leq}t{\leq}T}}\P\left(\left(\Dom{X}^K_{\eps}(t),\Dom{Z}^K_{\eps}(t)\right){\notin}{\cal K}  \right)\leq \eta.
    \]
\end{lemma}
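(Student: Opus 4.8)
The plan is to prove the two assertions in sequence. The first inequality~\eqref{eqpp2} is a pathwise domination that I would obtain by comparing $(\Dom{X}^K_\eps(t))$ with an auxiliary shot-noise process driven by the \emph{same} pre-synaptic Poisson input, exploiting the fact that the only upward jumps of $(\Dom{X}^K_\eps(t))$ come from $\mathcal{N}_{\lambda/\eps}$ and have size $K{\wedge}\Dom{W}^K_\eps(t)\le K$. The second assertion is a tightness-at-fixed-time statement for the fast pair $(\Dom{X}^K_\eps(t),\Dom{Z}^K_\eps(t))$, which I would derive from uniform-in-$(\eps,t)$ moment bounds obtained via the dominating shot-noise processes, combined with Markov's inequality.

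\textbf{Proof of the pathwise bound~\eqref{eqpp2}.} First I would note that, by Definition~\ref{SNPdef}, the process $(KS_\eps(t))$ solves $\diff (KS_\eps(t))={-}KS_\eps(t)\diff t/\eps + K\,\mathcal{N}_{\lambda/\eps}(\diff t)$, so it has downward drift toward $0$ at rate $1/\eps$ and upward jumps of size exactly $K$ at each atom of $\mathcal{N}_{\lambda/\eps}$. The process $(\Dom{X}^K_\eps(t))$ has the same drift rate $1/\eps$ toward $0$, and at each atom of $\mathcal{N}_{\lambda/\eps}$ it jumps by $K{\wedge}\Dom{W}^K_\eps(t)\le K$; moreover it has \emph{no} other upward jumps (the post-synaptic point process $\mathcal{N}_{\Dom{\beta}/\eps,\Dom{X}^K_\eps}$ does not appear in the $\Dom{X}^K_\eps$ equation of~\eqref{SDELinTrunc}). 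Since both processes are driven by the common point process $\mathcal{N}_{\lambda/\eps}$, a jump-by-jump comparison shows that the difference $KS_\eps(t){-}\Dom{X}^K_\eps(t)$ is nonincreasing between jumps (equal relaxation rates preserve order) and does not decrease at jumps (the jump of $KS_\eps$ dominates that of $\Dom{X}^K_\eps$). Taking into account the initial values $S_\eps(0)=0$ and $\Dom{X}^K_\eps(0)=\Dom{x}_0$, I would set $C_X{=}\Dom{x}_0$ (or the corresponding relaxed bound $\Dom{x}_0 e^{-t/\eps}\le \Dom{x}_0$) so that~\eqref{eqpp2} holds for all $t\ge 0$.

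\textbf{Proof of the tightness statement.} For the marginal tightness, I would produce a compact set of the form ${\cal K}=[0,a]{\times}[0,b]^{\ell}$ and bound the escape probability using Markov's inequality on suitable moments. For the $\Dom{X}$-coordinate, I would combine~\eqref{eqpp2} with the uniform exponential bound~\eqref{eqBouExp} of Proposition~\ref{lemma:Slambda} (taking $\xi{>}0$): this gives $\sup_{0<\eps<1,\,0\le t\le T}\E[e^{\xi \Dom{X}^K_\eps(t)}]\le e^{\xi C_X}\sup\E[e^{\xi K S_\eps(t)}]<{+}\infty$, so $\P(\Dom{X}^K_\eps(t)>a)$ is uniformly small for $a$ large. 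For the $\Dom{Z}$-coordinate, I would dominate each component by a shot-noise process: since $k_1,k_2$ are replaced by the constant $C_k$ in~\eqref{SDELinTrunc}, $(\Dom{Z}^K_\eps(t))$ receives bounded jumps $C_k$ at atoms of $\mathcal{N}_{\lambda/\eps}$ and of $\mathcal{N}_{\Dom{\beta}/\eps,\Dom{X}^K_\eps}$, and relaxes at rate $\gamma/\eps$. The pre-synaptic contribution is again controlled by a shot-noise process of type $(S_\eps)$; the delicate term is the post-synaptic one, whose intensity $\Dom{\beta}(\Dom{X}^K_\eps(t{-}))/\eps=C_\beta(1{+}\Dom{X}^K_\eps(t{-}))/\eps$ is itself random and grows with $\Dom{X}^K_\eps$, which is precisely the process $(R_\eps)$ of Relation~\eqref{Reps} after rescaling by $K$ and $C_k$. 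Using~\eqref{eqpp2} to bound this intensity by $C_\beta(1{+}C_X{+}KS_\eps(t{-}))/\eps$, I would compare $(\Dom{Z}^K_\eps(t))$ with a linear combination of $(S_\eps)$ and $(R_\eps)$-type processes and extract a uniform first-moment (or second-moment) bound $\sup_{0<\eps<1,\,0\le t\le T}\E[\Dom{Z}^K_\eps(t)]<{+}\infty$ from the tightness of their time-integrals in Proposition~\ref{Fam1} together with the stationary-type estimates for shot-noise marginals. Markov's inequality then yields $\P(\Dom{Z}^K_\eps(t)\notin[0,b]^\ell)$ uniformly small for $b$ large, and choosing $a,b$ so that each escape probability is below $\eta/2$ gives the claimed compact ${\cal K}$.

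\textbf{Main obstacle.} The delicate point is the uniform-in-$(\eps,t)$ control of the $\Dom{Z}^K_\eps$-marginal through the post-synaptic channel, because its jump intensity depends on $\Dom{X}^K_\eps$ and therefore couples the two coordinates: a naive moment computation produces a Grönwall-type inequality in which the $1/\eps$ factors in the drift and intensity must be shown to cancel so that the bound stays finite as $\eps\to 0$. I expect to handle this exactly as in the shot-noise analysis underlying Proposition~\ref{Fam1}, where the fast relaxation at rate $\gamma/\eps$ balances the sped-up arrival rate, so that the stationary scale of the marginal is $O(1)$ uniformly in $\eps$; the key input is that $K$ is fixed, which keeps the effective jump sizes and the intensity-growth constant bounded independently of $\eps$.
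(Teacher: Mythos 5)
Your proposal is correct and follows essentially the same route as the paper: the pathwise bound~\eqref{eqpp2} is exactly what the explicit shot-noise representation of Lemma~\ref{lemma:expofilter} gives after bounding $K{\wedge}\Dom{W}^{K}_\eps(u{-})$ by $K$ (so $C_X{=}\Dom{x}_0$), and the compact-containment statement follows from uniform-in-$(\eps,t)$ moment bounds plus Markov's inequality. The only difference is that the paper is more economical on the $\Dom{Z}$-coordinate --- it performs a direct first-moment computation using the conditional intensity $C_\beta(1{+}\Dom{X}^{K}_\eps(s))/\eps$ against the kernel $e^{-\gamma(t-s)/\eps}$ and the already-established bound on $\E[\Dom{X}^{K}_\eps(s)]$, with no need for exponential moments, for the process $(R_\eps)$, or for Proposition~\ref{Fam1} (whose tightness of time-integrals would not, by itself, deliver a pointwise marginal bound anyway).
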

\begin{proof}
With Relation~\eqref{eqSN}, we have, for $t{\ge}0$,
\[
\Dom{X}^K_\eps(t) = x_0e^{-t}{+}\int_0^t e^{{-}(t-u)/\eps}K{\wedge}W_\eps(u{-})\mathcal{N}_{\lambda/\eps}(\diff u).
\]
Which gives, for $s{\le}t{\le}T$,
\[
 \Dom{X}^K_\eps(t)\leq x_0e^{-t/\eps}{+}K\int_0^t e^{-(t-u)/\eps}{\cal N}_{\lambda/\eps}(\diff u)\le x_0{+}KS_{\eps}(t),
\]
and therefore
\[
\E\left[\Dom{X}^K_\eps(t)\right]\leq C_X{+}K\E\left[S_{\eps}(t)\right]\leq C_X{+}\lambda K T.
\]
Relation~\eqref{eqSN} gives the inequality
 \[
 \E\left[\Dom{Z}^K_{\eps}(t)\right] \leq z_0{+} C_k\int_0^t \exp(-\gamma (t-s))\left(1 {+} \lambda {+} C_\beta(1{+}\E\left[\Dom{X}^K_\eps(s)\right])\right)\diff s
    \]
    which leads to,
    \[
      \sup_{\substack{0{<}\eps{<}1\\0{\leq}t{\leq}T}}  \E\left[ \Dom{Z}^K_{\eps}(t)\right] < +\infty.
    \]
We conclude by using Markov's inequality.
\end{proof}

\subsection{Tightness of the Truncated Process}
\label{OccSec:2}
The next important lemma is used to prove tightness properties of the processes $(\Omega_\eps(t))$.
\begin{lemma}[Tightness of Linear Functionals of the Fast Processes]\label{ITight}
The family of processes
\[
\left(\int_0^t \Dom{X}_\eps^K(u)\diff u\right),
\left(\int_0^t \Dom{Z}_\eps^K(u)\diff u\right),
\left(\int_0^t \Dom{X}_\eps^K(u)\Dom{Z}_\eps^K(u)\diff u\right), \;\eps{\in}(0,1),
\]
are tight for the convergence in distribution. The processes
\begin{align*}
(\Dom{M}^K_{\eps,1}(t))&{\steq{def}}\left(\int_0^t \Dom{Z}^K_\eps(u{-})\left[\eps{\cal N}_{\lambda/\eps}(\diff u){-} \lambda\diff u\right]\right)\\
(\Dom{M}^K_{\eps,2}(t))&{\steq{def}}\left(\int_0^t \Dom{Z}^K_\eps(u{-})\left[\rule{0mm}{4mm}\eps{\cal N}_{\Dom{\beta}/\eps,\Dom{X}^K_{\eps}}(\diff u){-} \Dom{\beta}\left(\Dom{X}^K_{\eps}(u)\right)\diff u\right]\right)
\end{align*}
converge in distribution to $0$ as $\eps$ goes to $0$. 
\end{lemma}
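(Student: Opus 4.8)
The plan is to reduce everything to the shot-noise estimates already available, namely the uniform exponential moment bound of Proposition~\ref{lemma:Slambda} and the tightness of the linear functionals in Proposition~\ref{Fam1}, transferred to the truncated coordinates $(\Dom{X}^K_\eps,\Dom{Z}^K_\eps)$ through the domination of Lemma~\ref{lemma:majX}. First I would record uniform moment bounds. From $\Dom{X}^K_\eps(t){\leq} C_X{+}KS_\eps(t)$ and $\sup_{\eps,t\leq T}\E[e^{\xi S_\eps(t)}]{<}\infty$, the process $(\Dom{X}^K_\eps(t))$ has all moments bounded uniformly in $\eps{\in}(0,1)$ and $t{\leq} T$. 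For $(\Dom{Z}^K_\eps(t))$, its jumps of fixed size $C_k$ are driven by $\mathcal{N}_{\lambda/\eps}$ and by $\mathcal{N}_{\Dom{\beta}/\eps,\Dom{X}^K_\eps}$, whose intensity $\Dom{\beta}(\Dom{X}^K_\eps)/\eps{=}C_\beta(1{+}\Dom{X}^K_\eps)/\eps$ is dominated, by the previous bound, by $C_\beta(1{+}C_X{+}KS_\eps)/\eps$. A coupling of the point processes at ordered intensities, as in Section~\ref{section:coupling}, then bounds $(\Dom{Z}^K_\eps)$ pathwise by an affine combination of a shot-noise process of type $S_\eps$ and of the process $R_\eps$ of Relation~\eqref{Reps}, up to scaling constants. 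Combined with Proposition~\ref{Fam1}, this yields $\sup_{\eps,t\leq T}\E[\Dom{Z}^K_\eps(t)^p]{<}\infty$ for the orders $p$ needed below and, by Cauchy--Schwarz together with the exponential moments of $S_\eps$, the mixed bounds $\sup_{\eps,t\leq T}\E[\Dom{X}^K_\eps(t)^a\Dom{Z}^K_\eps(t)^b]{<}\infty$.

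For the tightness of the three integrated functionals I would use the Aldous criterion. Each is a continuous non-decreasing process vanishing at $0$, so tightness follows from tightness at a fixed time (immediate from the uniform first-moment bounds and Markov's inequality) together with the estimate that, for any stopping times $\tau_\eps{\leq} T$ and $\delta_\eps{\to}0$, the increment over $[\tau_\eps,\tau_\eps{+}\delta_\eps]$ tends to $0$ in probability. For the integrand $F_\eps(u){\in}\{\Dom{X}^K_\eps(u),\Dom{Z}^K_\eps(u),\Dom{X}^K_\eps(u)\Dom{Z}^K_\eps(u)\}$ I would write
\[
\E\left[\int_{\tau_\eps}^{\tau_\eps+\delta_\eps}\hspace{-3mm} F_\eps(u)\diff u\right]\leq\left(\int_0^T\E[F_\eps(u)^2]\diff u\right)^{1/2}\left(\int_0^T\P(\tau_\eps{\leq} u{\leq}\tau_\eps{+}\delta_\eps)\diff u\right)^{1/2}\leq (C_2T)^{1/2}\delta_\eps^{1/2},
\]
using the uniform second-moment bounds $\sup_{\eps,u}\E[F_\eps(u)^2]{\leq} C_2$ from the first step; the right-hand side vanishes. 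Since the increments vanish, every limit point is continuous. Alternatively, the same conclusion follows by dominating the three integrands by affine and bilinear combinations of $S_\eps,R_\eps$ and invoking the tightness of $\int S_\eps$, $\int S_\eps^2$ and $\int R_\eps S_\eps$ from Proposition~\ref{Fam1}.

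For the martingale terms, $(\Dom{M}^K_{\eps,1}(t))$ and $(\Dom{M}^K_{\eps,2}(t))$ are $L^2$-martingales, being the integrals of the predictable integrand $(\Dom{Z}^K_\eps(u{-}))$ against the compensated measures $\eps\mathcal{N}_{\lambda/\eps}(\diff u){-}\lambda\diff u{=}\eps(\mathcal{N}_{\lambda/\eps}(\diff u){-}(\lambda/\eps)\diff u)$ and $\eps\mathcal{N}_{\Dom{\beta}/\eps,\Dom{X}^K_\eps}(\diff u){-}\Dom{\beta}(\Dom{X}^K_\eps(u))\diff u$. Their predictable quadratic variations are
\[
\croc{\Dom{M}^K_{\eps,1}}_T=\eps\lambda\int_0^T\Dom{Z}^K_\eps(u)^2\diff u,\qquad
\croc{\Dom{M}^K_{\eps,2}}_T=\eps\int_0^T\Dom{Z}^K_\eps(u)^2\,\Dom{\beta}(\Dom{X}^K_\eps(u))\diff u,
\]
each carrying a factor $\eps$, since the jumps are of order $\eps$ while the compensators are of order $1/\eps$. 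Taking expectations and using the moment bounds of the first step (for the second term, $\Dom{\beta}(\Dom{X}^K_\eps){=}C_\beta(1{+}\Dom{X}^K_\eps)$ and the mixed moment $\E[\Dom{Z}^K_\eps(u)^2\Dom{X}^K_\eps(u)]$ is bounded), I obtain $\E[\croc{\Dom{M}^K_{\eps,i}}_T]{=}O(\eps)$. Doob's $L^2$ maximal inequality then gives $\E[\sup_{t\leq T}|\Dom{M}^K_{\eps,i}(t)|^2]{=}O(\eps){\to}0$, hence the convergence in distribution to $0$.

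The hard part is the first step, the uniform control of the moments and mixed moments of $(\Dom{Z}^K_\eps)$: since the intensity of the point process $\mathcal{N}_{\Dom{\beta}/\eps,\Dom{X}^K_\eps}$ feeding $(\Dom{Z}^K_\eps)$ is itself the random, shot-noise-driven quantity $\Dom{\beta}(\Dom{X}^K_\eps)$, one is genuinely in the interacting shot-noise regime, and it is precisely here that the coupling of Section~\ref{section:coupling} and the tightness of $\int R_\eps S_\eps$ from Proposition~\ref{Fam1} are indispensable. The cubic functional $\int\Dom{Z}^K_\eps(u)^2\Dom{X}^K_\eps(u)\diff u$ controlling $\croc{\Dom{M}^K_{\eps,2}}_T$ is the most delicate quantity to bound uniformly in $\eps$.
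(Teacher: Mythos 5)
Your proposal is correct and follows essentially the same route as the paper: domination of $\Dom{X}^K_\eps$ by $C_X{+}KS_\eps$, pathwise domination of $\Dom{Z}^K_\eps$ by affine combinations of $S_\eps$ and $R_\eps$, tightness of the integrated functionals reduced to Proposition~\ref{Fam1}, and the $O(\eps)$ predictable quadratic variations together with Doob's inequality for the martingale terms. The only slip is attributing the uniform pointwise moment bounds of $\Dom{Z}^K_\eps$ to Proposition~\ref{Fam1} (which concerns integrated functionals); they come instead from the exponential moments of $S_\eps$ in Relation~\eqref{eqBouExp} and the fourth-moment bound for $R_\eps$ of Proposition~\ref{R4Mom}, which is exactly what the paper invokes to control $\E[\Dom{Z}^K_\eps(u)^4]$ and the mixed term $\E[\Dom{Z}^K_\eps(u)^2\Dom{X}^K_\eps(u)]$.
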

\begin{proof}
Relation~\eqref{eqpp2} gives for $0{\le}s{\le}t$,
  \[
  \int_s^t \Dom{X}_\eps^K(u)\diff u\leq C_X(t{-}s)+K\int_s^t S_\eps(u)\diff u,
  \]
The tightness of the three processes results from this relation and Proposition~\ref{Fam1}.

Indeed, Relation~\eqref{eqSN} shows that, for $t{\ge}0$,
\begin{align*}
  \Dom{Z}_\eps^K(t){-}z_0&=C_k\int_0^t e^{-\gamma(t-s)/\eps} \diff s+C_k\int_0^t e^{-\gamma(t-s)/\eps}{\cal N}_{\lambda/\eps}(\diff s)
  \\&\hspace{1cm}+C_k\int_0^t e^{-\gamma(t-s)/\eps}{\cal P}_2\left(\left(0,C_\beta\frac{1{+}\Dom{X}^K_\eps(s)}{\eps}\right],\diff s\right)\\
&\le \frac{C_k}{\gamma} + C_k\int_0^t e^{-\gamma(t-s)/\eps}{\cal N}_{\lambda/\eps}(\diff s)
  \\&\hspace{8mm}+ C_k\int_0^t e^{-\gamma(t-s)/\eps}{\cal P}_2\left(\left(0,C_\beta\frac{1{+}C_X}{\eps}\right],\diff s\right)\\
&\hspace{8mm}    + C_k\int_0^t e^{-\gamma(t-s)/\eps}{\cal P}_2\left(\left(C_\beta\frac{1{+}C_X}{\eps},C_\beta\frac{1{+}C_X}{\eps}{+}C_\beta K\frac{S_\eps(s)}{\eps}\right],\diff s\right).
\end{align*}
The first two terms of the right-hand side of last relation are, up to the constants $\gamma$ instead of $1$ and $C_\beta(1{+}C_X)$ instead of $\lambda$,  equal to $S_\eps(t)$.
Similarly, up to the constant $C_\beta K$ of $S_{\eps}(t)$ instead of $1$, the last term is equal to $R_\eps(t)$.

The two processes $(\Dom{M}^K_{\eps,i}(t))$, $i{=}\{1,2\}$ are martingales with previsible increasing processes
  \[
  \left(\eps\lambda \int_0^t \Dom{Z}^K_\eps(u)^2\diff u\right)\text{ and } 
  \left(\eps\int_0^t \Dom{Z}^K_\eps(u)^{2}C_{\beta}\left(1{+}\Dom{X}^K_{\eps}(u)\right)\diff u\right).
  \]
For $t{\ge}0$, we have
\[
    \E\left[\Dom{M}^K_{\eps,1}(t)^2\right] \leq \int_0^t \E\left[\Dom{Z}^K_\eps(u)^2\right]\diff u,
\]
and,
\begin{multline*}
  \E\left[\Dom{M}^K_{\eps,2}(t)^2\right]{=}\eps C_\beta\left(\E\left[\Dom{M}^K_{\eps,1}(t)^2\right]+ \int_0^t \E\left[\Dom{Z}^K_\eps(u)^2\Dom{X}^K_{\eps}(u)\right]\diff u\right)\\
\le
  \eps C_\beta\int_0^t\left(\E\left[\Dom{Z}^K_\eps(u)^2\right] + \sqrt{\E\left[\Dom{Z}^K_\eps(u)^4\right]}\sqrt{\E\left[\Dom{X}^K_{\eps}(u)^2\right]}\right)\diff u,
\end{multline*}
 with Cauchy-Schwartz' inequality.

 Using  the upper-bounds for  $(\Dom{X}^K_\eps(t))$ and $(\Dom{Z}^K_\eps(t))$ and Relation~\eqref{eqBouExp} for $S_{\eps}$ and Proposition~\ref{R4Mom} of the Appendix for $R_{\eps}$, we obtain that the quantity $\E\left[\Dom{M}^K_{\eps,2}(t)^2\right]$ converges to $0$ as $\eps$ goes to $0$.
 The last statement of the lemma follows from Doob's inequality.
\end{proof}

We now define the associated occupation measure $\Dom{\nu}^K_{\eps}$ in the same way as in Section~\ref{OccMeas}. Let $G$ be a non-negative Borelian function on $[0,T]{\times}\R^2_+$,  define $\Dom{\nu}^K_\eps$ the non-negative measure on $[0,T]{\times}\R^2_+$ by
\[
\int_{[0,T]{\times}\R^2_+}\hspace{-3mm} G(s,x,z)\Dom{\nu}^K_{\eps}(\diff s,\diff x,\diff z)\steq{def}\int_{[0,T]} G\left(s,\Dom{X}^K_\eps(s),\Dom{Z}^K_\eps(s)\right)\diff s.
\]
\begin{lemma}\label{TOccM}
The family of random Radon measures $\Dom{\nu}^K_{\eps}$, $\eps{\in}(0,1)$, is tight for the convergence in distribution and for any bounded Borelian function on $[0,T]{\times}\R_+^2$, the set of processes
  \[
(I_G(t))\steq{def} \left(\int_0^t G\left(u, \Dom{X}^K_{\eps}(u), \Dom{Z}^K_{\eps}(u)\right)\,\diff u,0{\le}t{\le}T\right)
  \]
is tight for the convergence in distribution.
\end{lemma}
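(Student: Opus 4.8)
The plan is to establish the two assertions separately, both reducing to the uniform compactness estimate furnished by the second part of Lemma~\ref{lemma:majX}. For the family of occupation measures, I would first record the structural fact that the first marginal of $\Dom{\nu}^K_\eps$ on $[0,T]$ is Lebesgue measure, so that its total mass equals $T$ deterministically for every $\eps{\in}(0,1)$. Tightness of $\{\Dom{\nu}^K_\eps\}$ in the space $\mathcal{M}_f([0,T]{\times}\R_+^2)$ of finite measures equipped with the weak topology then follows from a standard criterion for random measures: since the total masses are uniformly bounded, it suffices to exhibit, for each $\eta{>}0$, a compact set $\mathcal{C}{\subset}[0,T]{\times}\R_+^2$ with $\sup_\eps\E[\Dom{\nu}^K_\eps(\mathcal{C}^c)]{\le}\eta$. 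Indeed, combined with the fixed total mass, this bound yields uniform tightness in probability through Markov's inequality, which is exactly the input Prokhorov's theorem requires.

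The required compacts come directly from Lemma~\ref{lemma:majX}. Given $\eta{>}0$, I would choose a compact $\mathcal{K}{\subset}\R_+^2$ with $\sup_{\eps,t}\P((\Dom{X}^K_\eps(t),\Dom{Z}^K_\eps(t)){\notin}\mathcal{K}){\le}\eta/T$ and set $\mathcal{C}{=}[0,T]{\times}\mathcal{K}$, which is compact and whose complement is $[0,T]{\times}\mathcal{K}^c$. Then, by Fubini's theorem,
\[
\E\left[\Dom{\nu}^K_\eps(\mathcal{C}^c)\right]=\int_0^T\P\left((\Dom{X}^K_\eps(s),\Dom{Z}^K_\eps(s)){\notin}\mathcal{K}\right)\diff s\le\eta,
\]
uniformly in $\eps{\in}(0,1)$. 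This is precisely the hypothesis of the tightness criterion, and the tightness of $\{\Dom{\nu}^K_\eps\}$ follows.

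For the processes $(I_G(t))$ with $G$ bounded, I expect the argument to be essentially deterministic and considerably softer. Writing $\|G\|_\infty{<}{+}\infty$ for its supremum norm, every realization satisfies $I_G(0){=}0$ and, for $0{\le}s{\le}t{\le}T$,
\[
|I_G(t){-}I_G(s)|\le\int_s^t\left|G\left(u,\Dom{X}^K_\eps(u),\Dom{Z}^K_\eps(u)\right)\right|\diff u\le\|G\|_\infty\,(t{-}s),
\]
so each path is Lipschitz with constant $\|G\|_\infty$ and bounded by $\|G\|_\infty T$. Hence all paths lie in the fixed set $\{f{\in}\mathcal{C}([0,T],\R):f(0){=}0,\ \mathrm{Lip}(f){\le}\|G\|_\infty\}$, which is compact in $\mathcal{C}([0,T],\R)$ by the Arzel\`a--Ascoli theorem. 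The laws of $(I_G(t))$ are therefore carried by a common compact subset, and the family is automatically tight in $\mathcal{D}([0,T],\R)$.

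The only genuinely delicate point is selecting the correct tightness criterion for random measures and verifying that no mass escapes to infinity uniformly in $\eps$; both are resolved by Lemma~\ref{lemma:majX}. The boundedness of $G$ trivializes the second assertion through a deterministic modulus-of-continuity bound, in sharp contrast with the linearly growing functionals of Lemma~\ref{ITight}, where the shot-noise estimates of Proposition~\ref{Fam1} are genuinely indispensable.
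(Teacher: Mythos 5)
Your proposal is correct and follows essentially the same route as the paper: both parts reduce to the uniform compact-containment estimate of Lemma~\ref{lemma:majX}, with the measure tightness obtained from the first-moment bound $\E[\Dom{\nu}^K_\eps([0,T]{\times}{\cal K}^c)]{\le}T\sup_{\eps,t}\P((\Dom{X}^K_\eps(t),\Dom{Z}^K_\eps(t)){\notin}{\cal K})$ (the paper invokes Lemma~1.3 of~\citet{karatzas_averaging_1992} where you phrase the same criterion via the fixed total mass $T$), and the process tightness from the deterministic Lipschitz bound $|I_G(t){-}I_G(s)|{\le}\|G\|_\infty|t{-}s|$.
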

\begin{proof}
For $a{>}0$, $\eps{\in}(0,1)$, and ${\cal K}$  a Borelian subset of $\R_+^2$, we have 
\begin{multline*}
\P\left(\rule{0mm}{4mm}\Dom{\nu}^K_\eps([0,T]{\times}{\cal K}^c){>}a\right)\leq \frac{1}{a}\E\left[\Dom{\nu}^K_\eps([0,T]{\times}{\cal K}^c)\right]
\\ \leq \frac{T}{a}\sup_{\substack{0{<}\eps{<}1\\0{\leq}t{\leq}T}}\P\left(\left(\Dom{X}^K_{\eps}(t),\Dom{Z}^K_{\eps}(t)\right){\notin}{\cal K}\right)
\end{multline*}
Lemma~\ref{lemma:majX} shows the existence of a compact set ${\cal K}{\subset}\R_+^2$ such that the last term of the right-hand side of this inequality can be made arbitrarily small.
Lemma~1.3 of~\citet{karatzas_averaging_1992}  gives that the family of random measures $(\Dom{\nu}^K_{\eps}, 0{<}\eps{<}1)$ is tight.

for the last part of the proposition, we use the criterion of modulus of continuity, see~\citet{billingsley_convergence_1999}.
This is a simple consequence of the inequality, for $0{\le}s, t{\le}T$, $|I_G(t){-}I_G(s)|{\leq}\|G\|_\infty |t{-}s|$.
The lemma is proved.
\end{proof}

\begin{proposition}\label{TightBoundProp}
The family of random variables $(\Dom{\Omega}^{K}_\eps(t),\Dom{W}^{K}_\eps(t), \Dom{\nu}_{\eps}^{K})$, $\eps{\in}(0,1)$, is tight.
\end{proposition}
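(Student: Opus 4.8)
The plan is to observe that the tightness of the occupation measures $\Dom{\nu}^K_\eps$ is already granted by Lemma~\ref{TOccM}, so that it only remains to establish tightness of the two real-valued processes $(\Dom{\Omega}^K_\eps(t))$ and $(\Dom{W}^K_\eps(t))$ in $\mathcal{D}([0,T],\R_+)$ and then to assemble the joint tightness on the product space. The starting point is the integrated form of the third line of Relation~\eqref{SDELinTrunc},
\[
\Dom{\Omega}^K_\eps(t)=\Dom{\omega}_0-\alpha\int_0^t\Dom{\Omega}^K_\eps(s)\diff s+C_n\int_0^t(1{+}\ell\Dom{Z}^K_\eps(s))\diff s+J^1_\eps(t)+J^2_\eps(t),
\]
where $J^1_\eps$ and $J^2_\eps$ denote the two stochastic integrals driven respectively by $\eps{\cal N}_{\lambda/\eps}$ and $\eps{\cal N}_{\Dom{\beta}/\eps,\Dom{X}^K_\eps}$.

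First I would obtain a uniform moment bound. Since $J^1_\eps$ and $J^2_\eps$ are non-decreasing and the drift term carries the favorable sign $-\alpha$, dropping it gives $\sup_{t\le T}\Dom{\Omega}^K_\eps(t)\le \Dom{\omega}_0+C_n\int_0^T(1{+}\ell\Dom{Z}^K_\eps(s))\diff s+J^1_\eps(T)+J^2_\eps(T)$. Taking expectations and using the compensator identities $\E[J^1_\eps(T)]=C_n\lambda\int_0^T(1{+}\ell\E[\Dom{Z}^K_\eps(s)])\diff s$ and $\E[J^2_\eps(T)]=C_n\int_0^T\E[(1{+}\ell\Dom{Z}^K_\eps(s))\Dom{\beta}(\Dom{X}^K_\eps(s))]\diff s$, together with the uniform bounds on $\E[\Dom{Z}^K_\eps(s)]$, $\E[\Dom{X}^K_\eps(s)]$ and on the cross term $\E[\Dom{Z}^K_\eps(s)\Dom{X}^K_\eps(s)]$ coming from Lemma~\ref{lemma:majX}, Relation~\eqref{eqBouExp} and Proposition~\ref{R4Mom}, I would conclude $\sup_{0<\eps<1}\E[\sup_{t\le T}\Dom{\Omega}^K_\eps(t)]<{+}\infty$. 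This yields the compact containment condition for $(\Dom{\Omega}^K_\eps)$ through Markov's inequality, and shows in particular that $\sup_{t\le T}\Dom{\Omega}^K_\eps(t)$ is bounded in probability uniformly in $\eps$.

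The second step is the oscillation estimate. I would split each jump integral into a martingale and its compensator, writing $J^1_\eps(t)=C_n\int_0^t[\eps{\cal N}_{\lambda/\eps}(\diff s){-}\lambda\diff s]+C_n\ell\Dom{M}^K_{\eps,1}(t)+C_n\lambda\int_0^t(1{+}\ell\Dom{Z}^K_\eps(s))\diff s$, and analogously for $J^2_\eps$ using $\Dom{M}^K_{\eps,2}$ and the compensator $C_n\int_0^t(1{+}\ell\Dom{Z}^K_\eps(s))\Dom{\beta}(\Dom{X}^K_\eps(s))\diff s$. By Lemma~\ref{ITight} the pieces $\Dom{M}^K_{\eps,1}$ and $\Dom{M}^K_{\eps,2}$ converge to $0$, while the two purely Poissonian martingales $\int_0^t[\eps{\cal N}_{\lambda/\eps}(\diff s){-}\lambda\diff s]$ and its post-synaptic analogue have $L^2$-norms of order $\sqrt{\eps}$ and hence vanish uniformly on $[0,T]$ by Doob's inequality. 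Since $\Dom{\beta}(x){=}C_\beta(1{+}x)$, all the remaining compensator and drift contributions are continuous processes of the form $\int_0^t G(\Dom{X}^K_\eps,\Dom{Z}^K_\eps)\diff s$ with $G$ of linear growth, so their increments over an interval $[\tau_\eps,\tau_\eps{+}\delta]$ are bounded in $L^1$ by $O(\delta)$ uniformly in $\eps$, using the tightness of $\int_0^t\Dom{X}^K_\eps\diff u$, $\int_0^t\Dom{Z}^K_\eps\diff u$ and $\int_0^t\Dom{X}^K_\eps\Dom{Z}^K_\eps\diff u$ from Lemma~\ref{ITight}, and the moment bound of the first step for the self-referential term $\alpha\int\Dom{\Omega}^K_\eps$. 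The Aldous--Rebolledo criterion (equivalently, the modulus-of-continuity criterion of~\citet{billingsley_convergence_1999}) then gives the tightness of $(\Dom{\Omega}^K_\eps)$, the $O(\eps)$ size of the jumps guaranteeing that the limiting paths are continuous.

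Finally, tightness of $(\Dom{W}^K_\eps)$ is immediate from $\diff\Dom{W}^K_\eps(t){=}C_M(1{+}\Dom{\Omega}^K_\eps(t))\diff t$, which gives $|\Dom{W}^K_\eps(t){-}\Dom{W}^K_\eps(s)|\le C_M(t{-}s)(1{+}\sup_{u\le T}\Dom{\Omega}^K_\eps(u))$; the family thus consists of continuous processes whose Lipschitz constants are bounded in probability, and is therefore tight. Combining the three marginal tightness statements on the product of the two Skorohod spaces with the space of Radon measures yields the joint tightness. I expect the main obstacle to be the control of the post-synaptic compensator $C_\beta\int_0^t\Dom{Z}^K_\eps(s)(1{+}\Dom{X}^K_\eps(s))\diff s$, an unbounded quadratic functional of the fast variables: its tightness rests entirely on the estimate for $\int_0^t\Dom{X}^K_\eps\Dom{Z}^K_\eps\diff u$ and on the fourth-moment control of $R_\eps$ needed to send $\Dom{M}^K_{\eps,2}$ to zero, which is the delicate ingredient behind Lemma~\ref{ITight}.
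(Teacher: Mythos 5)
Your proof is correct and follows essentially the same route as the paper: tightness of $\Dom{\nu}^K_\eps$ from Lemma~\ref{TOccM}, a decomposition of $\Dom{\Omega}^K_\eps$ into tight linear and quadratic functionals of the fast variables plus vanishing compensated-jump martingales controlled by Lemma~\ref{ITight}, and the trivial Lipschitz bound for $\Dom{W}^K_\eps$. The only difference is technical: the paper applies the variation-of-constants formula~\eqref{eqSN} to eliminate the self-referential drift ${-}\alpha\int\Dom{\Omega}^K_\eps$ from the outset, whereas you keep the raw integrated SDE and handle that term through an a priori bound on $\E[\sup_{t\le T}\Dom{\Omega}^K_\eps(t)]$ obtained by discarding the negative drift; both are valid.
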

\begin{proof}
Tightness properties of $(\Dom{\nu}_\eps^{K})$ have been proved in Lemma~\ref{TOccM}.
Relation~\eqref{eqSN} gives the relation, for $t{\ge}0$,
\begin{multline}\label{fgh1}
\Dom{\Omega}^{K}_{\eps}(t){-}\Dom{\omega}_{0}e^{-\alpha t}{=} \int_0^t e^{-\alpha(t-s)}C_n\left(1{+}\ell \Dom{Z}^{K}_{\eps}(s)\right)\left(1{+}\lambda{+}C_{\beta}(1{+}\Dom{X}^{K}_\eps(s))\right)\diff s\\
{+}\int_0^t\hspace{-2mm}e^{-\alpha(t-s)}C_n\hspace{-1mm}\left(1{+}\ell \Dom{Z}^{K}_{\eps}(s{-})\right)\hspace{-1mm}\left[\eps\mathcal{N}_{\lambda/\eps}(\diff s){-}\lambda\diff s{+} \eps{\cal N}_{\Dom{\beta}/\eps,\Dom{X}^{K}_\eps}(\diff s){-}\Dom{\beta}\left(\Dom{X}^K_\eps(s)\right)\diff s \right].
\end{multline}
Lemma~\ref{ITight} shows that the family of processes associated to the first term of the right-hand side of this identity is tight, and that the process of the second term is vanishing in distribution as $\eps$ goes to $0$.
The family of processes $(\Dom{\Omega}^{K}_{\eps}(t))$ is therefore tight and the tightness of $(\Dom{W}^{K}_\eps(t))$ follows from its representation with $(\Dom{\Omega}^{K}_{\eps}(t))$.
The proposition is proved.
\end{proof}

\subsection{Averaging Principle for the  Truncated Process $(\Dom{U}^K(t))$}
\label{OccSec:3}
The goal of this section is to prove the following averaging principles for the truncated process.
We start by stating the two following lemmas that are proved in Appendix~\ref{sec:aproof} and that are essential to the proof of averaging principle.

We fix a sequence $(\eps_n)$ such that $(\Dom{\nu}_{\eps_n}^K)$ is converging in distribution to $\Dom{\nu}^K$.
The first result focus on identifying the limiting linear functional of $\Dom{\nu}^K$.
\begin{lemma}\label{CVX2}
For any continuous bounded Borelian function $G$ and $a$, $b$, $c{\in}\R_+$,  the sequence of processes
\begin{align*}
 & \left(\int_0^t \left(\rule{0mm}{4mm}a\Dom{X}_{\eps_n}^K(s){+}b \Dom{Z}_{\eps_n}^K(s){+}c\Dom{X}_{\eps_n}^K(s)\Dom{Z}_{\eps_n}^K(s)\right)G\left(\Dom{X}_{\eps_n}^K(s),\Dom{Z}_{\eps_n}^K(s)\right)\diff s\right)
  \intertext{ converges  in distribution to }
&\left(\int_0^t\int_{\R^2} (ax{+}bz{+}cxz)G(x,z)\Dom{\nu}^K(\diff s, \diff x,\diff z)\right).
\end{align*}
\end{lemma}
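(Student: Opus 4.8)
The plan is to rewrite the integrand through the occupation measure and reduce the claim to a weak-convergence statement for an \emph{unbounded} test function, which I then handle by truncation together with a uniform-integrability estimate. Set $F(x,z)\steq{def}(ax{+}bz{+}cxz)G(x,z)$ on $\R_+^2$. By the definition of the occupation measure in Section~\ref{OccSec:2}, the process on the left-hand side is exactly $Y_n(t)\steq{def}\int_{[0,t]\times\R_+^2}F(x,z)\,\Dom{\nu}^K_{\eps_n}(\diff s,\diff x,\diff z)$, and the claimed limit is $Y(t)\steq{def}\int_{[0,t]\times\R_+^2}F(x,z)\,\Dom{\nu}^K(\diff s,\diff x,\diff z)$. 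Since $G$ is bounded, $|F(x,z)|{\le}\|G\|_\infty(ax{+}bz{+}cxz)$, so $F$ is continuous but has quadratic growth, and the weak convergence $\Dom{\nu}^K_{\eps_n}{\Rightarrow}\Dom{\nu}^K$ alone does not give the convergence of $\int F\,\diff\Dom{\nu}^K_{\eps_n}$. I fix a continuous cutoff $\chi_N{:}\,\R_+^2{\to}[0,1]$ with $\chi_N{\equiv}1$ on $[0,N]^2$ and $\chi_N{\equiv}0$ off $[0,N{+}1]^2$, and set $Y_n^N(t){=}\int_{[0,t]\times\R_+^2}F\chi_N\,\diff\Dom{\nu}^K_{\eps_n}$ and $Y^N(t){=}\int_{[0,t]\times\R_+^2}F\chi_N\,\diff\Dom{\nu}^K$. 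The scheme is to verify the three hypotheses of the approximation theorem (see~\citet{billingsley_convergence_1999}): $Y_n^N{\Rightarrow}Y^N$ for each fixed $N$, $Y^N{\Rightarrow}Y$ as $N{\to}\infty$, and $\lim_N\limsup_n\P(\sup_{t\le T}|Y_n^N(t){-}Y_n(t)|{>}\delta){=}0$ for every $\delta{>}0$.

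For fixed $N$ the function $F\chi_N$ is bounded and continuous. Each $\Dom{\nu}^K_{\eps_n}$ has Lebesgue measure as its $s$-marginal, so by continuity of the projection the limit $\Dom{\nu}^K$ has the same $s$-marginal; in particular $\Dom{\nu}^K$ gives no mass to $\{s{=}t\}$, so the functional $\mu{\mapsto}(t{\mapsto}\int_{[0,t]\times\R_+^2}F\chi_N\,\diff\mu)$ is almost surely continuous at $\Dom{\nu}^K$ for the weak topology, with values in $\mathcal{C}([0,T])$ (the resulting paths are Lipschitz in $t$ with constant $\|F\chi_N\|_\infty$). The continuous mapping theorem then yields $Y_n^N{\Rightarrow}Y^N$ in $\mathcal{C}([0,T])$. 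The convergence $Y^N{\to}Y$ as $N{\to}\infty$ holds pathwise by dominated convergence once $\int_{[0,T]\times\R_+^2}|F|\,\diff\Dom{\nu}^K{<}\infty$ almost surely; this integrability is obtained by transferring the uniform bound below to the limit, using bounded convergence to get $\E[\int|F|\chi_N\,\diff\Dom{\nu}^K]{=}\lim_n\E[\int|F|\chi_N\,\diff\Dom{\nu}^K_{\eps_n}]\le\sup_n\E[\int|F|\,\diff\Dom{\nu}^K_{\eps_n}]$, followed by monotone convergence in $N$ and Fatou's lemma.

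The crux is the uniform tail estimate, i.e.\ the third hypothesis. Since
\[
\sup_{t\le T}\bigl|Y_n^N(t){-}Y_n(t)\bigr|\le \|G\|_\infty\int_0^T\bigl(a\Dom{X}^K_{\eps_n}(s){+}b\Dom{Z}^K_{\eps_n}(s){+}c\Dom{X}^K_{\eps_n}(s)\Dom{Z}^K_{\eps_n}(s)\bigr)\ind{\Dom{X}^K_{\eps_n}(s)\vee\Dom{Z}^K_{\eps_n}(s)>N}\diff s,
\]
Markov's inequality reduces the matter to showing that the expectation of the right-hand side tends to $0$ as $N{\to}\infty$, uniformly in $\eps{\in}(0,1)$. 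The dominating term is the product $\Dom{X}^K_\eps\Dom{Z}^K_\eps$. Here I use the bound $\Dom{X}^K_\eps(t){\le}C_X{+}KS_\eps(t)$ of Relation~\eqref{eqpp2} together with the decomposition of $\Dom{Z}^K_\eps(t)$ into shot-noise terms of the types $S_\eps$ and $R_\eps$ established in the proof of Lemma~\ref{ITight}. The exponential moment bound~\eqref{eqBouExp} for $S_\eps$ supplies all polynomial moments of $\Dom{X}^K_\eps$, and Proposition~\ref{R4Mom} controls the fourth moments of $R_\eps$; together they give $\sup_{\eps,t\le T}\E[(\Dom{X}^K_\eps(t)\Dom{Z}^K_\eps(t))^2]{<}\infty$ and $\sup_{\eps,t\le T}\E[\Dom{X}^K_\eps(t)^2{+}\Dom{Z}^K_\eps(t)^2]{<}\infty$. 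Cauchy–Schwarz then yields, uniformly in $\eps$ and $t{\le}T$,
\[
\E\!\left[\Dom{X}^K_\eps(t)\Dom{Z}^K_\eps(t)\ind{\Dom{X}^K_\eps(t)\vee\Dom{Z}^K_\eps(t)>N}\right]\le \E\!\left[(\Dom{X}^K_\eps(t)\Dom{Z}^K_\eps(t))^2\right]^{1/2}\P\!\left(\Dom{X}^K_\eps(t)\vee\Dom{Z}^K_\eps(t)>N\right)^{1/2}=O(1/N),
\]
the last probability being $O(1/N^2)$ by Markov's inequality; the two linear terms are treated identically and are smaller. Integrating over $[0,T]$ gives the required uniform smallness, and the three hypotheses combine through the approximation theorem to give $Y_n{\Rightarrow}Y$. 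The only delicate point is this moment bookkeeping, which rests entirely on the shot-noise estimates of the Appendix; the remaining assembly is routine.
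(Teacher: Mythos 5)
Your proof is correct and follows essentially the same route as the paper's: truncate the unbounded integrand so that weak convergence of the occupation measures applies to the bounded part, and control the truncation error uniformly in $\eps$ via the second-moment bound on $L_\eps{=}a\Dom{X}^K_\eps{+}b\Dom{Z}^K_\eps{+}c\Dom{X}^K_\eps\Dom{Z}^K_\eps$ supplied by the shot-noise estimates (Relations~\eqref{eqpp2} and~\eqref{eqBouExp} and Proposition~\ref{R4Mom}). The only cosmetic differences are that the paper truncates the value of the integrand ($A{\wedge}L_{\eps_n}$) rather than the spatial domain, bounds the tail by $\E\left[(L_{\eps_n}{-}A)^+\right]{\le}\E\left[L_{\eps_n}^2\right]/A$ instead of your Cauchy--Schwarz/Markov combination, and assembles the conclusion by first extracting a convergent subsequence of the linear-functional processes rather than invoking the three-condition approximation theorem.
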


The second lemma shows that $\Dom{\nu}^K$ can be expressed as the product of the invariant measure $\Pi_w$ and the Lebesgue measure, much in the spirit of~\citet{karatzas_averaging_1992}.
\begin{lemma}\label{OccMeasProp}
For any non-negative Borelian function $F$ on $\R_+{\times}\R_+^2$, almost surely,
\[
\int_0^T F(s,x,z)\Dom{\nu}^{K}(\diff s, \diff x,\diff z) =   \int_0^T F(s,x,z)\Pi_{K{\wedge}\Dom{w}^{K}(s)}(\diff x,\diff z)\,\diff s,
\]
where, for $w{\in}\R_+$, $\Pi_{w}$ is the unique invariant distribution of the Markov process associated to the infinitesimal generator $B^F_w$ defined by Relation~\eqref{BFGen2}.
\end{lemma}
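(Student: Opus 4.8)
The plan is to characterise $\Dom{\nu}^K$ through the stationary martingale problem associated with the generator $\Dom{B}^F_w$ of Relation~\eqref{BFGen2}, the classical route for averaging principles. For $f{\in}\mathcal{C}_b^1(\R{\times}\R_+)$, the SDEs~\eqref{SDELinTrunc} show that
\[
\Dom{M}^F_{f,\eps}(t)\steq{def}f\bigl(\Dom{X}^K_\eps(t),\Dom{Z}^K_\eps(t)\bigr){-}f(\Dom{x}_0,\Dom{z}_0){-}\frac1\eps\int_0^t \Dom{B}^F_{K\wedge\Dom{W}^K_\eps(s)}(f)\bigl(\Dom{X}^K_\eps(s),\Dom{Z}^K_\eps(s)\bigr)\diff s
\]
is a local martingale, and the moment bounds of Lemma~\ref{lemma:majX} and Proposition~\ref{R4Mom} make it a genuine martingale. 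Multiplying by $\eps$ isolates the generator term,
\[
\int_0^t \Dom{B}^F_{K\wedge\Dom{W}^K_\eps(s)}(f)\bigl(\Dom{X}^K_\eps(s),\Dom{Z}^K_\eps(s)\bigr)\diff s=\eps f\bigl(\Dom{X}^K_\eps(t),\Dom{Z}^K_\eps(t)\bigr){-}\eps f(\Dom{x}_0,\Dom{z}_0){-}\eps\Dom{M}^F_{f,\eps}(t),
\]
and the first step is to show the right-hand side vanishes as $\eps{\to}0$. The boundary terms are at most $2\eps\|f\|_\infty$, while the previsible increasing process of $\Dom{M}^F_{f,\eps}$ is of order $\eps^{-1}\int_0^t[\lambda{+}\Dom{\beta}(\Dom{X}^K_\eps(s))](\Delta f)^2\diff s$; since $f$ is bounded and $\Dom{\beta}(x){=}C_\beta(1{+}x)$, the tightness of $(\int_0^t \Dom{X}^K_\eps(s)\diff s)$ from Lemma~\ref{ITight} gives $\E[(\eps\Dom{M}^F_{f,\eps}(t))^2]{=}O(\eps)$, hence $\eps\Dom{M}^F_{f,\eps}(t){\to}0$.

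The second step is to pass the left-hand side to the limit along the fixed sequence $(\eps_n)$ with $\Dom{\nu}^K_{\eps_n}{\to}\Dom{\nu}^K$. By Relation~\eqref{BFGen2}, $\Dom{B}^F_w(f)(x,z)$ is the sum of a bounded continuous function and terms of the form $x\,G_1(x,z)$ and $z\,G_2(x,z)$ with $G_i$ bounded continuous, which is exactly the template of Lemma~\ref{CVX2}. Working on a probability space where the joint convergence $(\Dom{W}^K_{\eps_n},\Dom{\nu}^K_{\eps_n}){\to}(\Dom{w}^K,\Dom{\nu}^K)$ holds almost surely (Skorohod representation), with $\Dom{w}^K$ continuous, I would first replace the moving weight $K{\wedge}\Dom{W}^K_{\eps_n}(s)$ by $K{\wedge}\Dom{w}^K(s)$ inside the generator, the error being bounded by $\lambda\|\partial_x f\|_\infty\sup_s|\Dom{W}^K_{\eps_n}(s){-}\Dom{w}^K(s)|\,t{\to}0$. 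Lemma~\ref{CVX2} then handles the linear terms and the weak convergence of $\Dom{\nu}^K_{\eps_n}$ the bounded term, yielding, for every $t{\le}T$,
\[
\int_0^t\int_{\R^2} \Dom{B}^F_{K\wedge\Dom{w}^K(s)}(f)(x,z)\,\Dom{\nu}^K(\diff s,\diff x,\diff z)=0,\quad\text{a.s.}
\]

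The third step is identification. Each $\Dom{\nu}^K_\eps$ has Lebesgue time-marginal, so does $\Dom{\nu}^K$, and by the tightness of Lemma~\ref{TOccM} it disintegrates as $\Dom{\nu}^K(\diff s,\diff x,\diff z){=}\diff s\,\pi_s(\diff x,\diff z)$ with $(\pi_s)$ a measurable family of probability measures on $\R^2$. Differentiating the last display in $t$ gives, for almost every $s$ and every $f{\in}\mathcal{C}_b^1(\R{\times}\R_+)$, the stationarity relation $\int \Dom{B}^F_{K\wedge\Dom{w}^K(s)}(f)\,\diff\pi_s{=}0$. Since $\mathcal{C}_b^1$ is a determining class for the martingale problem of $\Dom{B}^F_w$, any probability measure annihilating $\Dom{B}^F_w$ is its stationary distribution, which is unique and equal to $\Pi_w$ by the Harris ergodicity of Proposition~\ref{InvPropFP}. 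Hence $\pi_s{=}\Pi_{K\wedge\Dom{w}^K(s)}$ for almost every $s$, which is the asserted identity for bounded continuous $F$; a monotone-class argument extends it to all non-negative Borelian $F$.

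The main obstacle is the genuine unboundedness of $\Dom{B}^F_w(f)$: because $\Dom{\beta}$ and the drifts grow linearly in $(x,z)$, convergence against bounded functions alone does not control $\int \Dom{B}^F_w(f)\,\diff\Dom{\nu}^K_{\eps_n}$, and the passage to the limit genuinely relies on the linear-functional tightness packaged in Lemma~\ref{CVX2}, combined with the uniform control of the moving weight $K{\wedge}\Dom{W}^K_\eps$. A secondary point requiring care is the determining-class property used in the final uniqueness step, which rests on the well-posedness underlying Proposition~\ref{InvPropFP}.
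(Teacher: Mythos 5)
Your proposal is correct and follows essentially the same route as the paper: the $\eps$-scaled martingale decomposition for $f{\in}\mathcal{C}_b^1$, the vanishing of $\eps\Dom{M}^f_\eps$, the passage to the limit of the generator integral via Lemma~\ref{CVX2} and the joint convergence of $(\Dom{W}^K_{\eps_n},\Dom{\nu}^K_{\eps_n})$, the disintegration of $\Dom{\nu}^K$ against Lebesgue measure in time, and the identification of the kernel with $\Pi_{K\wedge\Dom{w}^K(s)}$ by uniqueness of the invariant distribution from Proposition~\ref{InvPropFP}. The only point to tighten is the quantifier interchange at the end: the exceptional null set of times $s$ a priori depends on $f$, and the paper removes this dependence by first fixing a countable dense family $(f_n)$ in $\mathcal{C}_b^1(\R_+^2)$ and only then passing to all $f$.
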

With these two lemmas, Proposition~\ref{HomTruncProp} can be established.

\begin{proposition}\label{HomTruncProp}
Any limiting point $(\Dom{\omega}^{K}(t),\Dom{w}^{K}(t))$ of the family of processes  $(\Dom{\Omega}^{K}_{\eps}(t),\Dom{W}^{K}_\eps(t))$, when $\eps$ goes to $0$, verifies, almost surely for all $t{\ge}0$, the ODE
\[
\begin{cases}
\Dom{\omega}^{K}(t)&\displaystyle= \Dom{\omega}_{0}{-}\alpha \int_0^t \Dom{\omega}^{K}(s)\diff s \\
&\displaystyle\hspace{1cm}+\int_0^t \int_{\R^2}C_n(1{+}\ell z)(1{+}\lambda{+}C_{\beta}(1{+}x))\Pi_{\Dom{w}^{K}(s){\wedge} K}(\diff x,\diff z)\,\diff s\\
\Dom{w}^{K}(t)&\displaystyle=\Dom{w}_0+\int_0^t C_M\left(1+\Dom{\omega}^{K}(s)\right)\,\diff s,
\end{cases}
\]
holds, where $\Pi_{w}$ is the unique invariant distribution of the Markov process associated to the infinitesimal generator $B^F_w$ defined by Relation~\eqref{BFGen2}.
\end{proposition}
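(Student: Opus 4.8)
The plan is to pass to the limit in the integrated form of the SDEs~\eqref{SDELinTrunc} defining $(\Dom{\Omega}^{K}_{\eps},\Dom{W}^{K}_\eps)$, using the tightness of Proposition~\ref{TightBoundProp} to extract a convergent subsequence and the two limit Lemmas~\ref{CVX2} and~\ref{OccMeasProp} to identify the limit. The genuinely delicate analytic work---passing to the limit in the unbounded linear functionals of the fast coordinates and disintegrating the limiting occupation measure as a mixture of the invariant distributions $\Pi_w$---is already encapsulated in those two lemmas, so the argument here is essentially one of assembly and bookkeeping.

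First I would fix a sequence $(\eps_n)$ along which $(\Dom{\Omega}^{K}_{\eps_n},\Dom{W}^{K}_{\eps_n},\Dom{\nu}^{K}_{\eps_n})$ converges in distribution to $(\Dom{\omega}^{K},\Dom{w}^{K},\Dom{\nu}^{K})$; such a sequence exists by Proposition~\ref{TightBoundProp}, and by the Skorohod representation theorem I may assume this convergence holds almost surely on a common probability space, so that all subsequent limits can be identified pathwise. It is the \emph{joint} convergence that matters here: the occupation-measure limit $\Dom{\nu}^{K}$ fed into Lemma~\ref{CVX2} must be the same object that Lemma~\ref{OccMeasProp} disintegrates along the very path $(\Dom{w}^{K}(s))$ arising as the limit of $(\Dom{W}^{K}_{\eps_n})$.

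Next I would write the integrated form of the third equation of~\eqref{SDELinTrunc},
\begin{align*}
\Dom{\Omega}^{K}_{\eps}(t)={}&\Dom{\omega}_0-\alpha\int_0^t\Dom{\Omega}^{K}_{\eps}(s)\diff s+\int_0^t C_n\left(1{+}\ell\Dom{Z}^{K}_{\eps}(s)\right)\diff s\\
&{+}\int_0^t C_n\left(1{+}\ell\Dom{Z}^{K}_{\eps}(s{-})\right)\left(\eps\mathcal{N}_{\lambda/\eps}(\diff s){+}\eps\mathcal{N}_{\Dom{\beta}/\eps,\Dom{X}^{K}_\eps}(\diff s)\right),
\end{align*}
and replace the two scaled point processes by their compensators $\lambda\,\diff s$ and $\Dom{\beta}(\Dom{X}^{K}_\eps(s))\,\diff s$. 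The resulting martingale remainder is a linear combination of $(\Dom{M}^{K}_{\eps,1})$, $(\Dom{M}^{K}_{\eps,2})$ and the two ``$\Dom{Z}{\equiv}1$'' martingales $\eps\mathcal{N}_{\lambda/\eps}((0,t]){-}\lambda t$ and $\int_0^t(\eps\mathcal{N}_{\Dom{\beta}/\eps,\Dom{X}^{K}_\eps}(\diff s){-}\Dom{\beta}(\Dom{X}^{K}_\eps(s))\diff s)$; all of these vanish in distribution as $\eps\to 0$ by Lemma~\ref{ITight}, the bound~\eqref{eqBouExp} of Proposition~\ref{lemma:Slambda}, and Lemma~\ref{lemma:majX}. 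Since $\Dom{\beta}(x){=}C_\beta(1{+}x)$, the surviving drift integrand is $C_n(1{+}\ell\Dom{Z}^{K}_{\eps}(s))(1{+}\lambda{+}C_\beta(1{+}\Dom{X}^{K}_\eps(s)))$, which is a linear combination of $1$, $\Dom{X}^{K}_\eps$, $\Dom{Z}^{K}_\eps$ and $\Dom{X}^{K}_\eps\Dom{Z}^{K}_\eps$. Applying Lemma~\ref{CVX2} with $G{\equiv}1$ to the last three terms, and noting that the constant term integrates trivially against the time-marginal of $\Dom{\nu}^{K}$ (which is Lebesgue measure), this drift converges in distribution to $\int_0^t\int_{\R^2}C_n(1{+}\ell z)(1{+}\lambda{+}C_\beta(1{+}x))\Dom{\nu}^{K}(\diff s,\diff x,\diff z)$.

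Finally I would invoke Lemma~\ref{OccMeasProp} to rewrite this limit with $\Dom{\nu}^{K}(\diff s,\diff x,\diff z){=}\Pi_{K\wedge\Dom{w}^{K}(s)}(\diff x,\diff z)\,\diff s$, and pass to the limit in the remaining drift $-\alpha\int_0^t\Dom{\Omega}^{K}_{\eps}(s)\diff s\to-\alpha\int_0^t\Dom{\omega}^{K}(s)\diff s$, which is legitimate for the Skorohod topology since the limit is continuous. This yields the stated integral equation for $(\Dom{\omega}^{K}(t))$, and the equation for $(\Dom{w}^{K}(t))$ follows by the same continuity argument applied to $\Dom{W}^{K}_\eps(t){=}\Dom{w}_0{+}C_M\int_0^t(1{+}\Dom{\Omega}^{K}_{\eps}(s))\diff s$. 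The one point requiring genuine care here, beyond the analytic content delegated to Lemmas~\ref{CVX2} and~\ref{OccMeasProp}, is precisely the joint-convergence bookkeeping ensuring that the $\Dom{w}^{K}$ parametrizing $\Pi$ in Lemma~\ref{OccMeasProp} coincides with the limit appearing in the $\Dom{W}^{K}$ equation; everything else is routine.
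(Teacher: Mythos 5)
Your proposal is correct and follows essentially the same route as the paper: extract a jointly convergent subsequence of $(\Dom{\Omega}^{K}_{\eps},\Dom{W}^{K}_\eps,\Dom{\nu}^{K}_{\eps})$ via Proposition~\ref{TightBoundProp}, kill the compensated point-process terms by Lemma~\ref{ITight}, and identify the surviving drift with Lemmas~\ref{CVX2} and~\ref{OccMeasProp}. The only (immaterial) difference is that the paper works from the exponential-filter representation~\eqref{fgh1} of $\Dom{\Omega}^{K}_{\eps}$, which absorbs the $-\alpha\int_0^t\Dom{\Omega}^{K}_{\eps}(s)\,\diff s$ term into the kernel $e^{-\alpha(t-s)}$, whereas you keep the raw integrated SDE and pass to the limit in that term directly using continuity of the limit.
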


\begin{proof}
Relation~\eqref{fgh1} gives the identity, for $t{\ge}0$,
 \[
 \Dom{\Omega}^{K}_{\eps}(t)=\Dom{\omega}_0 e^{-\alpha t}{+} e^{-\alpha t}\Dom{M}_\eps^{K}(t){+} e^{-\alpha t} \int_0^t
\hspace{-2mm}e^{\alpha s}C_n(1{+}\ell \Dom{Z}_{\eps}(s))\left(1{+}\lambda{+}C_{\beta}(1{+}\Dom{X}^{K}_\eps(s))\right)\, \diff s,
 \]
with
\[
\Dom{M}_\eps^{K}(t)\steq{def}  \int_0^te^{\alpha s}C_n\left(1{+}\ell \Dom{Z}^{K}_{\eps}(s)\right)\left[\eps\mathcal{N}_{\lambda/\eps}(\diff s){+} \eps{\cal N}_{\Dom{\beta}/\eps,\Dom{X}^{K}_\eps}(\diff s){-}(\lambda{+}\Dom{\beta}(\Dom{X}_\eps^{K}(s)))\diff s\right].
\]
Proposition~\ref{ITight} shows that $(\Dom{M}_\eps^{K}(t))$ is  converging in distribution to $0$ when $\eps$ goes to $0$.
We now use Lemmas~\ref{CVX2} and~\ref{OccMeasProp} and we get that $(\Dom{\omega}^{K}(t),\Dom{w}^{K}(t))$ satisfies the desired relation.
\end{proof}
\section{Proof of an Averaging Principle}
\label{section:proof}
Finally, this section gathers all the results from the previous sections to prove Theorem~\ref{theorem:homog}.
The proof is done in two steps:
\begin{enumerate}
\item  Using  an analytical result, an averaging principle for $(\Dom{U}(t))$ is proved.
\item  The coupling of Section~\ref{section:coupling} is then used to show that a stochastic averaging result also holds in the general case.
\end{enumerate}

\subsection{Averaging Principle for the Coupled Process $(\Dom{U}(t))$}
\label{section:proof-1}
We now turn to an analytical result by considering the dynamical system of Proposition~\ref{HomTruncProp} when $K{=}{+}\infty$ and by showing an existence and uniqueness results which will be crucial in the proof of the general theorem.

For $w{\ge}0$, $\Dom{\Pi}_w$ is the invariant distribution  of the Markov process $(\Dom{X}^w(t),\Dom{Z}^w(t))$ satisfying the SDE
\begin{align*}
\diff \Dom{X}^w(t) &= {-}\Dom{X}^w(t)\diff t+w\mathcal{N}_{\lambda}(\diff t),\\
\diff \Dom{Z}^w(t) &= \left(-\gamma \Dom{Z}^w(t){+}C_k\right)\diff t{+}C_k\mathcal{N}_{\lambda}(\diff t){+}C_k\mathcal{N}_{\Dom{\beta},\Dom{X}^w}(\diff t).
\end{align*}
Its existence is a consequence of Proposition~\ref{InvPropFP}. 
\begin{theorem}\label{AsymLinProp}
Under conditions of Section~\ref{MCond},  there exists $S_0{\in}(0,{+}\infty]$ and  a unique continuous function $(\Dom{\omega}(t),\Dom{w}(t))$ on $[0,S_0)$, solution of the ODE,  for  $0{\le}t{<}S_0$,
    \begin{equation}\label{AsymLinODE}
    \begin{cases}
    \Dom{\omega}(t)&\displaystyle \hspace{-3mm}=\Dom{\omega}_0{-}\alpha \!\!\int_0^t \!\!\Dom{\omega}(s)\diff s{+}\!\!\int_0^t\hspace{-2mm} \int_{\R_+^2}C_n(1{+}\ell z)(1{+}\lambda{+}C_{\beta}(1{+}x))\Dom{\Pi}_{\Dom{w}(s)}(\diff x,\diff z)\diff s,\\
    \Dom{w}(t)&\displaystyle \hspace{-3mm}=\Dom{w}_0+\int_0^t C_M\left(1+\Dom{\omega}_p(s)\right)\,\diff s,
      \end{cases}
    \end{equation}
    with $(\Dom{\omega}_{0},\Dom{w}_0){\in}\R_+^2$

    Any limiting point $(\Dom{\omega}^{K}(t),\Dom{w}^{K}(t)))$ of the family of  processes  $(\Dom{\Omega}^{K}_{\eps}(t),\Dom{W}^{K}_\eps(t))$,  when $\eps$ goes to $0$ is such that, for all $0{\le}t{<}S_0$,
 \[
\Dom{\omega}^{K}(t)\le \Dom{\omega}(t) \text{ and } \Dom{w}^{K}(t)\le \Dom{w}(t).
 \]
\end{theorem}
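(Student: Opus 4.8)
The plan is to recast the statement as the study of a planar, non-autonomous ODE and to extract all the needed information from the invariant measures $\Dom{\Pi}_w$ through a handful of low-order moments. Writing $\Dom{\Pi}_w(g)$ for the $\Dom{\Pi}_w$-integral of a function $g$, set
\[
\Psi(w)\steq{def}\int_{\R_+^2} C_n(1{+}\ell z)\left(1{+}\lambda{+}C_\beta(1{+}x)\right)\Dom{\Pi}_w(\diff x,\diff z),
\]
so that Relation~\eqref{AsymLinODE} reads $\Dom{\omega}'(t){=}{-}\alpha\Dom{\omega}(t){+}\Psi(\Dom{w}(t))$ and $\Dom{w}'(t){=}C_M(1{+}\Dom{\omega}(t))$. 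Everything hinges on the regularity and the monotonicity of the single scalar function $\Psi$.

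First I would compute $\Psi$ explicitly. Expanding the integrand shows that $\Psi(w)$ is an affine combination of the stationary moments $\Dom{\Pi}_w(x)$, $\Dom{\Pi}_w(z)$ and $\Dom{\Pi}_w(xz)$, which I would obtain from the stationarity identity $\Dom{\Pi}_w(\Dom{B}^F_w f){=}0$ applied to the test functions $f(x,z){\in}\{x,\,z,\,x^2,\,xz\}$, with $\Dom{B}^F_w$ given by Relation~\eqref{BFGen2}. Because in the dominating dynamics the intensity $\Dom{\beta}(x){=}C_\beta(1{+}x)$ is affine, all jumps are constant, and $(\Dom{X}^w(t))$ is an autonomous linear shot-noise process independent of the post-synaptic point process, this yields a closed, triangular linear system: $\Dom{\Pi}_w(x){=}\lambda w$, $\gamma\Dom{\Pi}_w(z)$ is affine in $w$, $\Dom{\Pi}_w(x^2)$ is proportional to $w^2$, and finally $(1{+}\gamma)\Dom{\Pi}_w(xz)$ is expressed through the previous three. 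One concludes that $\Psi$ is a polynomial of degree at most two in $w$ with non-negative coefficients; in particular $\Psi$ is $\mathcal{C}^\infty$ and non-decreasing on $\R_+$.

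Given this, the existence and uniqueness assertion follows from Cauchy--Lipschitz: the vector field $(\omega,w)\mapsto({-}\alpha\omega{+}\Psi(w),\,C_M(1{+}\omega))$ is locally Lipschitz on $\R_+^2$, hence there is a unique maximal continuous solution on an interval $[0,S_0)$ with $S_0{\in}(0,{+}\infty]$. Since $\Psi$ may be genuinely quadratic, the solution can escape to infinity in finite time, which is exactly why $S_0$ need not equal ${+}\infty$; this is the blow-up alluded to after Theorem~\ref{theorem:homog}.

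For the comparison statement, recall that by Proposition~\ref{HomTruncProp} any limiting point $(\Dom{\omega}^K,\Dom{w}^K)$ solves the same system with $\Psi(\Dom{w}^K(s))$ replaced by $\Psi(\Dom{w}^K(s){\wedge}K)$ and the same initial data. As $\Psi$ is non-decreasing, $\Psi(w{\wedge}K){\le}\Psi(w)$ for every $w{\ge}0$, so the truncated vector field is pointwise dominated by the untruncated one; moreover the latter is cooperative, its off-diagonal Jacobian entries $\Psi'(w)$ and $C_M$ being non-negative. A standard comparison argument then gives $\Dom{\omega}^K(t){\le}\Dom{\omega}(t)$ and $\Dom{w}^K(t){\le}\Dom{w}(t)$ on $[0,S_0)$: considering the first time at which one of the differences $\Dom{\omega}{-}\Dom{\omega}^K$, $\Dom{w}{-}\Dom{w}^K$ would vanish with a negative derivative, the monotonicity of $\Psi$ and the sign of the coupling term $C_M$ force both derivatives to be non-negative there, a contradiction. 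I expect the genuine obstacle to lie not in this comparison but in the rigorous justification of the moment computation: the identity $\Dom{\Pi}_w(\Dom{B}^F_w f){=}0$ must be extended from bounded test functions to the polynomials $x^2$ and $xz$, which presupposes the finiteness and uniform integrability of the second moment of $\Dom{X}^w$ and of the cross moment under $\Dom{\Pi}_w$. These are precisely the controls supplied by the shot-noise estimates of Proposition~\ref{lemma:Slambda} and Proposition~\ref{Fam1}, together with the higher-moment bounds for $R_\eps$, through the representation $\Dom{X}^w\stackrel{d}{=}wS(\infty)$ and the affine-intensity structure of $\Dom{Z}^w$; a truncation-and-limit argument on the test functions then closes the computation.
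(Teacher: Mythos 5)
Your proposal is correct in substance. For the existence--uniqueness part you follow essentially the same route as the paper: reduce $\Psi$ to the stationary moments $\Dom{\Pi}_w(x)$, $\Dom{\Pi}_w(z)$, $\Dom{\Pi}_w(x^2)$, $\Dom{\Pi}_w(xz)$, observe that the affine intensity $\Dom{\beta}$ and constant jumps make the moment system closed and triangular, conclude that $\Psi$ is a quadratic polynomial in $w$ with non-negative coefficients (hence locally Lipschitz and non-decreasing), and invoke Cauchy--Lipschitz on a maximal interval $[0,S_0)$. The paper does the same computation, but it sidesteps the issue you flag at the end --- extending $\Dom{\Pi}_w(\Dom{B}^F_wf){=}0$ to unbounded test functions --- by working directly with the explicit shot-noise representation $\Dom{X}^w\steq{dist}w\int_0^{\infty}e^{-s}{\cal N}_\lambda(\diff s)$ and by integrating the SDE for $(\Dom{X}^w\Dom{Z}^w(t))$ started from equilibrium; your truncation-and-limit argument on test functions would also work but is the longer road.

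Where you genuinely diverge is the comparison step. You use the classical quasimonotone (Kamke-type) comparison for the cooperative planar system, arguing at a first crossing time. The paper instead runs a monotone Picard iteration: it initializes the scheme at $(\Dom{\omega}^{K},\Dom{w}^{K})$ itself, shows by induction (using only the monotonicity of $\Psi$ and $\Psi(w{\wedge}K){\le}\Psi(w)$) that every iterate dominates $(\Dom{\omega}^{K},\Dom{w}^{K})$, and passes to the limit of the scheme, which is $(\Dom{\omega},\Dom{w})$. The two arguments rest on the same monotonicity, but the Picard route cleanly avoids the one technical wrinkle in yours: since both trajectories start from the \emph{same} initial point, the differences vanish at $t{=}0$, so the ``first time a difference vanishes with negative derivative'' argument degenerates there and must be repaired, e.g.\ by comparing with the solution started from $(\Dom{\omega}_0{+}\delta,\Dom{w}_0{+}\delta)$ and letting $\delta{\downarrow}0$, or by writing the differential inequality for the differences and using that the right-hand side is non-negative whenever both differences are. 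This is a standard fix, not a gap in the idea, but it should be made explicit; as written, the contradiction you invoke is not available at $t{=}0$.
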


\begin{proof}
The existence of limiting points of the  processes  $(\Dom{\Omega}^{K}_{\eps}(t),\Dom{W}^{K}_\eps(t))$ is due to Proposition~\ref{HomTruncProp}. 
If $(\Dom{X}^w,\Dom{Z}^w)$ is a random variable with distribution $\Dom{\Pi}_w$, we have
\[
\Dom{X}^w\steq{dist}w\int_0^{+\infty} e^{-s}{\cal N}_\lambda(\diff s),
\]
and, with standard calculations, we obtain the relations
\begin{equation}\label{SNnRS}
\E\left[\Dom{X}^w\right]{=}\lambda w,\qquad
\E\left[\left(\Dom{X}^w\right)^2\right]{=}\left(\lambda^2 {+}\frac{\lambda}{2}\right)w^2,
\end{equation}
and, consequently,
\[
\gamma \E\left[\Dom{Z}^w\right]=C_k\left(1{+}\lambda{+}C_{\beta}\left(1{+}\E\left[\Dom{X}^w\right]\right)\right)=C_k\left(1{+}\lambda{+}C_{\beta}(1{+}\lambda w)\right).
\]
The SDEs for $(\Dom{X}^w(t))$ and $(\Dom{Z}^w(t))$ give
\begin{multline*}
\diff \Dom{X}^w\Dom{Z}^w(t)=\left({-}(\gamma{+}1)\Dom{X}^w(t) \Dom{Z}^w(t){+} C_k\Dom{X}^w(t)\right)\diff t\\{+}(w\Dom{Z}^w(t{-}){+}C_k\Dom{X}^w(t{-}){+}C_k w)\mathcal{N}_{\lambda}(\diff t){+}C_k\Dom{X}^w(t-)\mathcal{N}_{\Dom{\beta},\Dom{X}^w}(\diff t),
\end{multline*}
and thus, at equilibrium, we obtain the relation
\[
\E\left[\Dom{X}^w\Dom{Z}^w\right]=C_k\frac{1}{\gamma{+}1}\left(\lambda w\left(1{+}\E\left[\Dom{Z}^w\right]\right){+}(1{+}\lambda{+}C_{\beta})\E\left[\Dom{X}^w\right]{+}C_{\beta}\E\left[\left(\Dom{X}^w\right)^2\right]\right).
\]
We have therefore that the function
\[
w\mapsto C_n\int_{\R_+^2}(1{+}\ell z)(1{+}\lambda{+}C_{\beta}(1{+}x))\Dom{\Pi}_{w}(\diff x,\diff z)
\]
is a non-decreasing and locally Lipschitz function.
The existence and uniqueness follows from standard results for ODEs.
There exists some $S_0{>}0$, such that, on the time interval $[0,S_0)$, the solution $(\Dom{\omega}(t),\Dom{w}(t))$ of the ODE is the limit of a Picard's scheme $(\Dom{\omega}_{n}(t),\Dom{w}_n(t))$ associated to Relation~\eqref{AsymLinODE} with $$(\Dom{\omega}_{0}(t),\Dom{w}_0(t)){=}(\Dom{\omega}^{K}(t),\Dom{w}^{K}(t)),$$ for all $K{\in}\R_+$.
See Section~3 of Chapter~8 of~\citet{Hirsch} for example.
We now prove by induction that
$(\Dom{\omega}^{K}(t),\Dom{w}^{K}(t)){\le}(\Dom{\omega}_{n}(t),\Dom{w}_n(t))$ holds on  $[0,S_0)$ for all $n{\ge}1$.
If this is true for $n$, then
\begin{align*}
  \Dom{\omega}_{n+1}(t)&=\Dom{\omega}_{0}e^{-\alpha t}{+} \int_0^te^{-\alpha (t-s)}\int_{\R^2}C_n(1{+}\ell z)(1{+}\lambda{+}C_{\beta}(1{+}x))\Pi_{\Dom{w}_n(s)}(\diff x,\diff z)\,\diff s\\
&\ge\Dom{\omega}_{0}e^{-\alpha t}{+} \int_0^t\hspace{-2mm} e^{-\alpha (t-s)}\int_{\R^2}C_n(1{+}\ell z)(1{+}\lambda{+}C_{\beta}(1{+}x))\Pi_{\Dom{w}^{K}(s){\wedge}K}(\diff x,\diff z)\,\diff s\\
&=\Dom{\omega}^{K}(t),
\end{align*}
and the relation $\Dom{w}_{n+1}(t){\ge}\Dom{w}^{K}(t)$ follows directly.
The proof by induction is completed.
We just have to let $n$ go to infinity to obtain the last statement of our proposition.
\end{proof}

\begin{proposition} \label{prop:sapdom}
  Under conditions of Section~\ref{MCond}, for the convergence in distribution,
\[
\lim_{\eps\to 0} ((\Dom{\Omega}_{\eps}(t), \Dom{W}_{\eps}(t)),t{<}S_0) =
((\Dom{\omega}(t),\Dom{w}(t)),t{<}S_0),
\]
where $(\Dom{\Omega}_{\eps}(t), \Dom{W}_{\eps}(t))$ is the process defined by SDEs~\eqref{SDELin} and $((\Dom{\omega}(t),\Dom{w}(t)),t{<}S_0)$ by ODE~\eqref{AsymLinODE}.
\end{proposition}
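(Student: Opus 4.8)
The plan is to transfer the averaging principle already obtained for the truncated process in Proposition~\ref{HomTruncProp} and Theorem~\ref{AsymLinProp} to the untruncated dominating process, by showing that on any compact subinterval of $[0,S_0)$ the truncation becomes inactive for $K$ large enough and $\eps$ small. Fix $T'{<}S_0$. Since $(\Dom{w}(t))$ is continuous on $[0,S_0)$, the quantity $M_{T'}{\steq{def}}\sup_{t\le T'}\Dom{w}(t)$ is finite, and I would choose any $K{>}M_{T'}$.

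First I would identify the limit of the truncated process on $[0,T']$. By Proposition~\ref{TightBoundProp} the family $(\Dom{\Omega}^K_\eps,\Dom{W}^K_\eps)$ is tight, and by Proposition~\ref{HomTruncProp} any limiting point $(\Dom{\omega}^K,\Dom{w}^K)$ solves the truncated ODE driven by $\Pi_{\Dom{w}^K(s)\wedge K}$. The uniform bound of Theorem~\ref{AsymLinProp}, namely $\Dom{w}^K(s)\le \Dom{w}(s)\le M_{T'}{<}K$ for $s{\in}[0,T']$, shows that $\Dom{w}^K(s)\wedge K{=}\Dom{w}^K(s)$ there, so the limiting point in fact satisfies the untruncated ODE~\eqref{AsymLinODE} on $[0,T']$. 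By the uniqueness part of Theorem~\ref{AsymLinProp}, every limiting point coincides with $(\Dom{\omega},\Dom{w})$ on $[0,T']$; hence $(\Dom{\Omega}^K_\eps,\Dom{W}^K_\eps)$ converges in distribution to $(\Dom{\omega},\Dom{w})$ on $[0,T']$.

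Next I would compare the truncated and untruncated processes, coupling them through the same Poisson processes $\mathcal{P}_1,\mathcal{P}_2$. Since $\diff\Dom{W}^K_\eps{=}C_M(1{+}\Dom{\Omega}^K_\eps)\diff t\ge 0$, the process $(\Dom{W}^K_\eps(t))$ is nondecreasing, so on the event $\{\Dom{W}^K_\eps(T')\le K\}$ one has $\Dom{W}^K_\eps(s)\le K$, whence $K\wedge\Dom{W}^K_\eps(s){=}\Dom{W}^K_\eps(s)$, for every $s\le T'$; the truncated component $\Dom{X}^K_\eps$ then solves the same SDE as $\Dom{X}_\eps$, and by pathwise uniqueness $\Dom{U}^K_\eps{=}\Dom{U}_\eps$ on $[0,T']$. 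It therefore remains to show $\P(\Dom{W}^K_\eps(T'){>}K)\to 0$, which follows from the convergence in distribution of $\Dom{W}^K_\eps(T')$ to the deterministic value $\Dom{w}(T')\le M_{T'}{<}K$ established in the previous step.

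Combining the two steps, $(\Dom{\Omega}_\eps,\Dom{W}_\eps)$ and $(\Dom{\Omega}^K_\eps,\Dom{W}^K_\eps)$ coincide on $[0,T']$ with probability tending to $1$, so they share the same limit in distribution $(\Dom{\omega},\Dom{w})$; and as $T'{<}S_0$ is arbitrary, the convergence holds on $[0,S_0)$. I expect the crux of the argument to be the coordination between the truncation level and the limit: the uniform bound $\Dom{w}^K\le\Dom{w}$ of Theorem~\ref{AsymLinProp}, together with the monotonicity of $(\Dom{W}^K_\eps(t))$ in $t$, is precisely what guarantees that a single fixed $K{>}M_{T'}$ renders the truncation inactive over the whole interval in the limit, thereby promoting the truncated averaging principle to the untruncated one.
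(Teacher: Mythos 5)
Your proposal is correct and follows essentially the same route as the paper: identify the truncated limit via the uniform bound $\Dom{w}^K\le\Dom{w}$ and the uniqueness statement of Theorem~\ref{AsymLinProp} to deactivate the truncation in the limiting ODE, then use the high-probability coincidence of the truncated and untruncated processes (the paper's Relation~\eqref{equiK}, with $K_0{=}1{+}\sup_{t\le T}\Dom{w}(t)$) to transfer the convergence. Your explicit use of the monotonicity of $(\Dom{W}^K_\eps(t))$ to reduce the event to the terminal value $\Dom{W}^K_\eps(T')$ is a minor, valid streamlining of the same argument.
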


\begin{proof}
From Proposition~\ref{HomTruncProp}, let $(\Dom{\omega}^{K}(t),\Dom{w}^{K}(t))$ be a limiting point, there exists a sequence $(\eps_n)$ such that  the sequence of processes $(\Dom{\Omega}^{K}_{\eps_n}(t),\Dom{W}^{K}_{\eps_n}(t))$ is converging to a continuous process  $(\Dom{\omega}^{K}(t),\Dom{w}^{K}(t))$.

With the same notations as in Proposition~\ref{AsymLinProp}, for any $T{<}S_0$, by continuity of $(\Dom{\omega}(t),\Dom{w}(t))$ on $[0,T]$, the quantity
\[
K_0{\steq{def}}1{+}\sup_{t{\le}T} \Dom{w}(t)
\]
is finite.
Since $\Dom{w}^K(t){\le}\Dom{w}(t)$ holds for all $t{\ge}0$, the uniqueness result of  Proposition~\ref{AsymLinProp} gives the identity
\[
((\Dom{\omega}^{K}(t),\Dom{w}^{K}(t)),t{\le}T){=}((\Dom{\omega}(t),\Dom{w}(t)),t{\le}T)
\]
for all $K{\ge}K_0$.
Consequently, for any $\eta{>}0$, there exists $n_0{>}0$ such that for $n{\ge}n_0$,
\[
\P\left(\sup_{s\le T} \Dom{W}^{K_0}_{\eps_n} (s)\geq K_0 \right) \le \P\left(\sup_{s\le T} \Dom{W}^{K_0}_{\eps_n} (s)\geq  1{+}\sup_{t{\le}T} \Dom{w}^{K_0}(t) \right)\le \eta,
\]
since  the process $(\Dom{\omega}^{K_0}(t),\Dom{w}^{K_0}(t))$ is upper-bounded, coordinate by coordinate  on the time interval $[0,S_0)$, by $(\omega(t),w(t))$, defined by Relation~\eqref{AsymLinODE}. Note that $S_0$ is independent of the sequence $(\eps_n)$.
Hence,  for  $n{\ge}n_0$, Relation~\eqref{SDELinTrunc}  gives
\begin{multline}\label{equiK}
  \P
  \begin{pmatrix}
   \Dom{X}_{\eps_n}(s){=}\Dom{X}_{\eps_n}^{K_0}(s),  \Dom{Z}_{\eps_n}(s){=}\Dom{Z}_{\eps_n}^{K_0}(s),\\
   \Dom{\Omega}_{\eps_n}(s){=} \Dom{\Omega}^{K_0}_{\eps_n}(s), \Dom{W}_{\eps_n}(s){=}\Dom{W}_{\eps_n}^{K_0}(s),  \forall s{\le}T
  \end{pmatrix} \\
\ge\P\left(\sup_{ s{\le}T } \Dom{W}_{\eps_n}^{K_0}(s){\leq}K_0\right)\ge 1{-}\eta.
\end{multline}

This shows that the sequence of processes $((\Dom{\Omega}_{\eps_n}(t), \Dom{W}_{\eps_n}(t)),t{\le}T)$ is converging in distribution to  $((\Dom{\omega}(t),\Dom{w}(t)),t{\le}T)$. The proposition is proved.
\end{proof}

\subsection{Averaging Principle for the Process $({U}(t))$}
\label{secsec:theoremproof}
We now conclude this section with a sketch of the proof of Theorem~\ref{theorem:homog}. The missing details are not difficult to complete since we have already proved the main difficult results. We fix $T{<}S_0$. 

The  coupling property of Proposition~\ref{coupprop} gives, for the scaled processes, the relation  $U_\eps(t){\le}\Dom{U}_\eps(t)$,  for any $\eta{>}0$, Relation~\eqref{equiK} gives the existence of $K_0$ and $n_0$ such that $n{\ge}n_0$, the inequality
\[
\P\left(\sup_{t{\le}T} \left|W_{\eps_n}(t)\right|{\le} K_0\right)\ge 1{-}\eta
\]
holds.

We get that the results of Lemma~\ref{ITight} hold with $\Dom{X}_{\eps_n}^K$ and $\Dom{Z}_{\eps_n}^K$ replaced by $X_{\eps_n}$ and $\|Z_{\eps_n}\|$, and $\Dom{\beta}$ by $\beta$.
With the same arguments as in the proof of Lemma~\ref{TOccM}, the family of random measures $(\nu_{\eps_n})$ defined by Relation~\eqref{OddMDef} is tight.
Again the analogue of Lemma~\ref{CVX2} also holds.
Finally we have the tightness of the sequence of processes
\[
(\Omega_{{\eps_n},p}(t),\Omega_{{\eps_n},d}(t),W_{\eps_n}(t),\nu_{\eps_n}).
\]
By taking a subsequence, we can assume it is converging in distribution to some process $(\omega_p(t),\omega_d(t),w(t),\nu)$.
The proof of the averaging principle is built on the same architecture.
We just point out the places where the specific conditions of Section~\ref{defNotSec} play a role.
\begin{enumerate}
\item {\em Identification of $\nu$.}\\
If $B^f_w$ is the operator defined by Relation~\eqref{BFGen}, the continuity property of $g(\cdot)$, Condition of Section~\ref{SecPost}, and of $k_{i}$, $i{\in}\{1,2\}$, Condition of Section~\ref{SecZ} give that $(x,z,w){\mapsto}B^F_{w}(f)(x,z)$ is continuous and, with the analogue of Lemma~\ref{CVX2}, we have,
\[
\lim_{n\to+\infty} \int_0^T B^F_{W_{\eps_n}(s)}(f)\left(X_{\eps_n}(s),Z_{\eps_n}(s)\right) \diff s = \int_0^T \int_{\R{\times}\R_+^{\ell}}B^F_{w(s)}(f)(x,z)\nu(\diff s,\diff x,\diff z)
\]
Moreover, if $f{\in}{\cal C}_b^1(\R{\times}\R_+^{\ell})$,
\begin{equation}\label{jka2}
f\left(X_{\eps_n}(s),Z_{\eps_n}(s)\right)=f(x_0,z_0){+}M_{\eps_n}^f(t){+}\frac{1}{\eps}\int_0^t B^F_{W_{\eps_n}(s)}(f)\left(X_{\eps_n}(s),Z_{\eps_n}(s)\right) \diff s.
\end{equation}
as in the proof of Lemma~\ref{OccMeasProp}, we have that $(\eps_n M_{\eps_n}^f(t))$ is converging in distribution to $0$ which leads to the fact that
\[
\int_0^T \int_{\R{\times}\R_+^{\ell}}B^F_{w(s)}(f)(x,z)\nu(\diff s,\diff x,\diff z){=}0,
\]
and, consequently, almost surely, for any bounded Borelian function $G$ on $\R_+{\times}\R{\times}\R_+^{\ell}$,
\[
\int_0^T G(s,x,z)\nu(\diff s,\diff x,\diff z)=\int_0^T\int_{\R{\times}\R_+^{\ell}} G(s,x,z)\Pi_{w(s)}(\diff x,\diff z)\diff s.
\]
This important results from the fact that the invariant distribution $\Pi_w$ exists and is unique, for every $w{\in}K_W$ as proved in Proposition~\ref{InvPropFP} of the Appendix.
\item To establish the first identity of Relation~\eqref{AsymLinODE}, we need the convergence in distribution
\begin{multline*}
  \lim_{n\to+\infty} \left(\int_0^t e^{-\alpha(t-u)}\begin{pmatrix} n_{0}\left(Z_{\eps_n}(u)\right)\\n_{1}\left(Z_{\eps_n}(u)\right)\\\beta\left(X_{\eps_n}(u) \right)n_{2}\left(Z_{\eps_n}(u)\right)
  \end{pmatrix}\diff u \right)=\\
\left(\int_0^t e^{-\alpha(t-u)}\int_{\R{\times}\R_+^\ell}\begin{pmatrix} n_{0}\left(z\right)\\n_{1}\left(z\right)\\\beta\left(x\right)n_{2}\left(z\right)
\end{pmatrix}
\Pi_{w(u)}(\diff x,\diff z)\diff u\right).
\end{multline*}
This is consequence of the fact that $(w(t))$ is almost surely continuous and that, with the conditions of Section~\ref{OmCond}, for any $w{\in}K_W$,
  \[
  (x,z){\mapsto}(n_{0}(z), n_{1}(z),\beta(x) n_2(z)),
  \]
is $\Pi_w$ almost everywhere continuous. 
\end{enumerate}
The theorem is proved. 
\section{The Simple Model}\label{sec:simplemodel}
In this section we consider the simple model defined in Section~\ref{secsec:simplemodel}. 
Recall that the associated SDEs are
\[
\begin{cases}
\diff X(t) \displaystyle = {-}X(t)\diff t+W(t)\mathcal{N}_{\lambda}(\diff t),\\
\diff Z(t) \displaystyle =   {-}\gamma Z(t) \diff t+B_1\mathcal{N}_{\lambda}(\diff t)+B_2\mathcal{N}_{\beta,X}(\diff t),\\
\diff W(t) \displaystyle = Z(t{-})\mathcal{N}_{\beta,X}(\diff t),
\end{cases}
\]
with $\gamma{>}0$, $B_1$, $B_2{\in}\R_+$, and $\beta$ is assumed to be a Lipschitz function on $\R_+$.

This is not, strictly speaking, a special case of the processes defined by Relations~\eqref{eq:markov}, but the tightness results of Section~\ref{sec:aproof} of Appendix concerning occupation times of fast processes can obviously be used. 

Let, for $w{\ge}0$,  $(X^w(t),Z^w(t))$ be the fast processes associated to the model of Definition~\ref{DefFastw}. Proposition~\ref{InvPropFP} shows that $(X^w(t),Z^w(t))$ has a unique invariant distribution $\Pi_w$.  We denote by $(X^w_\infty,Z^w_\infty)$ a random variable with distribution $\Pi_w$.
\begin{proposition}\label{LemLipSimp}
The function 
  \[
w\mapsto \E\left[Z^w_\infty\beta(X^w_\infty)\right]=\int_{\R_+^2} z\beta(x)\Pi_w(\diff x,\diff z)
  \]
  is locally Lipschitz on $\R_+$.
\end{proposition}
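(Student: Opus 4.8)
The plan is to prove the local Lipschitz bound $|\E[Z^w_\infty\beta(X^w_\infty)] - \E[Z^{w'}_\infty\beta(X^{w'}_\infty)]| \le C|w-w'|$ for $w,w'$ in a bounded neighborhood of a fixed $w_0$, by coupling the stationary fast processes and splitting the difference of the two products. I would build two-sided stationary versions of $(X^w,Z^w)$ and $(X^{w'},Z^{w'})$ on a common probability space, driven by the same pair of Poisson processes $\mathcal{P}_1$ (for $\mathcal{N}_\lambda$) and $\mathcal{P}_2$ (for the post-synaptic spikes). The first, elementary, observation is that in the simple model the membrane potential receives no post-synaptic jump, so $X^w$ is a pure shot-noise driven by $\mathcal{N}_\lambda$ and is \emph{linear} in $w$: with the shared $\mathcal{N}_\lambda$ one has $X^w(s) = w\,\Xi(s)$, where $\Xi(s) = \int_{-\infty}^s e^{-(s-u)}\mathcal{N}_\lambda(\diff u)$ does not depend on $w$ and, by Proposition~\ref{lemma:Slambda} and Relation~\eqref{eqBouExp}, has all exponential moments. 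In particular $|X^w(s)-X^{w'}(s)| = |w-w'|\,\Xi(s)$ and, $\beta$ being Lipschitz with constant $L$, $|\beta(X^w(s))-\beta(X^{w'}(s))| \le L|w-w'|\Xi(s)$.

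Next I would couple the two post-synaptic point processes through the shared $\mathcal{P}_2$: at a point $(s,y)$ of $\mathcal{P}_2$ the two trains disagree exactly when $y$ lies between $\beta(X^w(s-))$ and $\beta(X^{w'}(s-))$, a region of width $|\beta(X^w(s))-\beta(X^{w'}(s))|$. Consequently the signed measure $\mathcal{N}_{\beta,X^w}-\mathcal{N}_{\beta,X^{w'}}$ is dominated in absolute value by a point process $\Delta$ whose atoms are these disagreements. Using the filter representation of Lemma~\ref{lemma:expofilter}, the $B_1$-contributions to $Z^w$ and $Z^{w'}$ cancel and $|Z^w_\infty - Z^{w'}_\infty| \le B_2\int_{-\infty}^0 e^{\gamma s}\Delta(\diff s)$. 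The crucial structural point is that $\Delta$ is driven by $\mathcal{P}_2$, which is independent of $\mathcal{P}_1$, hence independent of the whole family $(X^{w''})$ and of $\Xi$.

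Writing the difference of products as $\E[Z^w_\infty(\beta(X^w_\infty)-\beta(X^{w'}_\infty))] + \E[(Z^w_\infty - Z^{w'}_\infty)\beta(X^{w'}_\infty)]$, the first term is at most $L|w-w'|\,\E[Z^w_\infty\Xi_\infty]$, which is finite and locally bounded in $w$ by Cauchy--Schwarz together with $\E[(Z^w_\infty)^2]<\infty$ and $\E[\Xi_\infty^2]<\infty$; here the second moment of $Z^w_\infty$ is finite and continuous in $w$ by the same stationary shot-noise computations as in the proof of Theorem~\ref{AsymLinProp} (compare Relation~\eqref{SNnRS}) combined with the exponential moments of $\Xi$ and the linear growth of $\beta$. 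For the second term I would condition on $\mathcal{F}_1 = \sigma(\mathcal{P}_1)$, which determines both $\beta(X^{w'}_\infty)$ and the conditional intensity of $\Delta$; Campbell's formula then gives $\E[\int_{-\infty}^0 e^{\gamma s}\Delta(\diff s)\mid \mathcal{F}_1] = \int_{-\infty}^0 e^{\gamma s}|\beta(X^w(s))-\beta(X^{w'}(s))|\diff s \le L|w-w'|\int_{-\infty}^0 e^{\gamma s}\Xi(s)\diff s$, so the second term is bounded by $B_2 L|w-w'|\,\E[\beta(X^{w'}_\infty)\int_{-\infty}^0 e^{\gamma s}\Xi(s)\diff s]$, again finite and locally bounded by Cauchy--Schwarz and the moment estimates. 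Both terms being $O(|w-w'|)$, the local Lipschitz property follows.

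The main obstacle is the control of the second term. A naive Cauchy--Schwarz applied directly to $Z^w_\infty - Z^{w'}_\infty$ would only yield a bound of order $|w-w'|^{1/2}$, because the quadratic variation of the disagreement process $\Delta$ produces a diagonal contribution of order $|w-w'|$ rather than $|w-w'|^2$. Conditioning on the independent pre-synaptic process $\mathcal{P}_1$ and invoking Campbell's formula is precisely what keeps this term at the linear order $|w-w'|^1$. A secondary technical point is the justification of the two-sided stationary coupling, together with the uniform integrability needed either to work directly with the stationary variables or to pass to the limit $t\to\infty$ from processes started at a common initial state; this is where the ergodicity of $(X^w,Z^w)$ from Proposition~\ref{InvPropFP} and the locally uniform moment bounds enter.
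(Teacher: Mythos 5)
Your proposal is correct and follows essentially the same route as the paper's proof: the same exploitation of the linearity $X^w = w X^1$, the same splitting of the difference of products into two terms, the same coupling of the post-synaptic trains through the shared $\mathcal{P}_2$ with a disagreement strip of width $|\beta(X^w)-\beta(X^{w'})|$, and the same conditioning on $\mathcal{P}_1$ (Campbell's formula) to keep the second term at order $|w-w'|$ rather than $|w-w'|^{1/2}$. The only cosmetic differences are that you work directly with two-sided stationary versions (the paper starts from $0$ and lets $t\to\infty$) and that you invoke Cauchy--Schwarz for the first term where the paper conditions on $\mathcal{P}_1$ again.
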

\begin{proof}
 Assuming $X^w(0){=}Z^w(0){=}0$,
Lemma~\ref{lemma:expofilter} and Definition~\ref{Nbeta} give the relations
  \[
\begin{cases}
\displaystyle  X^w(t)=w\int_0^t e^{-(t-s)}{\cal N}_\lambda(\diff s)=wX^1(t)\\ 
\displaystyle Z^w(t) =B_1\int_0^t e^{-\gamma (t-s)}{\cal N}_\lambda(\diff s){+}B_2\int_0^t e^{-\gamma (t-s)}{\cal P}_2((0,\beta(w X^1(s-)),\diff s).
\end{cases}
\]
the random variable $(X^w(t),Z^w(t))$ is converging in distribution to $(X^w_\infty,Z^w_\infty)$ as well as any of its moments.  Define 
  \[
\Psi_t(w)\steq{def}  \E\left[\beta\left(wX^1(t)\right)\int_0^t e^{-\gamma (t-s)}{\cal P}_2((0,\beta(w X^1(s-)),\diff s)\right],
  \]
for $x$,$y{\ge}0$,
  \begin{multline*}
    \left|\Psi_t(x){-}\Psi_t(y)\right|\\
    \leq     \E\left[\left|\beta\left(xX^1(t)\right){-}\beta\left(y X^1(t)\right)\right|\int_0^t e^{-\gamma (t-s)}{\cal P}_2((0,\beta(x X^1(s-)),\diff s)\right]\\
    {+}\left|\E\left[\beta\left(y X^1(t)\right)\int_0^t e^{-\gamma (t-s)}{\cal P}_2((\beta(x X^1(s-)),\beta(y X^1(s-)),\diff s)\right]\right|.
  \end{multline*}
  We note that $(X^1(t))$ is a functional of ${\cal N}_\lambda$ and is therefore independent of the Poisson process ${\cal P}_2$. We now take care of the two terms of the right-hand side of the last expression.

  For the first term, if $L_\beta$ is the Lipschitz constant of the function $\beta$, we obtain
\begin{multline}\label{A1}
  \E\left[\left|\beta\left(xX^1(t)\right){-}\beta\left(y X^1(t)\right)\right|\int_0^t e^{-\gamma (t-s)}{\cal P}_2((0,\beta(x X^1(s)),\diff s)\right]\\
  =\E\left[\left|\beta\left(xX^1(t)\right){-}\beta\left(y X^1(t)\right)\right|\int_0^t e^{-\gamma (t-s)}\beta(x X^1(s))\diff s\right]\\
  \le L_\beta |x{-}y|\E\left[X^1(t)\int_0^t e^{-\gamma (t-s)}\beta(x X^1(s))\diff s\right],
\end{multline}
and, for the second term, if $|y{-}x|{\le}1$,
\begin{multline}\label{A2}
  \left|\E\left[\beta\left(y X^1(t)\right)\int_0^t e^{-\gamma (t-s)}{\cal P}_2((\beta(x X^1(s)),\beta(y X^1(s)),\diff s)\right]\right|\\
  \le   \E\left[\beta\left(y X^1(t)\right)\int_0^t e^{-\gamma (t-s)}\left|\beta(x X^1(s)){-}\beta(y X^1(s))\right|\diff s\right]\\
    \le  L_\beta|x{-}y| \E\left[\left(\beta(0){+}L_\beta(1{+}x) X^1(t)\right)\int_0^t e^{-\gamma (t-s)}X^1(s))\diff s\right].
\end{multline}
Fubini's Theorem gives the relation,
\[
\E\left[X^1(t)\int_0^t e^{-\gamma (t-s)}\beta(x X^1(s))\diff s\right]=
\int_0^t e^{-\gamma s}\E\left[X^1(t)\beta(x X^1(t{-}s))\right]\diff s,
\]
and the convergence in distribution of  the Markov process $(X^1(t))$ implies the convergence of $(\E\left[X^1(t)\beta(x X^1(t{-}s))\right])$ to a finite limit when $t$ goes to infinity. With Relations~\eqref{A1} and~\eqref{A2} and the expressions of $(X^w(t))$ and $(Z^w(t))$, we deduce that for $x{\ge}0$, there exists a constant $F_x$ independent of $t$ such that
\[
 \left|\E\left[Z^x(t)\beta\left(X^x(t)\right)\right]{-}\E\left[Z^y(t)\beta\left(X^y(t)\right)\right]\right|\leq F_x|x{-}y|
 \]
 holds for all $t{\ge}0$ and $y$ such that $|y{-}x|{\le}1$. We conclude the proof of the proposition by letting $t$ go to infinity. 

\end{proof}
The averaging principle for the simple model, announced in Section~\ref{AVGSubSec} of the introduction can now be stated. 
\begin{theorem}\label{AvgPropSimp}
  If the function $\beta$ is Lipschitz, there exists some $S_0(w_0){>}0$, such that the family of  processes $(W_\eps(t), t{<}S_0(w_0))$ defined by Relation~\eqref{eq:scaledsimple} converges in distribution to $(w(t),t{<}S_0(w_0))$, the unique solution of the ODE
  \[
  \frac{\diff w}{\diff t}(t) =\int_{\R_+^2} z\beta(x)\Pi_{w(t)}(\diff x,\diff z)
  \]
  with $w(0){=}w_0$. 
\end{theorem}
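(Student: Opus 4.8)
The plan is to reproduce, in the simpler setting, the two-stage architecture of the proof of Theorem~\ref{theorem:homog}: first establish tightness of $(W_\eps(t))$ and identify the integral equation satisfied by every limiting point, and then invoke the local Lipschitz property of Proposition~\ref{LemLipSimp} to deduce that this equation has a unique solution, which promotes tightness to genuine convergence in distribution. Throughout I fix $T{<}S_0(w_0)$, where $S_0(w_0)$ is the maximal existence time of the solution of the stated ODE; the restriction to $[0,T]$ is needed precisely because the solution may blow up in finite time, as $\beta$ is only assumed to have linear growth.

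First I would control $(W_\eps(t))$ by a dominating process, as in Proposition~\ref{coupprop} and Proposition~\ref{prop:sapdom}. Since $\beta$ is Lipschitz, hence of linear growth, the fast pair $(X_\eps,Z_\eps)$ is dominated by an interacting shot-noise process to which Proposition~\ref{Fam1} applies; this domination lets me truncate the synaptic weight at a level $K_0$ determined by the dominating solution, with probability arbitrarily close to one on $[0,T]$. On the truncated process the tightness of the occupation measure $\nu_\eps$, together with that of the linear functionals $\int_0^t X_\eps\diff u$, $\int_0^t Z_\eps\diff u$ and $\int_0^t X_\eps Z_\eps\diff u$, follows from Lemma~\ref{ITight} and Lemma~\ref{TOccM}, whose proofs transfer to the simple model with only notational changes.

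The key step is the averaging of the functional driving $(W_\eps)$. Writing the compensator of $\eps\mathcal{N}_{\beta/\eps,X_\eps}$ as $\beta(X_\eps(s))\diff s$, I decompose
\[
\int_0^t Z_\eps(s{-})\eps\mathcal{N}_{\beta/\eps,X_\eps}(\diff s) = M_\eps(t) + \int_0^t Z_\eps(s)\beta(X_\eps(s))\diff s,
\]
where $(M_\eps(t))$ is a martingale with previsible bracket $\eps\int_0^t Z_\eps(s)^2\beta(X_\eps(s))\diff s$. As in Lemma~\ref{ITight}, the moment bounds for the dominating shot-noise process, Relation~\eqref{eqBouExp} for $S_\eps$ and Proposition~\ref{R4Mom} for $R_\eps$, show that this bracket is $O(\eps)$ times a tight quantity, so $(M_\eps(t))$ vanishes in distribution. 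The remaining term $\int_0^t Z_\eps(s)\beta(X_\eps(s))\diff s$ is a linear functional of the occupation measure of the type treated in Lemma~\ref{CVX2}, and Lemma~\ref{OccMeasProp} identifies its limit as $\int_0^t \int_{\R_+^2} z\beta(x)\Pi_{w(s)}(\diff x,\diff z)\diff s$. Hence every limiting point $(w(t))$ of $(W_\eps(t))$ satisfies, on $[0,T]$, the integral equation $w(t){=}w_0{+}\int_0^t \int_{\R_+^2} z\beta(x)\Pi_{w(s)}(\diff x,\diff z)\diff s$.

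Finally, Proposition~\ref{LemLipSimp} asserts that $w{\mapsto}\int_{\R_+^2} z\beta(x)\Pi_w(\diff x,\diff z)$ is locally Lipschitz on $\R_+$, so by the Cauchy--Lipschitz theorem the integral equation above admits a unique maximal solution, defined exactly up to the blow-up time $S_0(w_0)$. Uniqueness means the tight family $(W_\eps(t),t{<}S_0(w_0))$ has a single limiting point and therefore converges in distribution to $(w(t),t{<}S_0(w_0))$. I expect the main obstacle to lie in the tightness and averaging of the unbounded functional $\int_0^t Z_\eps(s{-})\eps\mathcal{N}_{\beta/\eps,X_\eps}(\diff s)$ of the third paragraph: the integrand $Z_\eps$ grows linearly and is driven by a point process whose own intensity grows linearly in $X_\eps$, so the bounded-functional arguments of~\citet{karatzas_averaging_1992} do not apply directly. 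This is precisely the difficulty resolved by the interacting shot-noise estimates of Proposition~\ref{Fam1} and Proposition~\ref{R4Mom}, which is why those results are the decisive ingredients here.
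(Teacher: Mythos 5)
Your proposal is correct and follows essentially the same route as the paper: the paper's proof simply writes $W_\eps(t)=w_0+\int_0^t Z_\eps(s{-})\eps\mathcal{N}_{\beta/\eps,X_\eps}(\diff s)$ and then "proceeds as in Section~\ref{section:proof} using the analogues of Lemmas~\ref{ITight} and~\ref{CVX2}", which is precisely the coupling/truncation, martingale-plus-compensator decomposition, and occupation-measure identification that you spell out, with uniqueness supplied by Proposition~\ref{LemLipSimp}. Your expanded version, including the correct previsible bracket $\eps\int_0^t Z_\eps(s)^2\beta(X_\eps(s))\diff s$ and the appeal to Propositions~\ref{Fam1} and~\ref{R4Mom} for the unbounded functionals, is a faithful unpacking of the argument the paper leaves implicit.
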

\begin{proof}
For $t{\ge}0$,
  \[
   W_\eps(t)=w_0+\int_0^t Z_\eps(s{-})\eps\mathcal{N}_{\beta/\eps,X_\eps}(\diff s),
   \]
we then proceed as in Section~\ref{section:proof} by using in particular the analogue of Lemma~\ref{ITight} and~\ref{CVX2}.
\end{proof}
An explicit representation of the limiting synaptic weight process can be obtained when linear activation functions.
\begin{proposition}\label{ODESimpAvProp}
If the activation function $\beta$ is such that  $\beta(x){=}\nu{+}\beta_0 x$ and 
\[
\Lambda_2{=}\lambda\beta_0^2B_2\left(\frac{\lambda}{\gamma}{+}\frac{1}{2(\gamma{+}1)}\right),\, \Lambda_1{=}\lambda\beta_0 \left(\frac{B_1}{\gamma{+}1}{+}\frac{\lambda B_1{+}2\nu B_2}{\gamma} \right),  \Lambda_0{=}\frac{\nu}{\gamma}(\lambda B_1{+}\nu B_2),
\]
 then if $\Lambda_2{>}0$,  the asymptotic weight process $(w(t),0{\le}t{<}S_0(w_0))$ of Theorem~\ref{AvgPropSimp} with initial point $w_0{\ge}0$ can be expressed as:
\begin{enumerate}
\item If $\Delta{\steq{def}}\Lambda_1^2{-}4\Lambda_2\Lambda_0{>}0$, then
  \[
w(t)=\frac{\displaystyle s_2(w_0{+}s_1)e^{\sqrt{\Delta} t}{-}s_1(w_0{+}s_2)}{(w_0{+}s_2){-}(w_0{+}s_1)e^{\sqrt{\Delta} t}},
  \quad S_0(w_0){=}\frac{1}{\sqrt{\Delta}}\ln\left(\frac{w_0{+}s_2}{w_0{+}s_1}\right),
  \]
  with
  \[
  s_1\steq{def}\frac{\Lambda_1{-}\sqrt{\Delta}}{2\Lambda_2}\text{ and }  s_2\steq{def}\frac{\Lambda_1{+}\sqrt{\Delta}}{2\Lambda_2}.
  \]
\item If $\Delta{=}0$, then
  \[
  w(t)=\frac{2w_0\Lambda_2{+}\Lambda_1}{\Lambda_2(2{-}(2\Lambda_2w_0{+}\Lambda_1)t)}{-}\frac{\Lambda_1}{2\Lambda_2},\quad S_0(w_0){=}\frac{2}{2w_0\Lambda_2{+}\Lambda_1}.
  \]
\item If $\Delta{<}0$, then
  \[
w(t)=\frac{\sqrt{{-}\Delta}}{2\Lambda_2}\left( \tan\left(\frac{1}{2}\sqrt{-\Delta}\cdot t{+}\arctan\left(z_0\right)\right)  {+}\left\lfloor\frac{z_0}{\pi}{+}\frac{1}{2}\right\rfloor \pi\right){-}\frac{\Lambda_1}{2\Lambda_2},  
\]
with
\[
S_0(w_0){=}\frac{2}{\sqrt{-\Delta}}\left(\frac{\pi}{2}{-}\arctan\left(z_0\right)\right)\text{ and }
z_0\steq{def}\frac{2w_0\Lambda_2{+}\Lambda_1}{\sqrt{{-}\Delta}}.
\]
\end{enumerate}
\end{proposition}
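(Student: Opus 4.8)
The plan is to reduce the statement to solving an explicit scalar Riccati-type ODE. By Theorem~\ref{AvgPropSimp}, any limit $(w(t))$ satisfies $\diff w/\diff t = \E[Z^{w(t)}_\infty\beta(X^{w(t)}_\infty)]$, so the whole content of the proposition is to (i) show that, for $\beta(x)=\nu+\beta_0 x$, the map $w\mapsto \E[Z^w_\infty\beta(X^w_\infty)]$ is the quadratic $\Lambda_2 w^2+\Lambda_1 w+\Lambda_0$ with the stated coefficients, and (ii) integrate the resulting separable ODE $\diff w/\diff t=\Lambda_2 w^2+\Lambda_1 w+\Lambda_0$ in closed form.

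For step (i), I would compute the stationary moments of $(X^w,Z^w)$ directly. From the shot-noise representation $X^w_\infty\steq{dist}w\int_0^{+\infty}e^{-s}\mathcal{N}_\lambda(\diff s)$, already used in the proof of Theorem~\ref{AsymLinProp}, Relation~\eqref{SNnRS} gives $\E[X^w_\infty]=\lambda w$ and $\E[(X^w_\infty)^2]=(\lambda^2+\lambda/2)w^2$. Writing stationarity $\E[\diff Z^w(t)]=0$ for the SDE of $Z^w$, and using that the compensator of $\mathcal{N}_{\beta,X^w}$ has intensity $\beta(X^w(t{-}))$, yields $\gamma\E[Z^w_\infty]=\lambda B_1+B_2(\nu+\beta_0\lambda w)$, affine in $w$. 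The only nontrivial moment is $\E[X^w_\infty Z^w_\infty]$: I would apply the jump product rule to $(X^w(t)Z^w(t))$, noting that at an atom of $\mathcal{N}_\lambda$ both coordinates jump (producing the cross term $wB_1$ alongside $wZ^w$ and $B_1X^w$), while at an atom of $\mathcal{N}_{\beta,X^w}$ only $Z^w$ jumps, by $B_2$. Imposing stationarity then gives $(1{+}\gamma)\E[X^w_\infty Z^w_\infty]=\lambda\bigl(w\E[Z^w_\infty]{+}B_1\E[X^w_\infty]{+}wB_1\bigr)+B_2\E[X^w_\infty\beta(X^w_\infty)]$, in which every right-hand term is an already-known polynomial in $w$ of degree at most two. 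Substituting and collecting powers of $w$ in $\E[Z^w_\infty\beta(X^w_\infty)]=\nu\E[Z^w_\infty]+\beta_0\E[X^w_\infty Z^w_\infty]$, and simplifying with identities such as $\lambda^2/\gamma+\lambda^2=\lambda^2(1{+}\gamma)/\gamma$, reproduces exactly the constants $\Lambda_2,\Lambda_1,\Lambda_0$ of the statement (with $\Lambda_0$ arising solely from the constant term of $\nu\E[Z^w_\infty]$).

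For step (ii), the ODE $\diff w/\diff t=\Lambda_2 w^2+\Lambda_1 w+\Lambda_0$ with $\Lambda_2>0$ is separable, and its solution depends only on the sign of $\Delta=\Lambda_1^2-4\Lambda_2\Lambda_0$. When $\Delta>0$ the quadratic factors as $\Lambda_2(w+s_1)(w+s_2)$, whose roots $-s_1,-s_2$ are non-positive since $\Lambda_0,\Lambda_1\ge0$; partial fractions and integration from $w(0)=w_0$ give the stated rational-exponential expression, and the finite escape time $S_0(w_0)$ is the instant at which its denominator $(w_0{+}s_2)-(w_0{+}s_1)e^{\sqrt{\Delta}t}$ vanishes. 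The case $\Delta=0$ (integrating $\diff w/(w+\Lambda_1/(2\Lambda_2))^2$) and the case $\Delta<0$ (completing the square and integrating to an $\arctan$, whence the $\tan$ form and the floor term fixing the correct branch) are the standard closed forms, with $S_0(w_0)$ read off as the first time the trajectory escapes to $+\infty$. Since in all cases the quadratic is strictly positive on $[0,+\infty)$ while $w_0\ge0$, the trajectory is increasing and blows up at $S_0(w_0)<+\infty$, matching the blow-up phenomenon announced in the introduction.

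The main obstacle is the rigorous justification of step (i): applying Dynkin's formula at stationarity to the unbounded functionals $Z^w$ and $X^wZ^w$ requires that $(X^w_\infty,Z^w_\infty)$ have finite first and second moments and that the time-averaged martingale remainders vanish. This is precisely where the exponential moment bound of Proposition~\ref{lemma:Slambda} and the uniform integrability behind the convergence of moments used in Proposition~\ref{LemLipSimp} are needed. Once these integrability facts are secured, the moment identities close as a finite linear system determining $\E[X^w_\infty Z^w_\infty]$, and the remaining ODE integration is entirely routine.
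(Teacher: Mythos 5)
Your proposal is correct and follows essentially the same route as the paper: the stationary moments $\E[X^w_\infty]$, $\E[(X^w_\infty)^2]$, $\E[Z^w_\infty]$ and $\E[X^w_\infty Z^w_\infty]$ are obtained exactly as in the paper (shot-noise representation plus stationarity of the product SDE, noting that only $Z^w$ jumps at post-synaptic spikes), yielding the Riccati ODE $\diff w/\diff t=\Lambda_2w^2+\Lambda_1w+\Lambda_0$, which is then integrated case by case according to the sign of $\Delta$. The only difference is that you make explicit the integrability justification for Dynkin's formula and the elementary quadrature steps that the paper dismisses as ``trite calculations''.
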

It should be noted that under the conditions of this proposition, this model always exhibits a blow-up phenomenon.
\begin{proof}
  The SDEs give the relation
\begin{multline*}
\diff X^wZ^w(t) = -(\gamma{+}1)X^wZ^w(t)\diff t \\ + (wZ^w(t{-}){+}B_1w{+}B_1X^w(t{-})){\cal N}_{\lambda}(\diff t) + B_2X^w(t{-}){\cal N}_{\beta,X^w}(\diff t).
\end{multline*}
If the initial point has the same distribution as $(X^w_\infty,Z^w_\infty)$, by integrating and by  taking the expected valued of this SDE, we obtain the identity
\[
(\gamma{+}1)\E\left[X^w_\infty Z^w_\infty\right]=\lambda wB_1+(\lambda B_1{+}\nu B_2)\E\left[X^w_\infty\right]+\beta_0 B_2\E\left[(X^w_\infty)^2\right]{+}\lambda w\E[Z^w_\infty].
\]
With Relations~\eqref{SNnRS}, we have
\[
\E\left[X^w_\infty\right]=\lambda w,\quad \E\left[(X^w_\infty)^2\right]=\left(\lambda^2 {+}\frac{\lambda}{2}\right)w^2,
\]
and, similarly,
\[
\gamma\E[Z^w_\infty]=\lambda B_1{+}B_2(\nu{+}\beta_0\E[X^w_\infty])=
\lambda B_1{+}B_2(\nu{+}\beta_0\lambda w).
\]
By using Theorem~\ref{AvgPropSimp}, with these identities,  we obtain that $(w(t))$ satisfies the ODE
\begin{equation}\label{ODESimpAv}
  \frac{\diff w}{\diff t}(t) =\E\left[Z^w_\infty\left(\nu{+}\beta_0 X^w_\infty\right)\right]=\Lambda_2w^2{+}\Lambda_1 w{+}\Lambda_0,
\end{equation}
on its domain of definition. We conclude the proof with trite calculations.  
\end{proof}

\printbibliography

\newpage

\appendix

\begin{figure}[ht]  \fontsize{8pt}{10pt}\selectfont\centering{\makebox[\textwidth][c]{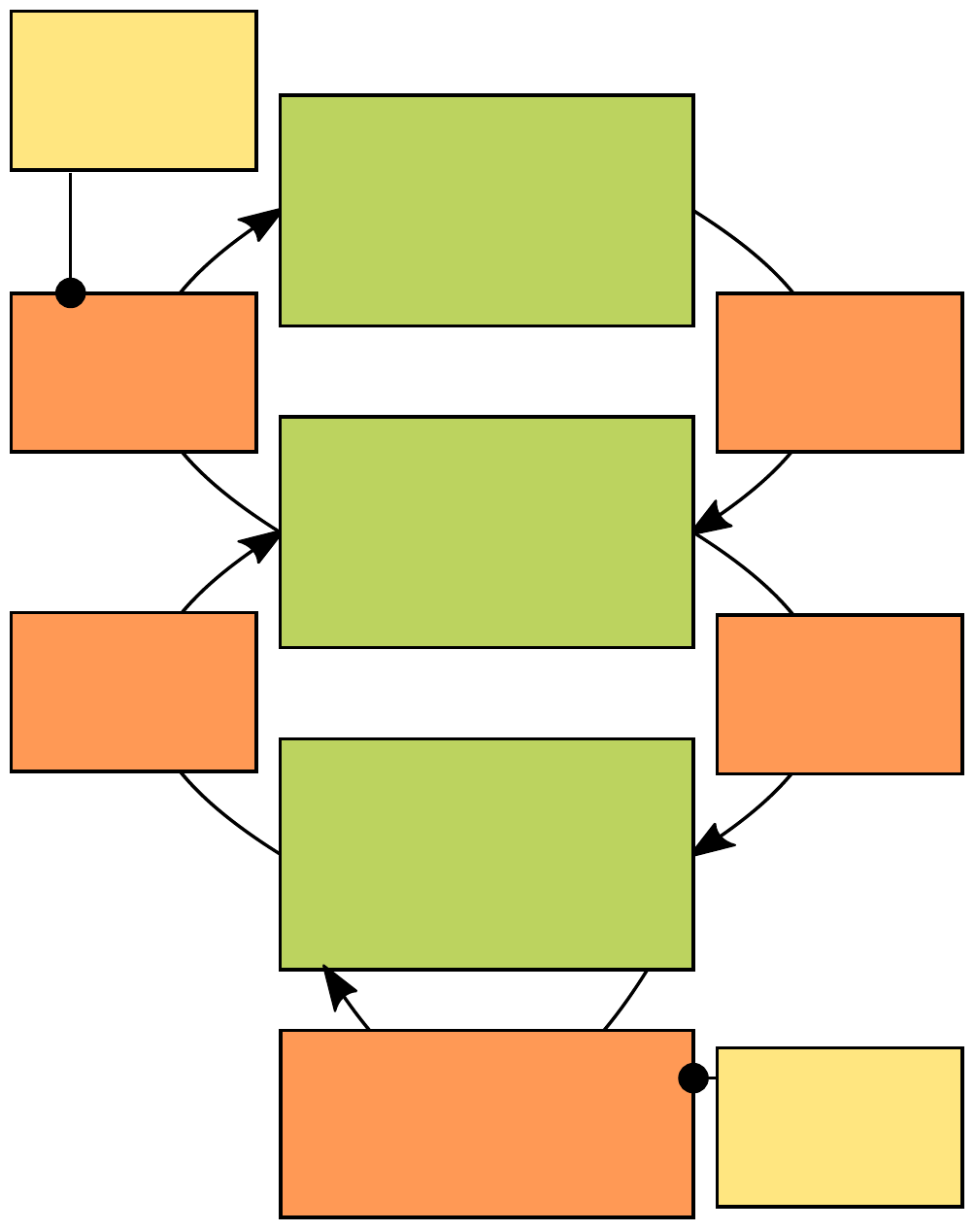}}\caption{Graphical Representation of the Steps of the Proof  of  Averaging Principles}\label{fig:proof}
\end{figure}

\section{Proofs of Technical Results for Occupation Times}
\label{sec:aproof}
\subsection{Proof of Lemma~\ref{CVX2}}
Denote, for $t{\ge}0$, $a$, $b$, $c{\in}\R_+$, and $\eps{>}0$,
\[
L_{\eps}(t) \steq{def} a\Dom{X}_{\eps}^K(t){+}b \Dom{Z}_{\eps}^K(t){+}c\Dom{X}_{\eps}^K(t)\Dom{Z}_{\eps}^K(t).
  \]
Let $G$ be a continuous bounded Borelian function.
From Proposition~\ref{ITight}, we can extract a sub-sequence $(\eps_n)$ such that, for the convergence in distribution
 \[
    \lim_{n\to+\infty} \left(\int_0^tL_{\eps_{n}}(u)G\left(\Dom{X}_{\eps_{n}}^K(u),\Dom{Z}_{\eps_{n}}^K(u)\right) \diff u\right)=(L(t)),
    \]
where $(L(t))$ is a  continuous \cadlag process.

 We will now prove that the process $(L(t))$ is such that,
 \[
 (L(t)) = \left(\int_0^t\int_{\R^2} (ax{+}bz{+}cxz)G(x,z)\Dom{\nu}^K(\diff s, \diff x,\diff z)\right)
 \]
holds almost-surely for $t{\in}[0,T]$.

For $A{>}0$,  the convergence of $(\Dom{\nu}^K_{\eps_n})$ to $\Dom{\nu}^K$ gives the convergence in distribution,
 \begin{multline}\label{eqqj}
  \lim_{n\to+\infty} \left(\int_0^t A{\wedge} L_{\eps_{n}}(s)G\left(\Dom{X}_{\eps_{n}}^K(s),\Dom{Z}_{\eps_{k_n}}^K(s)\right)\diff s\right)
\\  =\left(\int_0^t\int_{\R_+^2} A{\wedge}  (ax{+}bz{+}cxz) G(x,z)\Dom{\nu}^K(\diff s, \diff x,\diff z)\right).
 \end{multline}
Using again the upper-bound, Relation~\eqref{eqpp2}, for $(\Dom{X}^K_\eps(t))$ with  Relation~\eqref{eqBouExp},  and Proposition~\ref{R4Mom} for $R_{\eps}$, we obtain that
 \[
C_{L}\steq{def} \sup_{\substack{0{<}\eps{<}1\\0{\leq}t{\leq}T}} \E\left[L_\eps(s)^2\right]{<}+\infty,
 \]
hence, for $\eta{>}0$,
 \begin{multline*}
   \P\left(\int_0^T (L_\eps(s){-}A)^+\diff s{\ge}\eta \right)\leq \frac{1}{\eta}\int_0^T\E\left[(L_\eps(s){-}A)^+\right]\diff s
 \\ \leq \frac{1}{\eta A}\int_0^T\E\left[L_\eps(s)^2\right]\diff s\le \frac{C_{L}T}{\eta A}.
 \end{multline*}

Since $G$ is bounded,  with the elementary relation $x{=}x{\wedge}A{+}(x{-}A)^+$, $x{\ge}0$, then, for $n{\ge}1$,
 \begin{multline}\label{eqqk}
 \P\left(\sup_{0\le t\le T}\left|\int_0^tL_{\eps_{n}}(u)G\left(\Dom{X}_{\eps_{n}}^K(u),\Dom{Z}_{\eps_{n}}^K(u)\right) \diff u\right.\right.\\\left.\left.{-}\int_0^t A{\wedge}L_{\eps_{n}}(u)G\left(\Dom{X}_{\eps_{n}}^K(u),\Dom{Z}_{\eps_{n}}^K(u)\right)\diff u\right|{\ge}\eta \right)\leq\frac{C_{L} T}{\eta A}\|G\|_{\infty}.
 \end{multline}
 For any $A{>}0$ and $n{\ge}1$, Cauchy-Schwartz's inequality gives the relation
\[
  \E\left[\int_0^T A{\wedge}L_{\eps_{n}}(u)G\left(\Dom{X}_{\eps_{n}}^K(u),\Dom{Z}_{\eps_{n}}^K(u)\right)\diff u\right]
  \leq \sqrt{C_L}T\|G\|_{\infty}.
  \]
  With Relation~\eqref{eqqj} and the fact that the left-hand side of~\eqref{eqqj} has a bounded second moment, by letting $n$ go to infinity, we get the inequality
  \[
  \E\left[\int_0^T\int_{\R_+^2} A{\wedge}  (ax{+}bz{+}cxz) G(x,z)\Dom{\nu}^K(\diff s, \diff x,\diff z)\right]  \leq \sqrt{C_L}T\|G\|_{\infty}.
  \]
  By letting $A$ go to infinity, the monotone convergence theorem shows that
\begin{multline*}
\lim_{A\to+\infty} \E\left[\int_0^T\int_{\R_+^2} A{\wedge}(ax{+}bz{+}cxz) G(x,z)\Dom{\nu}^K(\diff s, \diff x,\diff z)\right] \\= \E\left[\int_0^T\int_{\R_+^2} (ax{+}bz{+}cxz) G(x,z)\Dom{\nu}^K(\diff s, \diff x,\diff z)\right] \le \sqrt{C_L}T\|G\|_{\infty} . <{+}\infty.
\end{multline*}
With Relation~\eqref{eqqk} and the integrability properties proven just above, we obtain that, for $\eps{>}0$, there exists $n_0$ such that if $n{\ge}n_0$, then the relation
\begin{multline*}
 \P\left(\sup_{0\le t\le T}\left|\int_0^tL_{\eps_{n}}(u)G\left(\Dom{X}_{\eps_{n}}^K(u),\Dom{Z}_{\eps_{n}}^K(u)\right) \diff u\right.\right.\\\left.\left.{-}\int_0^T\int_{\R_+^2} (ax{+}bz{+}cxz) G(x,z)\Dom{\nu}^K(\diff s, \diff x,\diff z)\right|{\ge}\eta \right)\leq\eps
\end{multline*}
holds. Lemma~\ref{CVX2} is proved.

\subsection{Proof of Lemma~\ref{OccMeasProp}}
Following~\citet{papanicolalou_martingale_1977} and~\citet{karatzas_averaging_1992}, we first show that there exists an optional process  $(\Dom{\Gamma}^{K}_s)$, with values in the set of probability distributions on $\R_+^2$ such that, almost surely, for any bounded Borelian function $G$ on $\R_+{\times}\R_+^2$,
\begin{equation}\label{GammP}
\int_{\R_+{\times}\R_+^2} G(s,x,z)\Dom{\nu}^{K}(\diff s, \diff x, \diff z)=\int_{\R_+{\times}\R_+^2} G(s,x,z)\Dom{\Gamma}_s^{K}(\diff x, \diff z)\,\diff s.
\end{equation}
Recall that the optional $\sigma$-algebra is the smallest $\sigma$-algebra containing adapted \cadlag processes.
See Section~VI.4 of~\citet{rogers_diffusions_2000} for example.
This is a simple consequence of Lemma~1.4 of~\citet{karatzas_averaging_1992} and the fact that, due to Relation~\eqref{OddMDef}, the measure  $\Dom{\nu}^{K}(\diff s,\R^2)$ is the Lebesgue measure on $[0,T]$.

Let $f{\in}{\cal C}_b^1(\R_+^2)$ be a bounded ${\cal C}^1$-function on $\R_+^2$ with bounded partial derivatives, we have the relation
\begin{multline}\label{jka1}
\eps f\left(\Dom{X}^{K}_\eps(t),\Dom{Z}^{K}_\eps(t)\right)=\eps f(\Dom{x}_0,\Dom{z}_0){+}\eps \Dom{M}_\eps^f(t)\\{+}\int_0^t B^F_{K{\wedge}\Dom{W}^{K}_\eps(s)}(f)\left(\Dom{X}^{K}_\eps(s),\Dom{Z}^{K}_\eps(s)\right) \diff s,
\end{multline}
where, for $t{\ge}0$,  if $(\Dom{V}^K_\eps(s)){\steq{def}}(\Dom{X}^{K}_\eps(s),\Dom{Z}^{K}_\eps(s))$,
\begin{multline*}
\Dom{M}_{\eps}^f(t)\steq{def}\hspace{-2mm}\int_0^t\hspace{-2mm}\left(f\left(\Dom{V}^{K}_\eps(s-){+}\left(K{\wedge}\Dom{W}^{K}_\eps(s),1\right)\right){-}f\left(\Dom{V}^{K}_\eps(s{-})\right)\right)\left[{\cal N}_{\lambda/\eps}(\diff s){-}\frac{\lambda}{\eps}\diff s\right]\\
  {+}\int_0^t\left(f\left(\Dom{V}^{K}_\eps(s-){+}(0,1)\right){-}f\left(\Dom{V}^{K}_\eps(s-)\right)\right)\left[{\cal N}_{\Dom{\beta}/\eps,\Dom{X}^{K}_\eps}(\diff s){-}\frac{\Dom{\beta}\left(\Dom{X}_{\eps}^{K}(s)\right)}{\eps}\diff s\right],
\end{multline*}
Proposition~\ref{ITight} shows that the martingale $(\eps\Dom{M}_{\eps}^f(t))$ is converging in distribution to~$0$ as $\eps$ goes to~$0$.

Relation~\eqref{jka1} gives therefore the convergence in distribution
\[
\lim_{\eps\to 0} \left(\int_0^t B^F_{K{\wedge}\Dom{W}^{K}_\eps (s)}(f)\left(\Dom{X}_\eps^{K}(s),\Dom{Z}_\eps^{K}(s)\right) \diff s\right){=}0.
\]

The convergence in distribution of $(\Dom{\Omega}^{K}_{\eps_n}(t), \Dom{W}^{K}_{\eps_n}(t), \Dom{\nu}_{\eps_n}^{K})$,  Proposition~\ref{CVX2} and Relation~\eqref{GammP} give that, for any $f$ in ${\cal C}_b^1(\R_+^2)$, the relation
\begin{equation}\label{EqGen}
\left(\int_0^t \int_{\R_+^2} B^F_{K{\wedge}\Dom{w}^{K}(s)}(f)(x,z)\Dom{\Gamma}_s^{K}(\diff x,\diff z) \diff s,\, 0{\leq}t{\leq}T\right)=(0,\, 0{\leq}t{\leq}T).
\end{equation}
holds with probability $1$.

Let $(f_n)$ be a dense countable sequence in ${\cal C}_b^1(\R_+^2)$ and ${\cal E}_1$ be the event,  where Relation~\eqref{EqGen} holds for all $f{=}f_n$, $n{\ge}1$.
Note that  that $\P\left({\cal E}_1\right){=}1$.
On ${\cal E}_1$, there exists a  (random) subset $S_1$ of $[0,T]$ with Lebesgue measure $T$ such that
\[
 \int_{\R_+^2} B^F_{K{\wedge}\Dom{w}^{K}(s)}(f_n)(x,z)\Dom{\Gamma}_s^{K}(\diff x,\diff z)=0, \forall s{\in}S_1 \text{ and }\forall  n{\ge}1,
 \]
 and, consequently,
\[
 \int_{\R_+^2} B^F_{K{\wedge}\Dom{w}^{K}(s)}(f)(x,z)\Dom{\Gamma}_s^{K}(\diff x,\diff z)=0, \forall s{\in}S_1 \text{ and }\forall f{\in} {\cal C}_b^1(\R_+^2).
 \]
 By Proposition~\ref{InvPropFP}, for $s{\in}S_1$, the probability distribution $\Dom{\Gamma}_s^{K}$ is the invariant distribution $\Pi_{K{\wedge}\Dom{w}^{K}(s)}$.
Lemma~\ref{OccMeasProp} is proved.
\section{Shot-Noise Processes}\label{ShotSec}
This section presents several technical results on {\em shot-noise} processes which are crucial for the proof of Theorem~\ref{theorem:homog}.
See~\citet{Schottky}, \citet{Rice} and~\citet{gilbert_amplitude_1960} for an introduction.

\subsection{A Scaled Shot-Noise Process}\label{ScSN}
Recall that  $(S^x_{\eps}(t))$, with initial point $x{\ge}0$, has been introduced by Definition~\ref{SNPdef}.
We will have the following conventions,
\[
\left(S_{\eps}(t)\right)\steq{def}\left(S^0_{\eps}(t)\right), \left(S^x(t)\right)\steq{def}\left(S^x_{1}(t)\right) \text{ and }
\left(S(t)\right)\steq{def}\left(S^0_{1}(t)\right).
\]

The process $(S(t))$ is in fact the standard shot-noise process of Lemma~\ref{lemma:expofilter} associated to the Poisson process ${\cal N}_\lambda$, for $t{\ge}0$, 
\begin{equation}\label{eq:slambda0}
S(t)=\int_0^t e^{-(t-s)}{\cal N}_\lambda(\diff s)\steq{dist}\int_0^t e^{-s}{\cal N}_\lambda(\diff s).
\end{equation}
In particular $(S(t))$ is a stochastically non-decreasing process, i.e.\ for $y{\ge}0$ and $s{\le}t$,
\begin{equation}
\label{StocMon}
\P\left(S(s){\ge}y\right)\le \P\left(S(t){\ge}y\right).
\end{equation}
A classical formula for Poisson processes, see Proposition~1.5 of~\citet{robert_stochastic_2003} for example, gives the relation, for $\xi{\in}\R$,
\begin{equation}\label{eqfg1}
\E\left[e^{\xi S(t)}\right]=\exp\left({-}\lambda\int_0^t \left(1{-}\exp\left(\xi e^{-s}\right)\right)\diff s\right),
\end{equation}
in particular $\E\left[S^x(t)\right]{=}x\exp(-t){+}\lambda\left(1{-}\exp({-}\lambda t)\right)$.
It also  shows that $(S^x(t))$ is converging in distribution to $S(\infty)$ such that,
\[
\E\left[e^{\xi S(\infty)}\right]=\exp\left({-}\lambda\int_0^{+\infty} \left(1{-}\exp\left(\xi e^{-s}\right)\right)\diff s\right){<}{+}\infty.
\]

It is easily seen that $(S^x_{\eps}(t)){\steq{dist}} (S^x(t/\eps))$and thus with Relation~\eqref{eqSN}, $(S^x_{\eps}(t))$ can be represented as, for $t{\ge}0$,
\begin{equation}\label{eqU}
  S^x_{\eps}(t)\steq{def} xe^{-t/\eps}{+} \int_0^t e^{-(t-s)/\eps}{\cal N}_{\lambda/\eps}(\diff s)
  = xe^{-t/\eps}{+} S_{\eps}(t).
\end{equation}

We remind here the results of Proposition~\ref{lemma:Slambda}, that will be proved in the following paragraph.
For $\xi{\in}\R$ and $x{\geq}0$,  the convergence in distribution of the processes
  \[
\lim_{\eps\searrow 0} \left(\int_0^t e^{\xi S^x_{\eps}(u)}\diff u\right)=\left(\E\left[e^{\xi S(\infty)}\right]t\right)
  \]
 holds, and
\[
\sup_{\substack{0{<}\eps{<}1\\0{\leq}t{\leq}T}} \E\left[e^{\xi S_{\eps}(t)}\right] {<}{+}\infty.
\]

\begin{proof}[Proof of Proposition~\ref{lemma:Slambda}]
Let $T_1$ and $T_2$ be two stopping times bounded by $N$, $\theta{>}0$, and verifying $0{\le}T_2{-}T_1{\le}\theta$.
Using Relation~\eqref{eqU} and the strong Markov property of Poisson processes, we have that
\begin{multline*}
  \E\left[\int_{T_1}^{T_2} e^{\xi S^x_{\eps}(u)}\diff u\right]=  \eps\E\left[\int_{T_1/\eps}^{T_2/\eps} e^{\xi S^x(u)}\diff u\right]\\
  =\eps \E\left[\int_{0}^{(T_2-T_1)/\eps} e^{\xi S^{S^x(T_1/\eps)}(u)}\diff u\right]\le\eps \E\left[e^{\xi S^x(T_1/\eps)}\E\left[\int_{0}^{\theta/\eps} e^{\xi S(u)}\diff u\right]\right]
\\  \le \theta e^{\xi x}\E\left[e^{\xi S(N/\eps)}\right]\E\left[e^{\xi S(\infty)}\right]
  \le \theta e^{\xi x}\E\left[e^{\xi S(\infty)}\right]^2,
\end{multline*}
holds, by stochastic monotonicity of $(S(t))$ of Relation~\eqref{StocMon}.

Aldous' Criterion, see Theorem~VI.4.5 of~\citet{jacod_limit_1987} gives that the family of processes
\[
\left(\int_0^te^{\xi S_{\eps}(u)}\diff u\right),
\]
is tight when $\eps$ goes to $0$.
For $p{\ge}1$ and a fixed vector $(t_i){\in}\R_+^p$, the ergodic theorem for the Markov process $(S(t))$ gives the almost-sure convergence of
\begin{multline*}
\lim_{\eps\to 0}\left(\int_0^{t_i}e^{\xi S_{\eps}(u)}\diff u, i=1,\ldots,p\right)=\lim_{\eps\to 0}\left(\eps\int_0^{t_i/\eps}e^{\xi S(u)}\diff u,i=1,\ldots,p\right)\\=\left(\E\left[e^{\xi S(\infty)}\right]t_i,i=1,\ldots,p\right).
\end{multline*}
Hence, due to the tightness property, the convergence also holds in distribution for the processes
\[
\lim_{\eps\to 0}\left(\int_0^te^{\xi S_{\eps}(u)}\diff u\right)=\left(\E\left[e^{\xi S(\infty)}\right]t\right).
\]

The last part is a direct consequence of the identity $(S^x_{\eps}(t)){\steq{dist}} (S^x(t/\eps))$,  and  of Relation~\eqref{eqfg1}, which gives
\begin{multline*}
\E\left[e^{\xi S_{\eps}(t)}\right]=\exp\left({-}\lambda\int_0^{t/\eps} \left(1{-}\exp\left(\xi e^{-s}\right)\right)\diff s\right)
\\\le \exp\left({-}\lambda\int_0^{+\infty} \left(1{-}\exp\left(\xi e^{-s}\right)\right)\diff s\right).
\end{multline*}
The proposition is proved. 
\end{proof}

\subsection{Interacting Shot-Noise Processes}
\label{secsec:interacting}
Recall that $(R_{\eps}(t))$ defined by Relation~\eqref{Reps} is a shot-noise process with intensity equal to the shot-noise process $(S_{\eps}(t))$.

We start with a simple result on moments of functionals of Poisson processes. 
\begin{lemma}\label{MomPois}
  If ${\cal Q}$ is a Poisson point process on $\R_+$ with a positive Radon intensity measure $\mu$ and $f$ is a Borelian function such that
  \[
  I_k(f)\steq{def}\int f(u)^k\mu(\diff u) < {+}\infty,  1{\le}k{\le} 4,
  \]
  then
  \[
  \E\left[\left(\int f(u) {\cal Q}(\diff u)\right)^4\right]=\left(\rule{0mm}{4mm}I_4{+}6I_{1}^2I_{2}{+}4I_{1}I_{3}{+}3I_{2}^2{+}I_{1}^4\right)(f).
  \]
\end{lemma}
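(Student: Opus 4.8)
The plan is to identify $X{\steq{def}}\int f(u){\cal Q}(\diff u)=\sum_{x\in{\cal Q}}f(x)$ as a random variable whose cumulants are exactly the integrals $I_k(f)$, and then to read off the fourth moment from the universal moment--cumulant relation. Concretely, the exponential formula for Poisson processes, as in Relation~\eqref{eqfg1} and \citet{kingman_poisson_1992}, gives, for $\theta$ in a neighbourhood of $0$ where the integral converges,
\[
\log\E\left[e^{\theta X}\right]=\int\left(e^{\theta f(u)}{-}1\right)\mu(\diff u)=\sum_{k\ge 1}\frac{\theta^k}{k!}I_k(f),
\]
so that the $k$-th cumulant of $X$ is $\kappa_k{=}I_k(f)$. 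Substituting into the classical identity $m_4{=}\kappa_4{+}4\kappa_1\kappa_3{+}3\kappa_2^2{+}6\kappa_1^2\kappa_2{+}\kappa_1^4$ for the fourth raw moment yields exactly the announced expression. This is the conceptual reason for the formula.

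Since $\mu$ may be infinite and $f$ unbounded, the exponential moment need not be finite for $\theta{>}0$, so I would make the argument rigorous by a direct combinatorial expansion rather than by differentiating the Laplace functional. Writing $X^4$ as a sum over ordered $4$-tuples $(x_1,x_2,x_3,x_4)$ of atoms of ${\cal Q}$, with repetitions allowed, and grouping the tuples according to the set partition $\pi$ of $\{1,2,3,4\}$ recording which indices select the same atom, one obtains
\[
\E\left[X^4\right]=\sum_{\pi}\prod_{B\in\pi}I_{|B|}(f),
\]
where the contribution of each partition type is evaluated through the $m$-th factorial moment measure of a Poisson process, which equals $\mu^{\otimes m}$, by the Campbell/Mecke formula, see \citet{kingman_poisson_1992}. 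A block $B$ thus contributes the factor $\int f^{|B|}\diff\mu=I_{|B|}(f)$.

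It then remains to count the partitions of $\{1,2,3,4\}$ by type: one of type $1^4$, giving $I_1^4$; six of type $2,1,1$, giving $6I_1^2I_2$; three of type $2,2$, giving $3I_2^2$; four of type $3,1$, giving $4I_1I_3$; and one of type $4$, giving $I_4$. Summing these contributions reproduces $I_4{+}6I_1^2I_2{+}4I_1I_3{+}3I_2^2{+}I_1^4$, as claimed.

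The combinatorics above is routine; the only delicate point, and the main obstacle, is the integrability bookkeeping needed to justify taking expectations term by term and reorganising the sum. I would first establish the identity for $f{\ge}0$ bounded and supported on a set of finite $\mu$-measure, where ${\cal Q}$ has finitely many atoms and all sums are finite, then pass to general $f{\ge}0$ by monotone convergence, each limit being finite precisely because $I_1(f),\ldots,I_4(f){<}{+}\infty$, which simultaneously shows $X{\in}L^4$. The extension to signed $f$ follows by applying the same expansion to $|f|$ to control absolute convergence, the hypotheses being read as absolute integrability $\int|f|^k\diff\mu{<}{+}\infty$ for $1{\le}k{\le}4$.
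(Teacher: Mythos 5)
Your proposal is correct, and its first half is exactly the paper's argument: the paper reduces to non-negative bounded $f$ with compact support, writes the Laplace functional $\E[\exp(\xi\int f\,\diff{\cal Q})]=\exp(\int(e^{\xi f}-1)\diff\mu)$, and differentiates four times at $\xi=0$ --- which is precisely your observation that the cumulants of $X$ are the $I_k(f)$, combined with the moment--cumulant identity $m_4=\kappa_4+4\kappa_1\kappa_3+3\kappa_2^2+6\kappa_1^2\kappa_2+\kappa_1^4$. Where you diverge is in what you treat as the rigorous backbone: instead of differentiating the truncated Laplace transform and passing to the limit, you expand $X^4$ over ordered $4$-tuples of atoms, group by the coincidence partition, and evaluate each block through the factorial moment measures $\mu^{\otimes m}$ of the Poisson process; the $15$ partitions of $\{1,2,3,4\}$ then reassemble into $I_4+6I_1^2I_2+4I_1I_3+3I_2^2+I_1^4$. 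Both routes are sound and lead to the same combinatorics; the Campbell/Mecke route buys you a computation that never needs an exponential moment, while the paper's route is shorter because the truncation makes the Laplace transform entire in $\xi$. You are also more explicit than the paper about the truncation and monotone-convergence bookkeeping (and about reading the hypotheses as $\int|f|^k\diff\mu<\infty$ for signed $f$), which the paper compresses into a single sentence; that added care is welcome but not a correction of any error.
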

\begin{proof}

It is enough to prove the inequality for non-negative bounded Borelian functions $f$ with compact support on $\R_+$.
  
The formula for the Laplace transform of Poisson point processes, see Proposition~1.5 of~\citet{robert_stochastic_2003},  gives for $\xi{\ge}0$,
  \[
  \E\left[\exp\left(\xi \int_0^{+\infty} f(u) {\cal Q}(\diff u)\right)\right]=
  \exp\left(\int_0^{+\infty} \left(e^{\xi f(u)}{-}1\right)\mu(\diff u)\right).
  \]
The proof is done in a straightforward way by differentiating the last identity with respect to $\xi$ four times and then set $\xi{=}0$.
\end{proof}

\begin{proposition}\label{R4Mom}
The inequality
\[
\sup_{\eps{\in}(0,1), t\geq 0}\E\left[R_\eps(t)^4\right] <{+}\infty
\]
holds.
\end{proposition}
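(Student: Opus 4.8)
The plan is to condition on the driving shot-noise intensity and reduce to the Poisson fourth-moment formula of Lemma~\ref{MomPois}. First I would record, via Lemma~\ref{lemma:expofilter} applied to Relation~\eqref{Reps} with $R_\eps(0){=}0$, the representation
\[
R_\eps(t)=\int_0^t e^{-\gamma(t-s)/\eps}\,\mathcal{N}_{I/\eps,S_\eps}(\diff s).
\]
The key observation is that $(S_\eps(t))$ is a functional of $\mathcal{N}_{\lambda/\eps}{=}\mathcal{P}_1((0,\lambda/\eps],\cdot\,)$, hence $\mathcal{P}_1$-measurable, while $\mathcal{N}_{I/\eps,S_\eps}(\diff s){=}\mathcal{P}_2((0,S_\eps(s{-})/\eps],\diff s)$ is built from the independent Poisson measure $\mathcal{P}_2$. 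Consequently, conditionally on $\mathcal{P}_1$, the predictability of $s{\mapsto}S_\eps(s{-})$ together with the restriction property of Poisson processes makes $\mathcal{N}_{I/\eps,S_\eps}$ an inhomogeneous Poisson process on $\R_+$ with the now-deterministic intensity measure $\mu_\eps(\diff s){=}(S_\eps(s{-})/\eps)\diff s$.

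Applying Lemma~\ref{MomPois} conditionally on $\mathcal{P}_1$, with $f(s){=}e^{-\gamma(t-s)/\eps}$ and the moments $I_k{=}\int_0^t f(s)^k\mu_\eps(\diff s){=}\frac1\eps\int_0^t e^{-k\gamma(t-s)/\eps}S_\eps(s)\diff s$, yields
\[
\E\left[R_\eps(t)^4\mid\mathcal{P}_1\right]=I_4+6I_1^2I_2+4I_1I_3+3I_2^2+I_1^4 .
\]
It then remains to bound the expectation of the right-hand side uniformly in $\eps{\in}(0,1)$ and $t{\ge}0$. The change of variable $u{=}(t-s)/\eps$ gives $\frac1\eps\int_0^t e^{-k\gamma(t-s)/\eps}\diff s{=}\int_0^{t/\eps}e^{-k\gamma u}\diff u{\le}1/(k\gamma)$, so each $I_k$ is an integral of $(S_\eps(s))$ against a kernel of total mass at most $1/(k\gamma)$, uniformly in $\eps$ and $t$.

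To bound the expectation of each monomial I would expand it as a multiple integral over some $[0,t]^m$ of $\E\bigl[\prod_{i=1}^m S_\eps(s_i)\bigr]$ against the corresponding products of exponential kernels, and control the joint moment by H\"older's inequality, $\E\bigl[\prod_{i=1}^m S_\eps(s_i)\bigr]{\le}\prod_{i=1}^m\E[S_\eps(s_i)^m]^{1/m}$. The decisive input is that the marginal moments of $S_\eps(s)$ are bounded uniformly over all $s{\ge}0$ and $\eps{\in}(0,1)$: although Proposition~\ref{lemma:Slambda} is stated for $t{\le}T$, its proof via Relation~\eqref{eqfg1} in fact gives $\E[e^{\xi S_\eps(t)}]{\le}\E[e^{\xi S(\infty)}]{<}{+}\infty$ for every $t{\ge}0$ and $\eps{>}0$, so all moments of $S_\eps$ are dominated by those of $S(\infty)$. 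Combining this with the mass bounds on the kernels, every term of the fourth-moment formula is majorized by a finite constant independent of $\eps$ and $t$, which gives the claimed uniform bound.

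The only genuinely delicate step is the conditional Poisson reduction of the first paragraph; once it is in place the remainder is routine bookkeeping combining H\"older's inequality with the uniform exponential-moment estimate for the scaled shot-noise process. The main obstacle I anticipate is making the uniformity over all $t{\ge}0$ (rather than $t{\le}T$) fully rigorous, which is why the sharpened form of the bound implicit in Proposition~\ref{lemma:Slambda} is worth isolating explicitly.
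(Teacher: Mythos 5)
Your proposal is correct, and its decisive step --- representing $R_\eps(t)$ as an exponentially weighted integral against $\mathcal{N}_{I/\eps,S_\eps}$, observing that $S_\eps$ is $\mathcal{P}_1$-measurable and hence independent of $\mathcal{P}_2$, and applying Lemma~\ref{MomPois} conditionally on $\mathcal{P}_1$ to get the fourth-moment polynomial in the $I_k$'s --- is exactly the paper's argument (your $I_k$ is the paper's $J_{k,\eps}(t)$). Where you diverge is in bounding the expectation of that polynomial. The paper swaps the order of integration in $J_{k,\eps}(t)$ (a ``shot-noise of shot-noise'' computation via Fubini) to rewrite it as a single Poisson integral against $\mathcal{N}_\lambda$, dominated in distribution by the time- and $\eps$-independent variable $\Dom{J}_k=\int_0^\infty(e^{-k\gamma v}+e^{-v})\mathcal{N}_\lambda(\diff v)$, and then invokes finiteness of all moments of $\Dom{J}_k$ plus Cauchy--Schwarz. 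You instead bound the kernel mass by $1/(k\gamma)$, expand each monomial as a multiple integral of $\E[\prod_i S_\eps(s_i)]$, and control the joint moment by generalized H\"older together with the uniform-in-$t$ exponential moment bound $\E[e^{\xi S_\eps(t)}]\le\E[e^{\xi S(\infty)}]$, which indeed follows for all $t\ge0$ from Relation~\eqref{eqfg1} as you note. Both routes are sound and of comparable length; yours is slightly more robust in that it avoids the denominators $|\gamma k-1|$ appearing in the paper's bound, which degenerate when $\gamma k=1$ for some $k\in\{1,\dots,4\}$ and would there require a separate (easy) treatment, whereas the paper's route has the aesthetic advantage of producing a single explicit dominating random variable independent of $\eps$ and $t$.
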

\begin{proof}
  Denote, for $t{\ge}0$,
  \[
  J_{k,\eps}(t)\steq{def} \int_0^t  e^{-\gamma k (t-u)/\eps}\frac{S_{\eps}(u)}{\eps}\diff u,
  \]
  the identity $(S_{\eps}(t)){\steq{dist}} (S(t/\eps))$ and Relation~\eqref{eq:slambda0} coupled with Fubini's Theorem give the relations
\begin{multline*}
J_{k,\eps}(t){=}\int_0^{t/\eps}  e^{-\gamma k (t/\eps-u)}S(u)\diff u
{=}\frac{1}{\gamma k{-}1}\int_0^{t/\eps} \left(e^{-(t/\eps-v)}{-}e^{-\gamma k (t/\eps-v)}\right) {\cal N}_\lambda(\diff v)\\
\steq{dist} \frac{1}{\gamma k{-}1}\int_0^{t/\eps} \left(e^{-v}{-}e^{-\gamma kv}\right) {\cal N}_\lambda(\diff v)
\le\frac{1}{|\gamma k{-}1|} \Dom{J}_{k}
\end{multline*}
with
\[
\Dom{J}_{k}\steq{def} \int_0^{{+}\infty} \left(e^{-k \gamma v}{+}e^{-v}\right) {\cal N}_\lambda(\diff v).
\] 
Relation~\eqref{eqSN} applied to $R_\eps(t)$ gives
\[
R_\eps(t)=\int_0^t e^{-\gamma (t-u)/\eps} {\cal P}_2\left(\left(0,\frac{S_{\eps}(u{-})}{\eps}\right],\diff u\right).
\]
The quantity $S_{\eps}(u)$ is a functional of the point process ${\cal P}_1$ and is therefore independent of the Poisson point process ${\cal P}_2$. Lemma~\ref{MomPois} gives therefore that
\begin{multline*}
 \E\left[R_\eps(t)^4\mid {\cal P}_1\right]=
  J_{4,\eps}(t){+}6J_{1,\eps}(t)^2J_{2,\eps}(t){+}4J_{1,\eps}(t)J_{3,\eps}(t){+}3J_{2,\eps}(t)^2{+}J_{1,\eps}(t)^4,
\end{multline*}
hence,
\[
\E\left[R_\eps(t)^4\right] \le  \E\left[\rule{0mm}{4mm}
\frac{\Dom{J}_{4}}{|4\gamma{-}1|}{+}\frac{6\Dom{J}_{1}^2\Dom{J}_{2}}{|\gamma{-}1|^2|2\gamma{-}1|}{+}\frac{4\Dom{J}_{1}\Dom{J}_{3}}{|\gamma{-}1||3\gamma{-}1|}{+}\frac{3\Dom{J}_{2}^2}{|2\gamma{-}1|^2}{+}\frac{\Dom{J}_{1}^4}{|\gamma{-}1|^4}\right].
\]
Again with Proposition~\ref{lemma:Slambda} we obtain that, for $k{\ge}1$, the variable $\Dom{J}_{k}$ has finite moments of all orders, therefore by Cauchy-Shwartz' Inequality, the right-hand side of the last inequality is finite.
The proposition is proved.
\end{proof}

\subsection{Tightness Properties}\label{TiSecOM}
We can now prove  Proposition~\ref{Fam1} of Section~\ref{section:theorems}. 
\begin{proof}[Proof of Proposition~\ref{Fam1}]
With the criterion of the modulus of continuity, see~\citet{billingsley_convergence_1999}, it is enough to prove the tightness of the second family of processes. Indeed, by Cauchy-Schwartz' Inequality, for $0{\le}s{\le}t$,
\[
\int_s^t  H_{\eps}(u)\,\diff u\leq \sqrt{t{-}s}\sqrt{\int_s^t  H_{\eps}(u)^2\,\diff u},
\]
and
\[
\int_s^t R_{\eps}(u)S_{\eps}(u)\,\diff u\le
\sqrt{\int_s^t R_{\eps}(u)^2\,\diff u}
\sqrt{\int_s^t S_{\eps}(u)^2\,\diff u}.
\]
Again with Cauchy-Schwartz' Inequality, we have
\[
\int_s^t  H_{\eps}(u)^2\,\diff u\leq \sqrt{t{-}s} \sqrt{\int_s^t H_{\eps}(u)^4\,\diff u},
\]
hence, with Relation~\eqref{eqBouExp} for $H_\eps{=}S_{\eps}$ and Proposition~\ref{R4Mom} for $H_\eps{=}R_{\eps}$, there exists a constant $C$ independent of $\eps$ and $s$, $t$ such that
\[
\E\left[\left(\int_s^t  H_{\eps}(u)^2\,\diff u\right)^{2} \right]\leq C (t{-}s)^2.
\]
Kolmogorov-\v{C}entsov's Criterion, see Theorem~2.8 and Problem~4.11 of~\citet{Karatzas}, implies that the family of variables
\[
\left(\int_0^t  H_{\eps}(u)^2\,\diff u\right)
\]
is tight. The proposition is proved. 
  
\end{proof}

\section{Equilibrium of Fast Processes}\label{section:invariant}
For $w{\in}K_W$, recall that the Markov process $(X^w(t),Z^w(t))$ of Definition~\ref{DefFastw} is such that
\begin{align}
\diff X^w(t) &\displaystyle = {-}X^w(t)\diff t{+}w\mathcal{N}_{\lambda}(\diff t){-}g\left(X^w(t{-})\right)\mathcal{N}_{\beta,X^w}\left(\diff t\right)\label{MarkXw}\\
\diff Z^w(t) &\displaystyle = ({-}\gamma{\odot}Z^w(t){+}k_0)\diff t{+}k_1(Z^w(t{-}))\mathcal{N}_{\lambda}(\diff t){+}k_2(Z^w(t{-}))\mathcal{N}_{\beta,X^w}(\diff t).\label{MarkZw}
\end{align}

\begin{proposition}\label{InvPropFP}
 Under the conditions of Sections~\ref{SecPost} and~\ref{SecZ}, the Markov process $(X^w(t),Z^w(t))$ solution of the SDEs~\eqref{MarkXw} and~\eqref{MarkZw} has a unique invariant distribution $\Pi_w$, i.e. the unique probability distribution $\mu$ on $\R{\times}\R_+^\ell$ such that
\begin{equation}\label{muOm}
\croc{\mu,B_w^F(f)}=\int_{\R{\times}\R_+^\ell} B_w^F(f)(x,z)\mu(\diff x,\diff z)=0,
\end{equation}
for any $f{\in}{\cal C}_b^1(\R{\times}\R_+^\ell)$, where $B^F_w$ is the operator defined by Relation~\eqref{BFGen}.
  \end{proposition}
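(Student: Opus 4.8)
The plan is to treat $(X^w(t),Z^w(t))$ as the piecewise-deterministic Markov process with infinitesimal generator $B^F_w$ of Relation~\eqref{BFGen} --- existence and uniqueness of the sample paths being already guaranteed by Proposition~1 of~\citet{robert_mathematical_2020} --- and to obtain both existence and uniqueness of $\Pi_w$ through a Foster--Lyapunov/Harris argument. For existence, I would first exhibit an explicit Lyapunov function, the simplest candidate being $V(x,z){=}1{+}Ax^2{+}\sum_{i=1}^\ell z_i^2$ for a constant $A$ chosen large enough. Applying $B^F_w$ and using the conditions of Sections~\ref{SecPost} and~\ref{SecZ}, the mean-reversion term ${-}x\partial_x$ contributes ${-}2Ax^2$; the post-synaptic contribution to $x^2$ is $\beta(x)\,g(x)(g(x){-}2x)$, which is bounded above and non-positive for large $x$ since $0{\le}g(x){\le}\max(c_g,x)$; the decay rates $\gamma_i{>}0$ produce $-2\gamma_i z_i^2$; and the $Z$-jumps, bounded by $C_k$, together with the sublinear bound $\beta(x){\le}C_\beta(1{+}|x|)$, only generate terms of order $|x|\,\|z\|$, $\|z\|$, $|x|$ and constants. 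A Young inequality absorbs the cross term $|x|\,\|z\|$ into $x^2{+}\sum_i z_i^2$, so that for $A$ large one obtains a genuine drift inequality $B^F_w V{\le}{-}cV{+}d$ on all of $\R{\times}\R_+^\ell$.

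From this inequality I would deduce $\sup_{t\ge0}\E[V(X^w(t),Z^w(t))]{<}{+}\infty$ for every starting point, whence tightness of the Cesàro-averaged occupation laws $\tfrac1T\int_0^T P^s(u,\cdot)\diff s$. The Feller property (continuity of $u\mapsto\E[f(X^w(t),Z^w(t))]$ for $f{\in}{\cal C}_b$, a consequence of the continuity of $\beta$, $g$, $k_1$, $k_2$ and of the pathwise construction) then yields, by Krylov--Bogolyubov, at least one invariant distribution $\Pi_w$. The characterisation $\croc{\Pi_w,B^F_w(f)}{=}0$ for $f{\in}{\cal C}_b^1$ follows from the Dynkin martingale associated with $B^F_w$, combined with a uniform-integrability argument that controls the unbounded factor $\beta(x)$ appearing in $B^F_w(f)$ via the moment bound furnished by $V$.

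For uniqueness I would invoke the Harris ergodic theorem, for which the drift inequality already supplies the return-to-a-compact-set condition; the remaining point is a minorization condition guaranteeing that every compact set is \emph{petite}, i.e.\ that for some $t_0{>}0$ the kernel $P^{t_0}(u,\cdot)$ dominates a common nontrivial measure uniformly over $u$ in a compact set. I would establish this in two steps. First, a reachability statement: from an arbitrary initial state, the deterministic flow (which drives $X$ towards $0$ and each $Z_i$ towards $k_{0,i}/\gamma_i$), interleaved with a controlled number of Poisson jumps, reaches any neighbourhood of a fixed reference region with positive probability, yielding $\psi$-irreducibility. Second, the existence of a nontrivial absolutely continuous component of $P^{t_0}(u,\cdot)$, obtained by exploiting the randomness of the jump epochs of ${\cal N}_\lambda$, which smooths the law of $X$ exactly as in the classical shot-noise case, and then propagating this smoothing to the $Z$-coordinates through a sequence of at least $\ell{+}1$ jumps. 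The absolute-continuity criteria of~\citet{Beznea} for shot-noise equilibria are the natural tool here. Combining irreducibility, the smooth component and the drift inequality, Harris's theorem then delivers the unique invariant distribution.

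The main obstacle is precisely this last absolute-continuity/minorization step. The difficulty is twofold: the feedback of $X$ into the jump \emph{intensity} $\beta(X(t{-}))$ couples the two coordinate blocks and prevents treating $X$ and $Z$ separately, and the deterministic flow is only partially contracting, so that producing a full-dimensional smooth component of the transition kernel requires a careful rank (Hörmander-type) verification on the family of jump maps $z\mapsto z{+}k_1(z)$ and $z\mapsto z{+}k_2(z)$ composed with the flow. Once this non-degeneracy is checked, every compact set is petite, and Harris's theorem yields the unique invariant distribution $\Pi_w$ characterised by Relation~\eqref{muOm}, which completes the proof.
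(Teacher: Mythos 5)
Your overall architecture (Foster--Lyapunov drift, Krylov--Bogolyubov for existence, Harris for uniqueness) is reasonable, and the drift computation with $V(x,z)=1+Ax^2+\sum_i z_i^2$ is correct: the post-synaptic contribution $\beta(x)g(x)(g(x)-2x)$ is indeed bounded above under the hypotheses on $g$ and $\beta$, and the Young-inequality absorption of the cross term works for $A$ large. The genuine gap is the minorization step, which you yourself flag as ``the main obstacle'' and then leave conditional (``once this non-degeneracy is checked''). Moreover, the specific non-degeneracy you propose to verify --- a H\"ormander-type rank condition on the jump maps $z\mapsto z+k_1(z)$ and $z\mapsto z+k_2(z)$ producing a full-dimensional absolutely continuous component of $P^{t_0}(u,\cdot)$ --- is \emph{not} implied by the standing assumptions: the $k_i$ are only required to be bounded ${\cal C}^1_b$ functions and may degenerate (even vanish), in which case the $Z$-marginal of the invariant law is singular and no full-dimensional density component exists, yet the proposition must still hold. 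Petiteness does not require an absolutely continuous minorizing measure, so your plan is simultaneously incomplete and over-constrained; since this step carries the entire uniqueness claim, the proof is not complete. A secondary omission: the proposition asserts that $\Pi_w$ is the \emph{unique} probability measure satisfying Relation~\eqref{muOm}; your Dynkin/uniform-integrability argument shows that the invariant law annihilates $B^F_w$ on ${\cal C}^1_b$, but you never address the converse (that any solution of Relation~\eqref{muOm} is an invariant distribution), which is the Echeverr\'ia--Weiss-type statement the paper imports from Proposition~9.2 of Ethier and Kurtz.

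For comparison, the paper proceeds differently and, on the problematic point, more economically: it uses the linear Lyapunov function $H(x,z)=x+a\|z\|$ on the invariant region $[-c_0,+\infty)\times\R_+^\ell$ to get integrable return times to a compact set, proves Harris ergodicity of the \emph{embedded} jump chain via a one-jump small-set estimate (conditioning on the first jump being pre-synaptic and exploiting the randomness of the exponential jump epoch), constructs the invariant law by the cycle formula, and then transfers uniqueness from the embedded chain to continuous time through the Palm measure of the stationary marked point process of jump instants. Discretizing along jump times is precisely what lets the paper sidestep the continuous-time petite-set verification that blocks your argument; the price is the Palm-calculus transfer, which you would not need if your minorization could be completed. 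To salvage your route you would have to establish petiteness with respect to a possibly singular minorizing measure adapted to the actual support of the dynamics, or else follow the paper and work with the embedded chain.
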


\begin{proof}
We denote by $(X^w_n,Z^w_n)$  the  embedded Markov chain of the Markov process $(X^w(t),Z^w(t))$, i.e. the sequence of states visited by $(X^w(t),Z^w(t))$ after each jump, of either $\mathcal{N}_{\lambda}$ or $\mathcal{N}_{\beta,X}$.

The proof of the proposition is done in three steps.
We first show that the return time of $(X^w(t),Z^w(t))$ to a compact set of $\R{\times}\R_+^\ell$ is integrable.
Then we prove that the Markov chain $(X^w_n,Z^w_n)$ is Harris ergodic, and consequently that it has a unique invariant measure.
For a general introduction on Harris Markov chains, see~\citet{nummelin_general_2004,meyn_markov_1993}.
Finally, the proof of the proposition uses the classical framework of stationary point processes.

\subsection{Integrability of Return Times to a Compact Subset}
Suppose that $w{\ge}0$.
The conditions of Section~\ref{SecPost} on the functions $\beta$ and $g$, and Relation~\eqref{MarkXw} show that $X^w(t){\ge}{-}c_0$, for all $t{\ge}0$, if $X^w(0){\ge}{-}c_0$, with $c_0{=}c_\beta{+}c_g$. The state space of the Markov process $(X^w(t),Z^w(t))$  can be taken as ${\cal S}{\steq{def}}[{-}c_0,{+}\infty){\times}\R_+^\ell$.

Define, for $(x,z){\in}{\cal S}$ and $0{<}a{\le}1$,
\[
H(x,z)\steq{def} x{+}a \|z\|, \text{ with } \|z\|\steq{def} \sum_{i=1}^{\ell} z_i,
\]
we get that
\begin{multline*}
B_w^F(H)(x,z)= {-}x+\left({-}a\sum_{i=1}^{\ell} \gamma_{i}z_{i}{+}k_{0,i}\right)+\lambda\left(w{+}a\sum_{i=1}^{\ell} k_{1,i}(z)\right) \\+
\beta(x)\left({-}g(x){+}a\sum_{i=1}^{\ell} k_{2,i}(z)\right)
\end{multline*}
hence, with the assumptions of Section~\ref{SecZ} and~\ref{SecPost} on the function $k_.$ and $\beta$, and $a\leq 1$,
\begin{multline*}
B_w^F(H)(x,z)\le {-}x{-}a\underline{\gamma}\|z\| +\ell C_k +\lambda(w{+}\ell a C_k){+}C_\beta\left(1 + x\right)\ell a C_k\\\le
(\ell a C_\beta C_k{-}1)x-a\underline{\gamma}\|z\|+\left(\ell C_k {+} \lambda w {+} \lambda \ell C_k {+} \ell C_\beta C_k\right)\\
\le
(\ell a C_\beta C_k{-}1)x{-}a\underline{\gamma}\|z\|{+}C,
\end{multline*}
where $\underline{\gamma}{>}0$ is the minimum of the coordinates of $\gamma$ and $C$ is a constant independent of $x$, $z$ and $a$.
We fix $0{<}a{\leq}1$ sufficiently small so that $\ell aC_\beta C_k{<}1$ and $K{>}c_0$ such that
\[
C{<}\underline{\gamma}{K}/{2}{-}1 \text{ and } C{<}(1{-}\ell aC_\beta C_k){K}/{2}{-}1.
\]
If $H(x,z){>}K$ then $\max(x,a\|z\|){>}K/2$ and therefore $B_w^F(H)(x,z){\le}{-}1$, $H$ is therefore a Lyapounov function for $B^F_w$.
One deduces that the same result holds for the return time of Markov chain, $(X^w_n,Z^w_n)$ in the set $I_K=\{(x,z):H(x,z){\leq}K\}$.

\subsection{Harris Ergodicity of $(X^w_n,Z^w_n)$}
Proposition~5.10 of~\citet{nummelin_general_2004} is used to show that $I_K$ is a recurrent set.
A {\em regeneration property} would be sufficient to conclude.
In particular, we can prove that $I_K$ is a {\em small set}, that is, there exists some positive, non-trivial, Radon measure $\nu$ on ${\cal S}$ such that,
\begin{equation}\label{Small}
  \P_{(x_0, z_0)}\left(\rule{0mm}{4mm}(X_1^w,Z_1^w){\in}S\right)\ge \nu(S),
\end{equation}
for any Borelian subset $S$ of ${\cal S}$ and all $(x_0, z_0){\in}I_K$.

We denote by $s_1$, resp. $t_1$,  the first instant of ${\cal N}_\lambda$, resp. of ${\cal N}_{\beta,X^w}$,  then, for $(X_0^w,Z_0^w){=}(x_0, z_0){\in}I_K$, by using the deterministic differential equations between jumps, we get
\[
\P_{(x_0, z_0)}\left(s_1{<} t_1\right)
  {=} \E\left[\exp\left({-}\hspace{-2mm}\int_0^{s_1}\hspace{-3mm}\beta(x_0\exp({-}s)\right)\,\diff s\right]\ge
  \E\left[\exp({-}c_\beta^1s_1)\right]{=}p_0{\steq{def}}\frac{\lambda}{\lambda{+}c_\beta^1},
       \]
    since $\beta$ is bounded by some constant $c_\beta^1$ on the interval $[{-}c_0,K]$.

In the following argument, we restrict $X$ to be non-negative, the extension to $[{-}c_0,+\infty]$ is straightforward.
For ${\cal A}{=}[0,A]{\in}{\cal B}(\R_+)$ and ${\cal B}{=}[0,B]{\in}{\cal B}(\R_+^\ell)$, from Equations~\eqref{MarkXw} and~\eqref{MarkZw}, we obtain the relation
\begin{multline*}
  \P_{(x_0, z_0)}\left(\rule{0mm}{4mm}(X_1^w,Z_1^w){\in}{\cal A}{\times}{\cal B}\right)\ge p_0\P\left(\rule{0mm}{4mm}(X_1^w,Z_1^w){\in}{\cal A}{\times}{\cal B}\mid s_1{<}t_1\right)\\
  = p_0\P\left(\left.\substack{\displaystyle x_0e^{-s_1}{+}w {\in}{\cal A},\\\displaystyle (z_0{-}k_0){\odot}e^{-\gamma_{i} s_1}{+}k_0{+}k_1\left((z_0{-}k_0){\odot}e^{-\gamma_{i} s_1}{+}k_0\right){\in}{\cal B}}\right| s_1{<}t_1\right)\\
  \geq \P\left(\left.\substack{\displaystyle x_0e^{-s_1}{+}w {\in}{\cal A},\\\displaystyle H\left((z_0{-}k_0){\odot}e^{-\gamma_{i} s_1}{+}k_0\right){\in}{\cal B}}\right| s_1{<}t_1\right)\\
  =p_0 \P\left(\substack{\displaystyle x_0e^{-\bar{s}_1}{+}w {\in}{\cal A},\\\displaystyle H\left((z_0{-}k_0){\odot}e^{-\gamma_{i} \bar{s}_1}{+}k_0\right){\in}{\cal B}}\right),
\end{multline*}
where $H(z){=} z{+}k_1(z)$,  $\bar{s}_1{\steq{dist}}(s_1{\mid}s_1{\leq}t_1)$.
By using the fact that $k_1$ is in ${\cal C}_b^1(\R_+^\ell,\R_+^\ell)$ by the conditions of Section~\ref{SecZ} and in the same way as Example of Section~4.3.3 page~98 of~\citet{meyn_markov_1993}, we can prove that the random variable
\[
\left(x_0e^{-\bar{s}_1}{+}w,H\left((z_0{-}k_0){\odot}e^{-\gamma_{i} \bar{s}_1}{+}k_0\right)\right)
\]
has a density,   uniformly bounded below by a positive function $h$ on $\R_+{\times}\R_+^{\ell}$, so that
\[
\P_{(x_0, z_0)}\left(\rule{0mm}{4mm}(X_1^w,Z_1^w){\in}A{\times}B\right)\ge\int_{A{\times}B} h(x,z)\,\diff x\diff z, \forall A{\in}{\cal B}(\R_+), B{\in}{\cal B}(\R_+^\ell),
\]
for all $(x_0,z_0){\in}I_K)$. This relation is then extended to all Borelian subsets $S$ of ${\cal S}$, so that Relation~\eqref{Small} holds.
Proposition~5.10 of~\citet{nummelin_general_2004} gives therefore that $(X^w_n,Z^w_n)$ is Harris ergodic.

If $w{<}0$, the last two steps can be done in a similar way.
In this case, the process $(-X^w(t))$ satisfies an analogous equation with the difference that the process ${\cal N}_{\beta,X^w}$ does not jump when ${-}X^w(t){>}c_\beta$ since $\beta(x){=}0$ for $x{\le}{-}c_\beta$.

\subsection{Characterization of $\Pi_w$}

Let $\widehat{\Pi}_w$ be the invariant probability distribution of $(X^w_n,Z^w_n)$.
With the above notations,
\[
\E_{\widehat{\Pi}_w}\left[\min(s_1,t_1)\right]\leq \E_{\widehat{\Pi}_w}\left[s_1\right]{=}\frac{1}{\lambda}<{+}\infty,
\]
the probability defined by the classical cycle formula,
\[
\frac{1}{\E_{\widehat{\Pi}_w}\left[\min(s_1,t_1)\right]}\E_{\widehat{\Pi}_w}\left[\int_0^{\min(s_1,t_1)}f(X^w(u),Z^w(u))\,\diff u\right],
\]
for any bounded Borelian function on $\R{\times}\R_+^\ell$ is an invariant distribution for the process $(X^w(t),Z^w(t))$.

Proposition~9.2 of~\citet{ethier_markov_2009} shows that any  distribution is invariant for $(X^w(t),Z^w(t))$ if and only if it satisfies Relation~\eqref{muOm}.
It remains to prove the uniqueness of the invariant distribution, using the fact that the embedded Markov chain has a unique invariant distribution.

Although this  is a natural  result, we have not  been able to  find a
reference in the literature.  Most results are stated for discrete
time, the continuous time is usually treated by looking at
the process on  a ``discrete skeleton'', i.e. at  instants multiple of
some  positive constant.  See
Proposition~3.8  of~\citet{asmussen_applied_2003}  for  example.
As this technique is not adapted to our system, we derive a different proof using the Palm measure of the associated stationary point process.

If $\mu$ is some invariant distribution of the Markov process $(X^w(t),Z^w(t))$,  we build a stationary version $((X^w(t),Z^w(t)), t{\in}\R)$ of it on the whole real line. In particular, we have that $(X^w(t),Z^w(t)){\steq{dist}}\mu$, for all $t{\in}\R$.

We denote by $(S_n,n{\in}\Z)$ the non-decreasing sequence of the jumps (due to ${\cal N}_\lambda$ and ${\cal N}_{\beta,X^w}$), with the convention $S_0{\le}0{<}S_1$
The sequence  $((X^w(S_n),Z^w(S_n)), n{\ge}0)$ has the same distribution as  the process $((X^w_n,Z^w_n), n{\ge}0)$, the Markov chain with initial state $(X^w(S_0),Z^w(S_0))$.
Since, for any $t{\in}\R$,
\[
\left(\rule{0mm}{4mm}(X^w(s{+}t),Z^w(s{+}t)), s{\in}\R\right)\steq{dist}\left(\rule{0mm}{4mm}(X^w(s),Z^w(s)), s{\in}\R\right),
\]
the marked point process ${\cal T}{\steq{def}}\left(\rule{0mm}{4mm}S_n,(X^w(S_n),Z^w(S_n)\right), n{\in}\Z)$ is a stationary point process, i.e.
  \[
  ((S_n,X^w(S_n),Z^w(S_n), n{\in}\Z) \steq{dist} ((S_n{-}t,X^w(S_n),Z^w(S_n), n{\in}\Z), \quad \forall t{\in}\R.
  \]
The {\em Palm measure} of ${\cal T}$ is a probability distribution $\widehat{Q}$ such that the sequence \break $((S_n{-}S_{n-1},X^w(S_n),Z^w(S_n), n{\in}\Z)$ is stationary.   See Chapter~11 of~\citet{robert_stochastic_2003} for a quick presentation of stationary point processes and Palm measures.
  
 Under $\widehat{Q}$, the Markov chain $((X^w(S_n),Z^w(S_n)), n{\ge}0)$ is at equilibrium.
 Using Harris ergodicity, we have proved in the previous section that the Markov chain $((X^w_n,Z^w_n), n{\ge}0)$ has a unique invariant measure.
 Considering that both sequences $((X^w_n,Z^w_n), n{\ge}0)$ and $((X^w(S_n),Z^w(S_n)), n{\ge}0)$ have the same distribution, we have that $\widehat{Q}\left( \R_+^{\Z}, \cdot\right)$ is uniquely determined.

Moreover, remembering that,
\[
    \widehat{Q}\left(S_n{-}S_{n-1}{>}t\right) = \E_{\widehat{Q}}\left[ \exp\left( {-}\int_{0}^t \beta\left(X^w(S_{n-1})e^{-s}\right)\diff s \right) \right]
\]
We have that $\widehat{Q}$ is entirely determined by the ergodic distribution of the embedded Markov chain and consequently that the Palm measure $\widehat{Q}$ is unique.
By Proposition~11.5 of~\citet{robert_stochastic_2003}, the distribution of ${\cal T}$ is expressed with $\widehat{Q}$.

We have, for every bounded function $f$,
\[
    \E_{\mu}\left[ f(X^w(0),Z^w(0)) \right] = \E_{\cal T}\left[f(X^w(S_0)e^{S_0},Z^w(S_0){\odot}e^{\gamma S_0})\right],
\]
which uniquely determines the invariant distribution $\mu$.

 The proposition is proved.
\end{proof}
\section{Averaging Principles for Discrete Models of Plasticity}
\label{app:alternative}

In this section, we present a general discrete model of plasticity, state the associated averaging principle theorem and give a sketch of its proof.
We will only point out the differences with the proof of  the main result of  this paper, Theorem~\ref{theorem:homog}.

For this model of plasticity, the membrane potential $X$, the plasticity processes $Z$ and the synaptic weight $W$ are integer-valued variables.
This system is illustrated in Section 7 of~\citet{robert_mathematical_2020} for calcium-based models.
It amounts to represent these three quantities $X$, $Z$ and $W$ as multiple of a ``quantum'' instead of a continuous variable.
The leaking mechanism in particular, the term corresponding to ${-}\gamma Y(t)\diff t$ in the continuous model, $Y{\in}\{X,Z,W\}$ and $\gamma{>}0$, in the SDEs, is represented by the fact that each quantum leaves the system at a fixed rate $\gamma$.

The main advantage of this model is that simple analytical expressions of the invariant distribution are available.

\begin{definition}\label{def:discretemodel}
The SDEs for the discrete model are
\begin{equation}
\label{eq:discretemodel}
\begin{cases}
\quad \diff X(t) &= \displaystyle{-}\sum_{i=1}^{X(t-)}\mathcal{N}_{1,i}(\diff t) +W(t{-})\mathcal{N}_{\lambda}(\diff t)-\sum_{i=1}^{X(t-)}\mathcal{N}_{\beta,i}(\diff t),\\
\quad \diff Z(t) &= \displaystyle -\sum_{i=1}^{Z(t-)}\mathcal{N}_{\gamma,i}(\diff t)+ B_1\mathcal{N}_{\lambda}(\diff t)+B_2\sum_{i=1}^{X(t-)}\mathcal{N}_{\beta,i}(\diff t),\\
\quad \diff \Omega_a(t)&=\displaystyle {-}\alpha\Omega_a(t)\diff t{+}n_{a,0}(Z(t))\diff t\\&\hspace{0cm}{+}\displaystyle
   n_{a,1}(Z(t{-}))\mathcal{N}_{\lambda}(\diff t){+}n_{a,2}(Z(t{-}))\sum_{i=1}^{X(t-)}\mathcal{N}_{\beta,i}(\diff t),\quad a{\in}\{p,d\},\\
\quad \diff W(t) &=\displaystyle -\sum_{i=1}^{W(t-)}\mathcal{N}_{\delta,i}(\diff t)\\
&\hspace{0cm}{+}A_p{\cal N}_{I,\Omega_{p}}(\diff t)-A_d\ind{W(t-){\ge}A_d}{\cal N}_{I,\Omega_{d}}(\diff t),
\end{cases}
\end{equation}
where $\beta,\gamma,\delta$ are non-negative real numbers, $B_1$, $B_2{\in}\N^{\ell}$ and, for $a{\in}\{p,d\}$, $A_a{\in}\N$. 
The functions $n_{a,i}$ are assumed to be bounded  by $C_n$.
\end{definition}
For $a{\in}\{p,d\}$, the function $I$ of ${\cal N}_{I,\Omega_{a}}$, defined in Section~\ref{secsec:interacting}, is the identity function $I(x){=}x$, $x{\in}\R$.  For $\xi{>}0$, ${\cal N}_\xi$, resp. $({\cal N}_{\xi,i})$, is a Poisson process on $\R_+$ with rate $\xi$, resp. an i.i.d. sequence of such point processes. All Poisson processes are assumed to be independent.
\begin{definition}\label{FVD}
For a fixed $w$, the process of the fast variables $(X^w(t),Z^w(t))$ on $\N{\times}\N^{\ell}$ of the SDEs is the Markov process whose transition rates are given by, for $(x,z){\in}\N{\times}\N^{\ell}$,
\[
(x,z)\longrightarrow
\begin{cases}
\hspace{1mm}(x{+}w,z{+}B_1) & \lambda, \\
\hspace{1mm}(x{-}1,z)   & x,
\end{cases}
\hspace{2cm}
\longrightarrow
\begin{cases}
\hspace{1mm}(x,z{-}1)   & \gamma z, \\
\hspace{1mm}(x{-}1,z{+}B_2)  & \beta x.
\end{cases}
\]
\end{definition}

\begin{theorem}[Averaging Principle for a Discrete Model] \label{th:StochD}
If the assumptions of Definition~\ref{def:discretemodel} are verified, the family of scaled processes $(W_{\eps}(t))$ associated to Relations~\eqref{eq:discretemodel} is converging in distribution, as $\eps$ goes to $0$, to the \cadlag integer-valued process $(w(t))$ satisfying the ODE
\begin{equation}\label{eqdiscreteSAPD}
\quad \diff w(t) = -\sum_{i=1}^{w(t-)}\mathcal{N}_{\gamma,i}(\diff t) {+}A_p{\cal N}_{I,\omega_{p}}(\diff t){-}A_d\ind{w(t{-}){\ge}A_d}{\cal N}_{I,\omega_{d}}(\diff t),
\end{equation}
and, for $a{\in}\{p,d\}$,
\[
\frac{\diff \omega_a}{\diff t}(t)= {-}\alpha  \omega_a(t){+}\\
    \int_{\N{\times}\N^\ell} \left(\rule{0mm}{3mm} n_{a,0}(z){+}\lambda n_{a,1}(z){+}\beta(x) n_{a,2}(z)\right)\Pi_{w(t)}(\diff x,\diff z),
\]
where $\Pi_{w}$ is the invariant distribution of the Markov process of Definition~\ref{FVD}.
\end{theorem}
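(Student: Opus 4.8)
The plan is to follow the architecture of the proof of Theorem~\ref{theorem:homog}, since the discrete model of Definition~\ref{def:discretemodel} is its integer-valued counterpart: a coupling dominating the synaptic weight, a truncation of the effect of $(W_\eps(t))$ at a level $K$, tightness of the occupation measures of the fast process, an averaging step through the invariant distribution $\Pi_w$, and finally a uniqueness argument to upgrade subsequential limits to convergence in distribution. The one genuinely new feature, which dictates where the effort must go, is that here the slow variable $(W_\eps(t))$ is integer-valued and keeps jumping in the limit: the limiting object $(w(t),\omega_p(t),\omega_d(t))$ is a piecewise-deterministic Markov process whose $w$-component is the pure-jump process driven by the Poisson processes of Relation~\eqref{eqdiscreteSAPD}, while $(\omega_p,\omega_d)$ follow the averaged ODE between the jumps of $w$. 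The limit must therefore be identified through a martingale problem rather than as a deterministic flow.

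First I would establish the ergodicity of the fast process. For fixed $w$, the process $(X^w(t),Z^w(t))$ of Definition~\ref{FVD} is a continuous-time Markov chain on the countable state space $\N{\times}\N^\ell$, so the machinery of Appendix~\ref{section:invariant} simplifies considerably: the function $(x,z){\mapsto}x{+}a\|z\|$ is a Foster--Lyapunov function for small $a{>}0$, exactly as in the proof of Proposition~\ref{InvPropFP}, which yields positive recurrence, while irreducibility gives uniqueness, hence existence and uniqueness of $\Pi_w$. As announced in the abstract, the discrete structure makes $\Pi_w$ essentially explicit: $(X^w(t))$ is a batch-arrival queue with linear removal rate, whose stationary generating function is computable, and the balance of input and output rates gives the identity $(1{+}\beta)\E[X^w_\infty]{=}\lambda w$, which is needed below.

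Next I would transfer the tightness machinery of Section~\ref{OccSec} and Appendix~\ref{sec:aproof}. A decisive simplification is that the functions $n_{a,j}$ are now bounded by $C_n$, so the dynamics of $(\Omega_{\eps,a}(t))$ no longer create unbounded additive functionals of $Z_\eps$; the only unbounded functional that survives is $\bigl(\int_0^t X_\eps(s)\diff s\bigr)$, coming from the post-synaptic spike rate $\beta x$. As in Lemma~\ref{lemma:majX}, through a coupling in the spirit of Proposition~\ref{coupprop} and the truncation of $W_\eps$ at level $K$, I would dominate $(X_\eps(t))$ by a discrete shot-noise process and obtain the moment bounds by the arguments of Appendix~\ref{ShotSec}. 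This yields tightness of the occupation measures $\nu_\eps$ and the vanishing in distribution of the compensated spike integrals, the analogues of Lemmas~\ref{ITight} and~\ref{CVX2}. The averaging step of Lemma~\ref{OccMeasProp} then identifies any limit of $\nu_\eps$ as $\Pi_{w(s)}(\diff x,\diff z)\diff s$, producing the averaged rates that appear in the $\omega_a$-equation.

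The hard part will be the joint identification of the limit. Unlike in Theorem~\ref{theorem:homog}, the jump rates of $(W_\eps(t))$ depend on $(\Omega_{\eps,p}(t),\Omega_{\eps,d}(t))$, which in turn are governed by the occupation measure of the fast process and hence, through $\Pi_w$, by $(W_\eps(t))$ itself; one must therefore establish the tightness of the quadruple $(W_\eps,\Omega_{\eps,p},\Omega_{\eps,d},\nu_\eps)$ simultaneously and close this feedback loop. Writing the martingale problem associated with the generator of the slow jumps of $W_\eps$ together with the averaged drift of $(\Omega_{\eps,a})$, I would pass to the limit along a convergent subsequence and check that any limit point solves the martingale problem of the piecewise-deterministic process defined by~\eqref{eqdiscreteSAPD}. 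To conclude with convergence in distribution rather than mere tightness, I would argue that this limiting martingale problem is well posed: the averaged rate functions $w{\mapsto}\int(n_{a,0}{+}\lambda n_{a,1}{+}\beta x\,n_{a,2})\Pi_w$ are finite and regular, by the explicit moments of $\Pi_w$ found above and in the spirit of Proposition~\ref{LemLipSimp}, so the associated PDMP is unique in law; a localization of $w$ before its first explosion, controlled by the domination, handles the possible blow-up of the jump rates, which grow linearly in $w$.
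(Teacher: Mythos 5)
Your plan is correct in outline and follows the same architecture as the paper's (sketched) proof: dominate $(W_\eps(t))$ by a coupled process, obtain tightness of the occupation measures of the fast variables, identify their limit as $\Pi_{w(s)}(\diff x,\diff z)\diff s$, and conclude via well-posedness of the limiting system. The one place where you genuinely diverge is the a priori control of the synaptic weight: you re-import the truncation-at-$K$ machinery of Section~\ref{OccSec} together with the monotonicity/uniqueness argument of Theorem~\ref{AsymLinProp} to remove the truncation afterwards. The paper instead observes that for the discrete model this entire layer is unnecessary: since the $n_{a,j}$ are bounded by $C_n$ and the dominating process $(\Dom{W}(t))$ of Definition~\ref{def:discretemodelcoupled} is non-decreasing (only the potentiation term $A_p{\cal N}_{I,\Dom{\Omega}}$ is retained), one has $\E[\sup_{t\le T}\Dom{W}_\eps(t)]{=}\E[\Dom{W}_\eps(T)]$ and a Gronwall loop closes directly, $\E[\Dom{W}_\eps(t)]\le D{+}D\int_0^t\E[\Dom{W}_\eps(u)]\diff u$, using $\E[\Dom{X}_\eps(u)]\le\lambda\E[\Dom{W}_\eps(u)]$. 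This yields $\P(\sup_{t\le T}W_{\eps}(t)\le K_0)\ge 1{-}\eta$ in a few lines and shows in passing that there is no blow-up here, so your localization of $w$ before explosion is not needed. Your route is heavier but works; the paper's buys brevity precisely because the boundedness of the $n_{a,j}$, which you correctly flag as the decisive simplification, is exploited at this earlier stage. On the identification step you are, if anything, more explicit than the paper: you rightly stress that the limit of $(W_\eps)$ is a genuine jump process requiring a martingale-problem formulation, whereas the paper compresses this into ``finish as in Section~\ref{secsec:theoremproof}'' and delegates the Lipschitz and moment properties of $w\mapsto\Pi_w$ needed for uniqueness of~\eqref{eqdiscreteSAPD} to Section~7 of the cited reference; your plan to derive them from the explicit first moments of $\Pi_w$ (e.g.\ $(1{+}\beta)\E[X^w_\infty]{=}\lambda w$) is consistent with that.
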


\begin{proof}
Again, we have to show that,  on a fixed finite interval, the  process $(W(t))$ is bounded with high probability.
A coupled process that stochastically bounds from above the discrete process is also defined.

\begin{definition}\label{def:discretemodelcoupled}
The process $(\Dom{X}(t), \Dom{Z}(t), \Dom{\Omega}(t), \Dom{W}(t) )$ satisfies the following SDEs
\begin{equation}
\label{eq:discretemodelcoupled}
\begin{cases}
\quad \diff \Dom{X}(t) &= \displaystyle{-}\sum_{i=1}^{\Dom{X}(t-)}\mathcal{N}_{1,i}(\diff t) +\Dom{W}(t{-})\mathcal{N}_{\lambda}(\diff t)-\sum_{i=1}^{\Dom{X}(t-)}\mathcal{N}_{\beta,i}(\diff t),\\
\quad \diff \Dom{Z}(t) &= \displaystyle -\sum_{i=1}^{\Dom{Z}(t-)}\mathcal{N}_{\gamma,i}(\diff t)+ B_1\mathcal{N}_{\lambda}(\diff t)+B_2\sum_{i=1}^{\Dom{X}(t-)}\mathcal{N}_{\beta,i}(\diff t),\\
\quad \diff \Dom{\Omega}(t)&=\displaystyle {-}\alpha\Dom{\Omega}(t)\diff t{+}C_n\diff t\\&\hspace{0cm}{+}\displaystyle
   C_n\mathcal{N}_{\lambda}(\diff t){+}C_n\sum_{i=1}^{\Dom{X}(t-)}\mathcal{N}_{\beta,i}(\diff t),\\
\quad \diff \Dom{W}(t) &=\displaystyle A_p{\cal N}_{I,\Dom{\Omega}}(\diff t),
\end{cases}
\end{equation}
where  $B_1$, $B_2{\in}\N^{\ell}$ and, for $a{\in}\{p,d\}$, $A_p{\in}\N$.
\end{definition}
It is not difficult to prove that this process is indeed  a coupling that verifies the relation $W(t){\leq}\Dom{W}(t)$, for all $t{\ge}0$ and that the process $(\Dom{W}(t))$ is non-decreasing.

From the SDEs governing the scaled version of the coupled system, we obtain
\begin{align*}
    &\E\left[\Dom{W}_{\eps}(t){-}w_0\right]\leq A_p\E\left[\int_0^t{\cal N}_{I,\Dom{\Omega}_{\eps,p}}\diff s\right]{\le}A_p t \E\left[\sup_{s\leq t}\Dom{\Omega}_{\eps,p}(s)\right]\\
    &\quad\leq A_p t \E\left[\omega_{0}{+}C_n\sup_{s\le t}\int_0^se^{-\alpha (s{-}u)}\left(\diff u + \eps\mathcal{N}_{\lambda/\eps}(\diff u){+}\eps\sum_{i=1}^{\Dom{X}(u-)}\mathcal{N}_{\beta/\eps,i}(\diff u)\right)\right]\\
    &\leq A_p t \left(\omega_{0}{+}\frac{C_n}{\alpha}(1{+}\lambda){+}\E\left[\int_0^t\beta\Dom{X}_{\eps}(u)\right]\diff u\right)\\
    &\leq A_p t \left(\omega_{0}{+}\frac{C_n}{\alpha}(1{+}\lambda){+}\lambda \beta \E\left[\int_0^t\Dom{W}_{\eps}(u)\diff u\right]\right)
\leq D {+}D\int_0^t\E\left[\Dom{W}_{\eps}(u)\right]\diff u,
\end{align*}
for all $t{\le}T$, for some constant $D{\ge}0$.
Gronwall's Lemma gives a uniform bound, with respect to $\eps$, 
\[
\E\left[\sup_{t{\le}T}\Dom{W}_{\eps}(t)\right] =\E\left[\Dom{W}_{\eps}(T)\right])\le(D{+}w_0)e^{D T}.
\]
Using Markov inequality, we have then that, for any $\eta{>}0$, the existence of $K_0$ and $n_0$ such that $n{\ge}n_0$, the inequality
\[
\P\left(\sup_{t{\le}T} \Dom{W}_{\eps_n}(t){\le} K_0\right)\ge 1{-}\eta
\]
holds.
We can then finish the proof in the same way as in Section~\ref{secsec:theoremproof}.
The tightness property of the family of \cadlag processes $(\Dom{W}_{\eps}(t))$, $\eps{\in}(0,1)$ are proved with Aldous' criterion, see Theorem~VI.4.5 of~\citet{jacod_limit_1987}.

We have to prove the uniqueness of the solution of Relation~\eqref{eqdiscreteSAPD} and the convergence in distribution of the scaled process to the process $w(t)$.
For this, we need to have some Lipschitz property on the limiting system, and  finite first moments for the invariant distribution of $(X^w(t),Z^w(t))$.
This is proved in Section~7 of~\citet{robert_mathematical_2020} for the case where $Z$ is a one-dimensional process, the extension to multi-dimensional $Z$ is straightforward.
\end{proof}
\end{document}